\documentclass{article}
\usepackage{amsmath,amssymb,amsthm}
\usepackage[svgnames, table]{xcolor} 
\usepackage[ruled,vlined]{algorithm2e}
\usepackage{setspace}
\usepackage{xr-hyper}
\usepackage[colorlinks=true, linkcolor=blue, citecolor=blue, filecolor=blue, runcolor=blue, urlcolor = blue]{hyperref}

\usepackage{fullpage}
\newtheorem{definition}{Definition}[section]
\newtheorem{theorem}{Theorem}[section]
\newtheorem{proposition}{Proposition}[section]

\newtheorem{corollary}{Corollary}[section]
\newtheorem{remark}{Remark}[section]
\newtheorem{example}{Example}[section]


\usepackage{graphicx,calc}
\DeclareGraphicsExtensions{.png}

\begin{document}

\title{The Virtual Spectrum of Linkoids and Open Curves in 3-space}

\author{Kasturi Barkataki \and Louis H. Kauffman \and Eleni Panagiotou}

\date{\today}

\maketitle

\begin{abstract}
The entanglement of open curves in 3-space appears in many physical systems and affects their material properties and function. A new framework in knot theory was introduced recently, that enables to characterize the complexity of collections of open curves in 3-space using the theory of knotoids and linkoids, which are equivalence classes of diagrams with open arcs. In this paper, new invariants of linkoids are introduced via a surjective map between linkoids and virtual knots. This leads to a new collection of strong invariants of linkoids that are independent of any given virtual closure. This gives rise to a collection of novel measures of entanglement of open curves in 3-space, which are continuous functions of the curve coordinates and tend to their corresponding classical invariants when the endpoints of the curves tend to coincide.
\vspace{2pc}

\noindent{\it Keywords}: {\small linkoids, knotoids, virtual knots, open curves, topological invariants} 
\noindent{\it MSC}: {\small $57M25$ }
\end{abstract}

\section{Introduction}
The entanglement of open curves in 3-space is a common problem of interest in many physical systems, such as polymers, textiles and crystals \cite{Fukuda2023, Liu2018,Edwards1967}.
Even  though the study of topological complexity of simple closed curves in space is studied thoroughly in knot theory, characterizing the complexity of open curves in 3-space has been a long-standing open problem in mathematics. A reason is that, the traditional topological invariance ideas are not applicable in the context of open curves in 3-space. Recent works have introduced a framework upon which knot theory can be extended to study open curves in 3-space without any approximation schemes \cite{Panagiotou2020b, Panagiotou2021, Barkataki2022}. In this paper, new measures of entanglement of collections of open curves in 3-space
are introduced via a map between linkoids and virtual knots/links.

A knot (or link)  consists in a collection of closed curve(s) embedded in three dimensional space. Knots
and links are classified, with respect to their complexity, by topological invariants, usually of the form of integer valued functions or polynomials with integer coefficients \cite{Jones1985,Freyd1985,Jones1987,Kauffman1987,Kauffman1990,Przytycki1987}.  A topological invariant is a function on the space of knots or links that is invariant under continuous deformations of the embeddings that do not allow self intersections. 

 Knotoids/linkoids  are open ended knot diagrams which can be classified under a diagrammatic notion of equivalence similar to that of knots, but with some restrictions regarding the isotopy moves on the endpoints of the diagram \cite{Turaev2012,Gugumcu2017}. As mentioned in \cite{Turaev2012}, the study of knotoids is closely related to that of virtual knots. A virtual knot diagram is one with some of the crossings being of a different nature, called virtual and can be classified using a diagrammatic theory which is very similar to the handling of classical knot/link diagrams \cite{Kauffmann1999}. There exist many invariants of knotoids, several of them via virtual knots, such as height, odd writhe , the Jones polynomial, the parity bracket polynomial, the affine index polynomial and the arrow polynomial. Even though it is often assumed that invariants of linkoids would follow directly from those of knotoids, as is the case for knots and links, it has been substantially harder to define invariants of linkoids.
 
 In this paper, we give a general connection between knotoids/linkoids and virtual links. This leads to new invariants of linkoids via virtual closure, namely - height, genus, odd writhe, Jones polynomial, arrow polynomial and affine-index polynomial. By accounting for all different virtual closures, new invariants of linkoids are defined - the virtual spectrum and average spectral invariants, which are stronger and are independent of any given virtual closure.
 
Although open curves in 3-space are not knotted or linked in the topological sense, they can form complex conformations, which are called entangled. Entanglement of open curves can also  be measured via topological/geometrical measures, which are either real numbers or polynomials with real coefficients, that are continuous functions of the curve coordinates and can detect knotting and threading \cite{Barkataki2022, Gauss1877, Panagiotou2020b, Panagiotou2021}. A projection of a collection of curves in 3-space can be a closed or open knot/link diagram.
In \cite{Panagiotou2020b,Barkataki2022} it was shown that a rigorous measure of complexity of a collection of open curves in 3-space is given by taking the average of an invariant of a projection of the curves over all possible directions of projection. 

In this paper, the special construction of the virtual closure of open curves in 3-space is introduced, which leads to the definition of new measures of entanglement of collections of open curves in 3-space. These are continuous functions of the curve coordinates and tend to the classical topological invariant as the endpoints coincide.
 
The paper is organised as follows : Section \ref{sec-basics} gives an exposition into some basic definitions and properties associated with virtual knots. Section \ref{sec-vir-linkoid} introduces the generalized virtual closure of linkoids and makes a rigorous connection between the theory of linkoids and the theory of virtual knots. Section \ref{sec-inv-via-vc} discusses the emergence of new invariants of linkoids via the virtual closure introduced in Section \ref{sec-vir-linkoid}. Section \ref{sec-vir-spec} discusses the virtual spectrum of linkoids and new invariants of linkoids independent of any given virtual closure. Section \ref{sec-open-curves} introduces the virtual closure of open curves in 3-space and  new measures of entanglement of open curves in 3-space and discusses their properties. Section \ref{sec-conc} presents the conclusions of this study.

\section{Basics on Virtual Knots and Links}
\label{sec-basics}

This section gives an overview of the basic definitions pertaining to virtual knots and links and their properties \cite{Kauffmann1999, Gugumcu2017, Carter2000, manturov2012virtual}.

\begin{definition}(\textit{Virtual knot/link} and \textit{virtual knot/link diagram})
A \textit{virtual knot/link diagram} consists of generic closed curves in
$\mathbb{R}^2$ (or $S^2$) such that each crossing is either a classical crossing with over and under arcs, or
a virtual crossing without over or under information. Virtual knot/link diagrams are classified using the generalized Reidemeister moves, which include the classical Reidemeister moves and the virtual Reidemeister moves (See Figure \ref{vr1}) and the forbidden moves (See Figure \ref{vr2}). A \textit{virtual knot/link} is defined as an equivalence class of virtual knot/link diagrams under the generalised Reidemeister moves. In conjuction with the generalized Reidemeister moves, a segment of a diagram, consisting of a sequence of consecutive virtual crossings, can be excised and a
new connection be made between the resulting free ends. If the new connecting segment intersects
the remaining diagram (transversally) then each new intersection is taken to be virtual. Such an
excision and reconnection is called a \textit{detour move}  (See Figure \ref{vr2}).
 \label{virtual_knot/link}
\end{definition}

\begin{figure}[ht!]
    \centering
\includegraphics[scale=0.6]{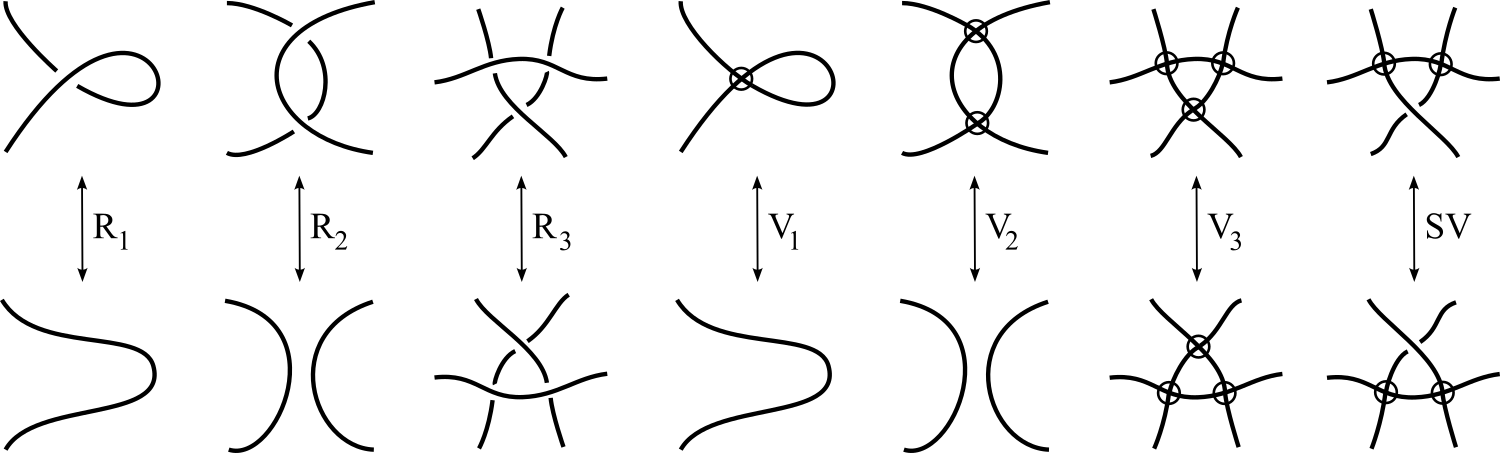}
    \caption{Generalised Reidemeister moves on virtual knots/links : the classical Reidemeister moves, $R_1$, $R_2$,
$R_3$; the virtual moves, $V_1$, $V_2$, $V_3$; and the semi-virtual move $SV$.}
    \label{vr1}
\end{figure}

\begin{figure}[ht!]
    \centering
\includegraphics[scale=0.6]{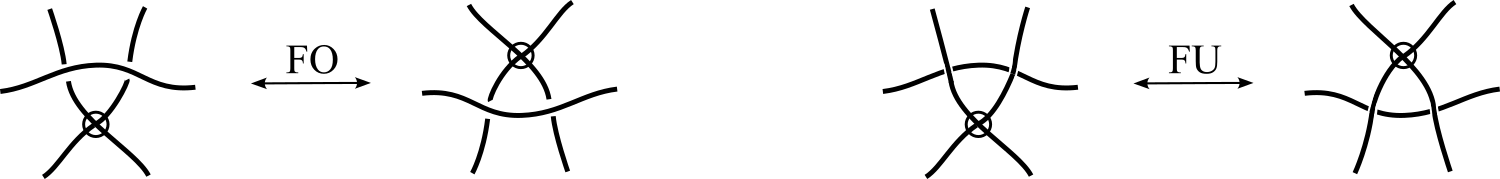} \hspace{1.5cm} \raisebox{-5pt}{\includegraphics[scale=0.18]{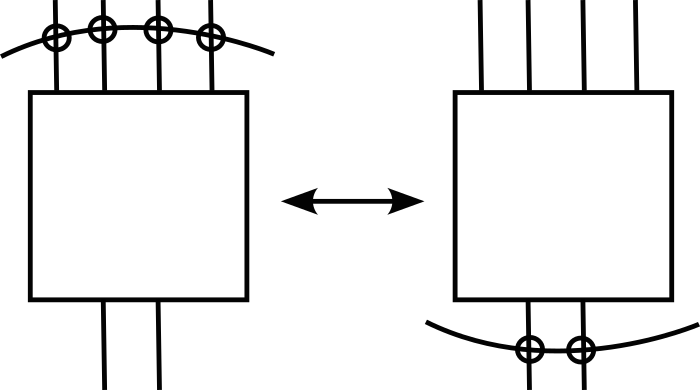}}
    \caption{Forbidden moves on virtual knots/links : the move involving an over-strand, FO and the move involving an under-strand, FU; and the detour move of an arc in a virtual knot diagram.}
    \label{vr2}
\end{figure}

Given a virtual knot/link diagram, $VL_g$, with $g$ virtual crossings, there is a canonical way to associate it with a knot/link in a thickened surface : Each virtual crossing in the diagram can be considered as a shorthand for a detour of one of the arcs involved in the crossing through a 1-handle that has been attached to the 2-sphere of the original diagram. The two choices (above or below) for the 1-handle detour are homeomorphic to each other (as abstract manifolds with boundary). This gives an embedding (the canonical embedding) of a collection of circles into the sphere with $g$ handles. Any other embedding, which is stably equivalent to the canonical embedding, is considered to be a valid embedding of $VL_g$. In fact, two virtual knot/link diagrams are equivalent if and only if their corresponding surface embeddings are stably equivalent \cite{Kauffmann1999,Carter2000}. 

\begin{figure}[ht!]
    \centering
     \includegraphics[scale=0.4]{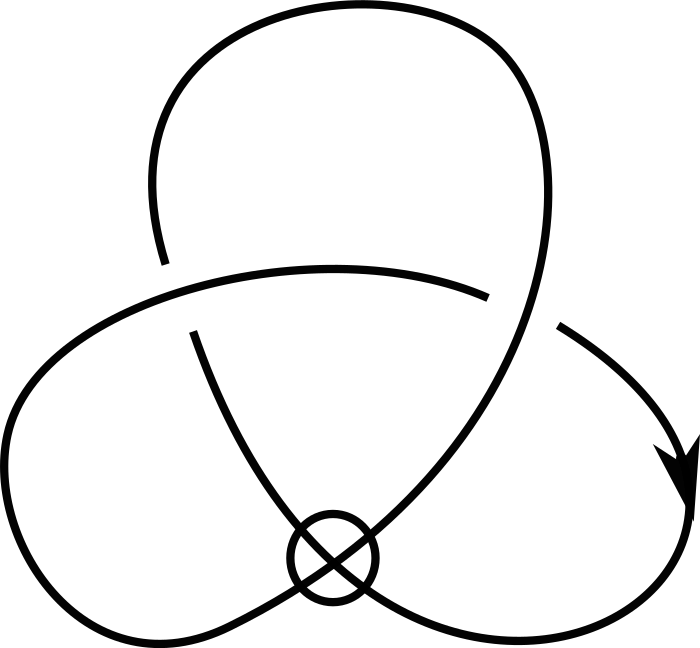} \hspace{1cm} \includegraphics[scale=0.4]{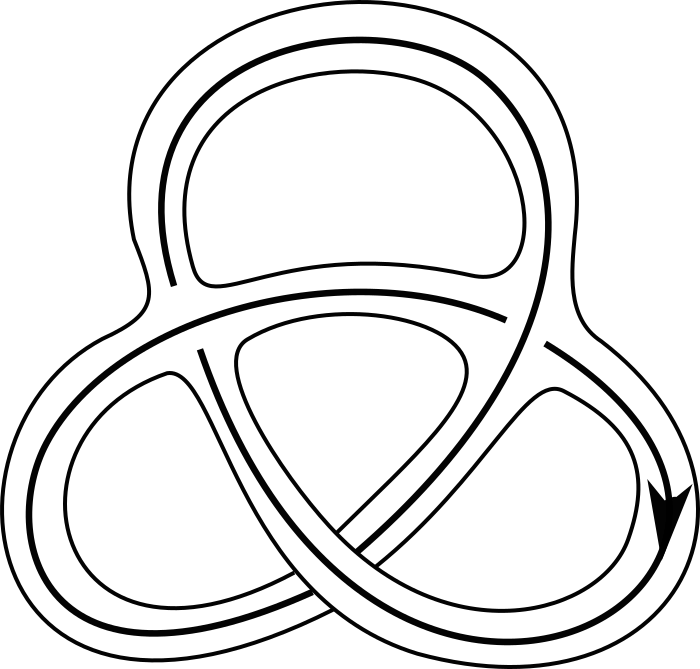} \hspace{1cm}
      \includegraphics[scale=0.4]{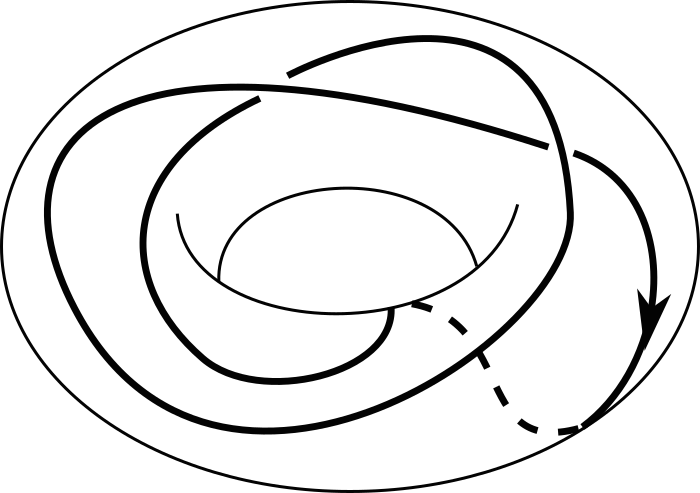}
    \caption{(Left) A diagram of a virtual trefoil knot, (Center) Its ribbon neighborhood representation, (Right) Its surface representation.}
    \label{ribbon1}
\end{figure}
\begin{remark}
A stably equivalent way to realize virtual knots is to form a ribbon neighborhood surface/abstract link diagram  for the given virtual
knot or link diagram \cite{Kauffman2006virtual}.  In such a diagram, the classical crossings are represented as diagrammatic crossings in disks, which are connected by ribbons, while the virtual crossings are
represented by crossings between two ribbons that pass over/under one another without interacting. The ribbon neighborhood diagram is independent of the choice of over or under information for the ribbon strips. By gluing discs to the boundary circles of a ribbon neighborhood diagram of a virtual knot, we obtain a unique surface representation of the virtual knot (See Figure \ref{ribbon1}). 
\label{ribbon-remark}
\end{remark}  
The virtual crossing number of a virtual knot is defined as follows \cite{manturov2012virtual,satoh2012crossing}:

\begin{definition}(\textit{Virtual crossing number}, and \textit{Minimal virtual knot/link diagram})
The  \textit{virtual crossing number}, $\mathsf{cr}_v(VL)$ of a virtual knot/link $VL$ is the minimum of the number of virtual crossings over all diagrams of $VL$. A diagram corresponding to a virtual knot/link $VL$ is said to be \textit{minimal} if the number of virtual crossings in the diagram  is equal to $\mathsf{cr}_v(VL)$. The minimal diagram of a virtual knot/link is unique upto classical Reidemeister moves.
\label{vir}
\end{definition}

A virtual knot/link can also be studied as a knot in a thickened surface through its \textit{minimal representation} \cite{Kuperberg2003, dye2009virtual}, defined as follows:

\begin{definition}(\textit{Genus of a virtual knot/link} and \textit{Minimal representation of a virtual knot/link})
 The \textit{genus}, $g$, of a virtual knot/link is defined as the genus of its minimal embedding surface. An embedding of a virtual knot/link is said to be its \textit{minimal representation}
if it is an embedding in an abstract surface, $S$, whose genus is equal to the genus of the given virtual knot. The minimal representation of a virtual knot/link is unique up to homeomorphism of the surface $S$. 
\end{definition}

\begin{remark}
Let $D(V)$ be a diagram of a virtual knot, $V$ and $a(D(V))$ be its ribbon neighborhood diagram. The surface representation of $V$, obtained by gluing discs to the two boundary circles in $a(D(V))$, realizes the unique (up to homeomorphism) minimal embedding surface of $V$.
\end{remark}

\section{Linkoids and Virtual Links}
\label{sec-vir-linkoid}

In this section an analogy is drawn between virtual knots/links and \textit{virtual closures} of knotoids/linkoids. Linkoids have been typically studied as diagrammatic objects and they can be thought of as projections of open curves in 3-space \cite{Barkataki2022,Gugumcu2017,Turaev2012,Gugumcu2017b,Gugumcu2021parity, Manouras2021}. Throughout this section, the symbol $\Sigma$ is used to denote a surface on which a linkoid diagram lies. The results can be generalized for any $\Sigma$ but in this manuscript, it is assumed that  $\Sigma=S^2 = \mathbb{R}^2 \cup \infty$.

\begin{definition}\label{linkoid}(\textit{Knotoid/linkoid diagram} and \textit{Knotoid/linkoid})
 A \textit{knotoid/linkoid diagram} $L$ with $n \in \mathbb{N}$ components in $\Sigma$ is a generic immersion of $\bigsqcup_{i=1}^{n}[0,1]$ in the interior of $\Sigma$ whose only singularities are transversal double points endowed with over/undercrossing data. These double points are called the crossings of $L$. The immersion of each $[0,1]$ is referred to as a component of the knotoid/linkoid diagram and the images of $0$ and $1$ under this immersion are called the foot and the head of the component, respectively. These two points are distinct from each other and from the double points; they are called the endpoints of the component. The diagram $L$ has a total of $2n$ endpoints. A \textit{knotoid/linkoid} is an equivalence class of knotoid/linkoid diagrams up to the equivalence relation induced by the three Reidemeister moves and isotopy. It is forbidden to pull the strand adjacent to an endpoint over/under a transversal strand  (See Figure \ref{linkoid-moves}).
\end{definition}
\begin{figure}[ht!]
    \centering
    \includegraphics[scale=0.6]{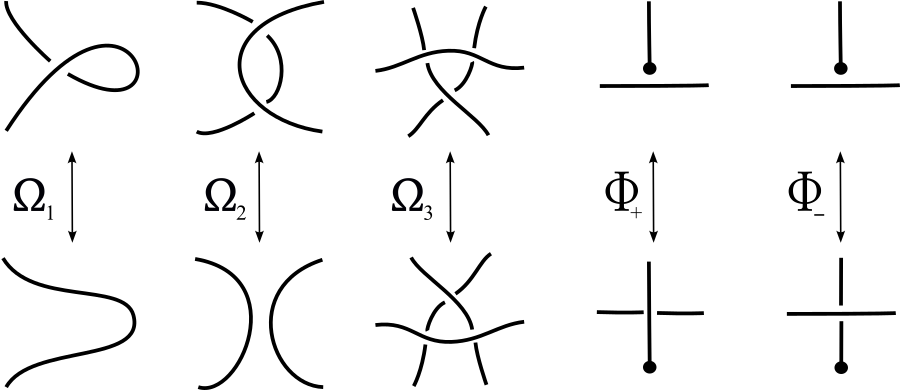}
    \caption{Omega moves (Reidemeister moves) and forbidden moves ($\Phi_+, \Phi_-$) on linkoid diagrams.}
    \label{linkoid-moves}
\end{figure}

Given a linkoid $L$ with $n$ components, a choice of labelling can be made for the $2n$ endpoints with numbers from the set $\{1, 2, \cdots, 2n\}$, without repetition. The initial connectivity among endpoints of $L$ allows us to define a \textit{strand permutation} as follows :

 \begin{definition}\label{def-link-perm}(\textit{Strand Permutation})
Let the endpoints of a linkoid diagram, $L$,  with $n$ components, be denoted by labels from the set $E=\{1, 2, \cdots, 2n\}$, without repetition. The \textit{strand permutation} of $L$ is defined to be the element, $\tau \in S_{2n}$, such that, for any $i\in E$, i.e. the head/foot of a component, $\tau(i)\in E$ is the corresponding foot/head and $\tau(i)\neq i$. It follows that $\tau$ is of order $2$. In other words, $i$ and $\tau(i)$ are labels for the two endpoints of an open component (strand) in the linkoid.
\end{definition}

\begin{figure}[ht!]
    \centering
    \includegraphics[scale=0.5]{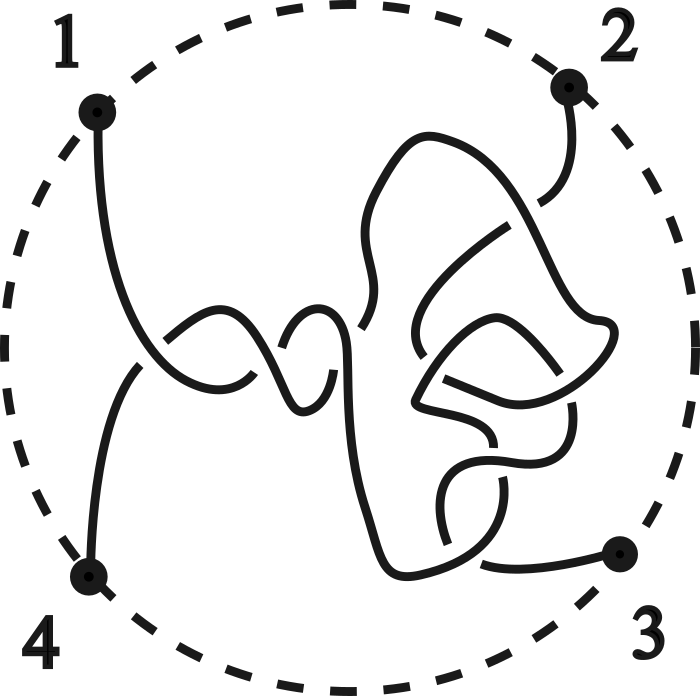}\hspace{50pt}\includegraphics[scale=0.5]{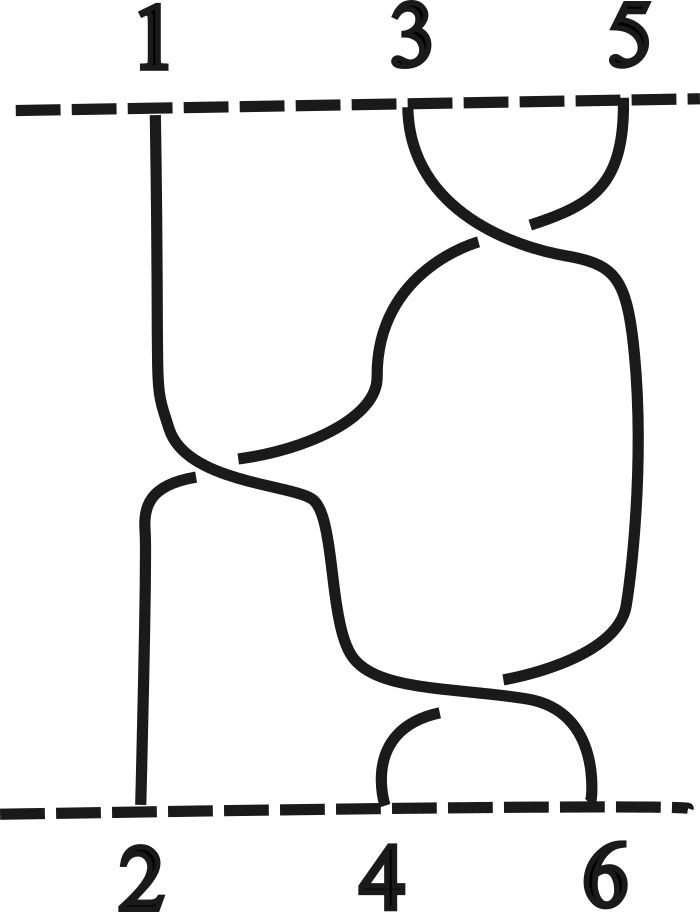}\hspace{40pt} \raisebox{0.1\totalheight}{\includegraphics[scale=0.25]{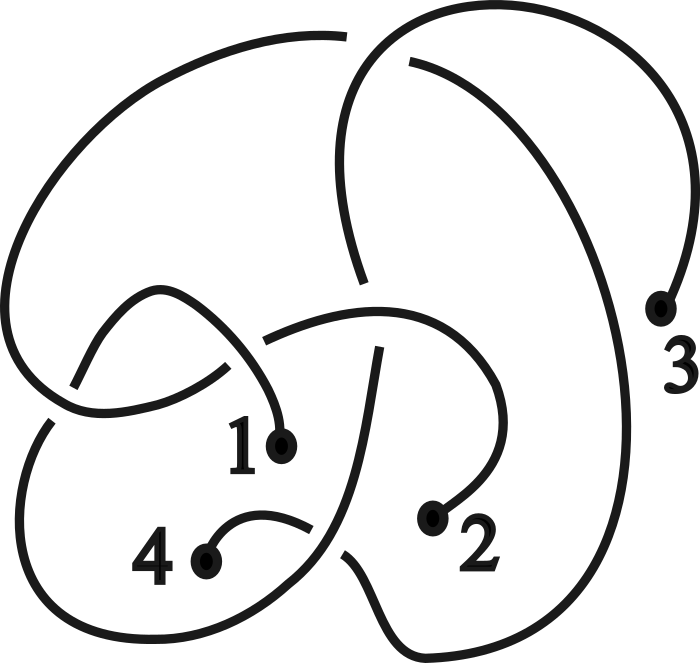}}\\\vspace{20pt}
    \includegraphics[scale=0.5]{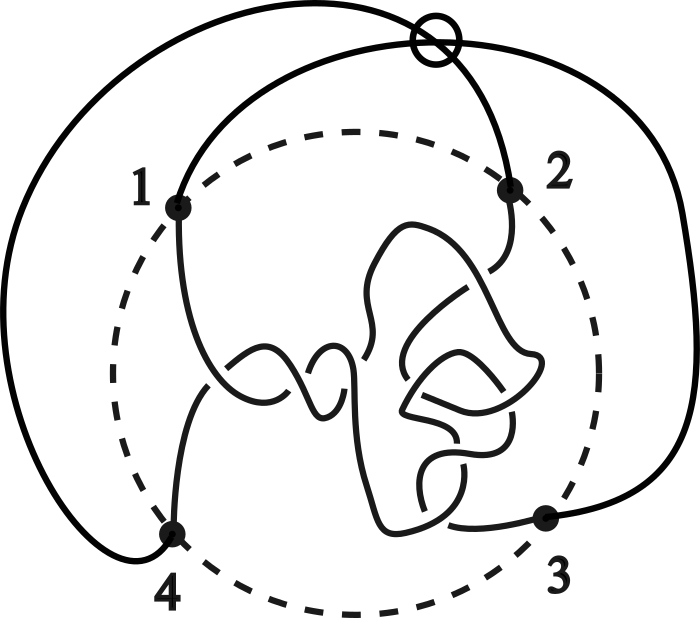}\hspace{25pt}\includegraphics[scale=0.5]{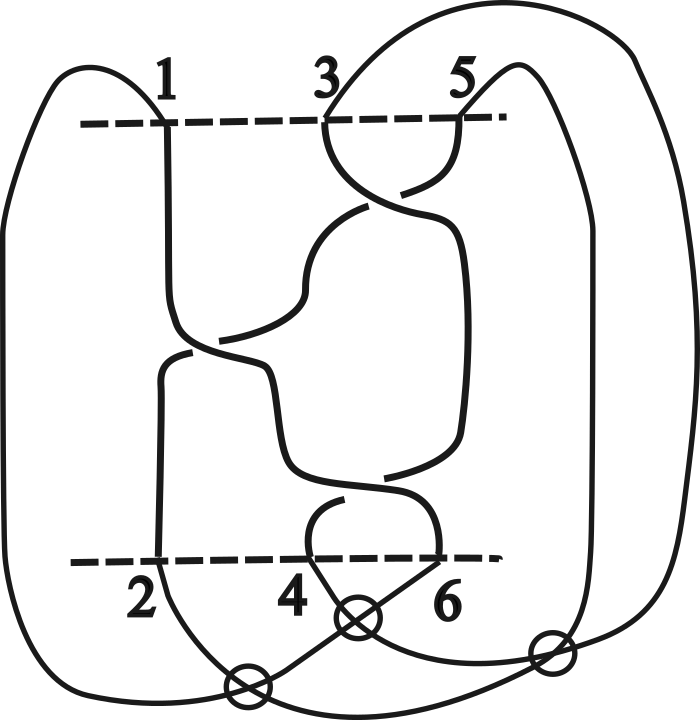}\hspace{20pt} \raisebox{0.1\totalheight}{\includegraphics[scale=0.25]{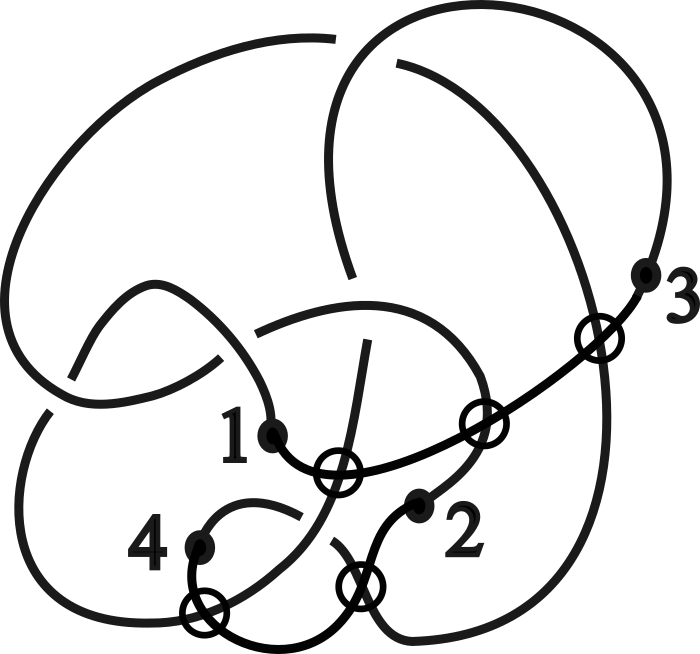}}
    \caption{(Above) The labelled endpoints of any linkoid determine a strand permutation. Examples of a tangle, a braid and a general linkoid are shown above with strand permutations $(1 \quad 3)(2 \quad 4)$, $(1 \quad 6)(3 \quad 4)(2 \quad 5)$ and $(1 \quad 3)(2 \quad 4)$, respectively. (Below) The virtual closure of the tangle, the braid and the linkoid according to their respective strand permutations.}
    \label{fig-linkoid-perm-1}
\end{figure}

By definition, the strand permutation for a linkoid with $n$ components, is a product of $n$ disjoint transpositions (See Figure \ref{fig-linkoid-perm-1}). 
In the following, given a permutation $\tau$ in $E=\{1, 2, \cdots, 2n\}$, the notation $L_\tau$ is used to represent a labelled linkoid with $n$ components with the head-foot labelling implied by its strand permutation, $\tau$.

\subsection{Generalized Virtual Closure of Linkoids}
In this section, the relationship between multi-component linkoids and virtual links is examined for the first time and a general framework for studying the map between linkoids and virtual knots/links is introduced. The relationship between knotoids (single component) and virtual knots has been explored before \cite{Turaev2012, Gugumcu2017}. It was shown that, by introducing a closure arc to a knotoid diagram, which only creates virtual crossings, one can obtain a virtual knot diagram. The extension of the approach presented in \cite{Turaev2012, Gugumcu2017} to the case of linkoids with more than 1 component, is substantially more complex and it is studied for the first time in this manuscript.

Indeed, notice that, there are multiple ways to close a linkoid of more than 2 components and it is possible to obtain non-equivalent virtual knots/links, for the same linkoid, based on the choice of closure of the endpoints. To make this precise, the following definition of a \textit{closure permutation} will be useful :

\begin{definition}(\textit{Closure Permutation}) Let the endpoints of a linkoid diagram, $L$,  with $n$ components, be denoted by labels from the set $E=\{1, 2, \cdots, 2n\}$, without repetition. A \textit{closure permutation} of $L_{\tau}$, where $\tau$ denotes the strand permutation of $L$, is any element,  $\sigma \in S_{2n}$,  such that, $\sigma^2 = \operatorname{id}$ and $\sigma(i) \neq i$, for all $i \in E$. 
\label{def_EP}
\end{definition}

\begin{remark}
 Definitions \ref{def-link-perm} and \ref{def_EP} imply the following : (i)  The strand permutation is a special case of a closure permutation. (ii) Any closure permutation can be expressed as the product of $n$ disjoint transpositions. (iii) Strand permutation (resp. closure permutation) of a linkoid is synonymous with the term head-foot pairing (resp. pairing combination) introduced in \cite{Barkataki2022}.
\end{remark}

\begin{figure}[ht!]
    \centering
    \includegraphics[scale=0.55]{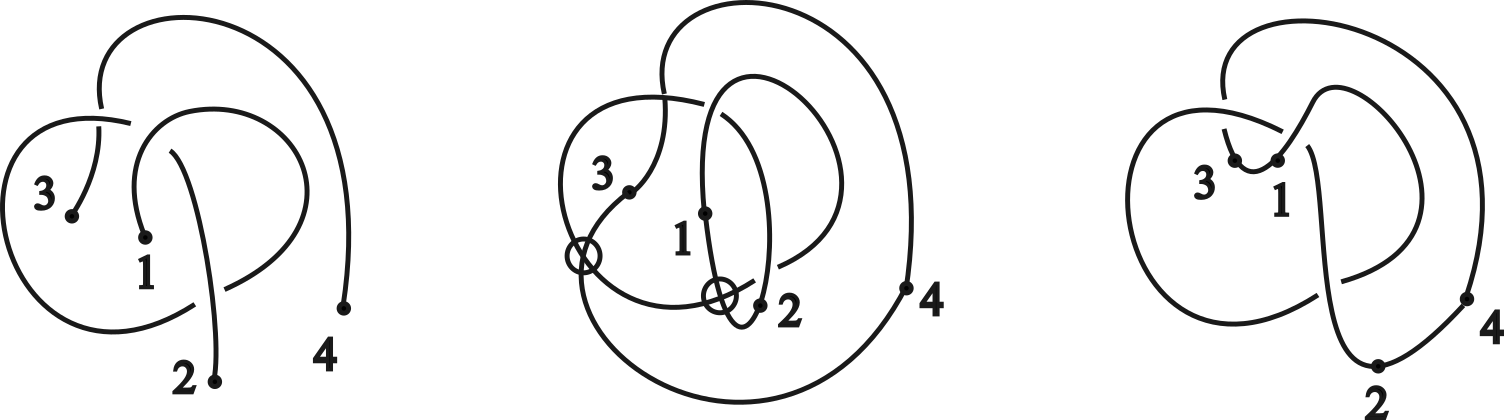}
    \caption{The virtual closure of a linkoid can give rise to non-equivalent virtual knots depending on the choice of the closure permutation, as seen in the above example. (Left) An example of  a linkoid, $L$, with $\tau = (1 \quad 2)(3 \quad 4)$. (Center) The virtual closure, $(L_\tau, \tau)$ of $L$, with respect to $\tau$, gives rise to a virtual link. It is in fact the \textit{strand closure} of $L$. (Right) Whereas, the virtual closure, $(L_\tau, \sigma)$  with respect to another closure permutation, $\sigma = (1 \quad 3)(4 \quad 2)$, gives rise to a trefoil.}
    \label{closures1}
\end{figure}

Any virtual closure of a linkoid diagram, with respect to a given closure permutation, can be defined as follows:

\begin{definition}(\textit{Virtual Closure of a linkoid diagram with respect to $\sigma$} and \textit{Strand Closure})
Let $L_{\tau}$ be a linkoid diagram  with $n$ components; $\tau$ be its strand permutation and $\sigma \in S_{2n}$ be a closure permutation of $L_\tau$. The \textit{virtual closure} of $L_\tau$ with respect to  $\sigma$, denoted $(L_\tau,\sigma)_v$, is defined to be the virtual knot/link obtained by introducing a closure arc between $i$ and $\sigma(i)$, for every $i \in \{1, 2, \cdots, 2n\}$, consisting entirely of virtual crossings. The special case of the virtual closure  with $\sigma = \tau$ will be called the \textit{strand closure} of $L$. (See Figure \ref{closures1}). 
 \label{closure}
\end{definition}

Depending on the number of open-ended arcs, the strand permutation and the choice of closure permutation, the number of components of a virtual closure of a linkoid can be determined by its associated linkoid and closure permutations, by generalizing the notion of \textit{segement cycle} (introduced in \cite{Barkataki2022}), as follows:

\begin{definition}(\textit{Segment cycle of a linkoid with respect to a closure permutation})
Let $L_\tau$ be a linkoid with $n$ components, with strand permutation $\tau$ and a closure permutation $\sigma$. Let $E= \{ 1, 2, \cdots, 2n\}$ denote the set of the labelled endpoints of $L_\tau$. A \textit{segment cycle} of $L_\tau$ with respect to the closure permutation $\sigma$, is defined to be an orbit of a point in $E$, under the action of the group $\langle \tau, \sigma \rangle$ on $E$.
\label{def-seg-linkoid}
\end{definition}

\begin{figure}[ht!]
    \centering
    \includegraphics[scale=0.35]{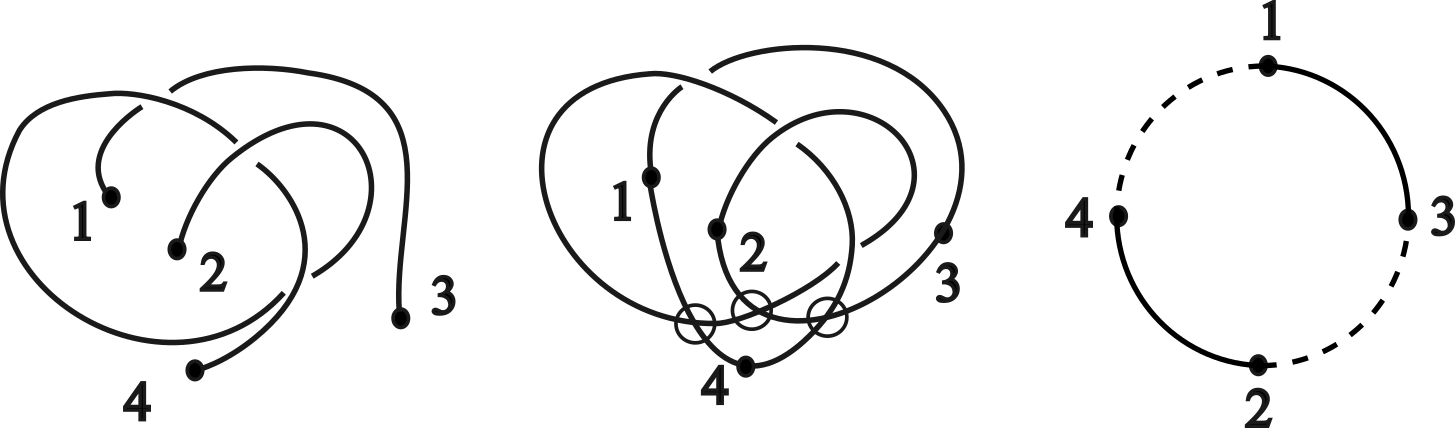}
    \caption{(Left) A linkoid, $L$, with 2 components and strand permutation, $\tau = (1 \quad 3)(2\quad 4)$; (Center) The virtual closure, $(L_\tau,\sigma)_v$, with respect to $\sigma = (1 \quad 4)(2 \quad 3)$; (Right) the unique segment cycle in $L$, with respect to $\sigma$.}
    \label{linkoid-compo}
\end{figure}

The definition of a segment cycle  and the defining properties of a group guarantee that the set of segment cycles of (points in) $E$ under the action of 
$\langle \tau, \sigma \rangle$ form a partition of $E$. The set of all segment cycles of the linkoid under the action of $\langle \tau, \sigma \rangle$ is written as the  quotient of the action, i.e, $\displaystyle E/\langle \tau, \sigma \rangle$. The number of components in a closure of a linkoid can be detremined by the segment cycles as follows :

\begin{proposition}\label{prop-seg-cycles}
Let $L_\tau$ be a linkoid with $n$ components, with strand permutation $\tau$ and a closure permutation $\sigma$. Let $E= \{ 1, 2, \cdots, 2n\}$ denote the set of the labelled endpoints of $L_\tau$. Let $G=\langle \tau, \sigma \mid \tau^2=\sigma^2=1\rangle$. The number of distinct segment cycles, $\displaystyle |E/G|$ of $L_\tau$, with respect to $\sigma$ is given as 
$$\displaystyle |E/G| = \frac{1}{2m}\sum_{g \in G} |E^g|$$
\noindent where $m=|\tau \sigma|$ is the order of $\tau \sigma \in G$ and $E^g$ denotes the set of elements in $E$ that are fixed by $g \in G$ i.e. $\displaystyle E^g = \{ x \in E \quad | \quad g.x = x \}$.
\end{proposition}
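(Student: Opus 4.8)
The plan is to recognize the statement as an instance of the orbit‑counting theorem (Burnside's lemma) applied to the action of $G$ on $E$, combined with a determination of the order $|G|$. First I would note that, by Definition \ref{def-seg-linkoid}, the segment cycles of $L_\tau$ with respect to $\sigma$ are precisely the orbits of the action of the (finite) group $G=\langle \tau,\sigma\rangle\le S_{2n}$ on $E$, so that their number is exactly $|E/G|$. The orbit‑counting theorem then immediately gives
$$|E/G| = \frac{1}{|G|}\sum_{g\in G}|E^g|,$$
with $E^g$ the fixed‑point set of $g$ as required, and it remains only to show that $|G| = 2m$, where $m$ is the order of $r := \tau\sigma$.

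To compute $|G|$ I would exploit that $\tau$ and $\sigma$ are involutions, so that $G$ is dihedral. Writing $N=\langle r\rangle$, the relation $\tau r \tau^{-1} = \tau(\tau\sigma)\tau = \sigma\tau = (\tau\sigma)^{-1} = r^{-1}$ shows that $\tau$ normalizes $N$; since $r$ normalizes $N$ trivially, we obtain $N \trianglelefteq G$. Because $G=\langle N,\tau\rangle$ and $\tau^2=1\in N$, the index $[G:N]$ is at most $2$, so $|G|\in\{m,2m\}$. Thus the whole computation reduces to pinning down this index.

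The one point that needs care — and the only genuine obstacle — is ruling out the degenerate possibility $[G:N]=1$, i.e.\ $\tau\in N$. If $\tau\in N$ then $G=N$ is cyclic, hence abelian, forcing $r=\tau\sigma=\sigma\tau=(\tau\sigma)^{-1}=r^{-1}$ and therefore $m\le 2$. I would then dispose of the two surviving cases by hand: when $m=1$ we have $r=1$ and $N=\{1\}$, so $\tau\in N$ would give $\tau=1$; when $m=2$ we have $\tau\in N=\{1,r\}$ with $\tau\neq1$, hence $\tau=r=\tau\sigma$ and so $\sigma=1$. Either conclusion contradicts the standing hypothesis that $\tau$ and $\sigma$ are nontrivial involutions (each a product of $n\ge 1$ disjoint transpositions). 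Hence $[G:N]=2$ and $|G|=2m$. Substituting $|G|=2m$ into the orbit‑counting identity yields the claimed formula $|E/G|=\tfrac{1}{2m}\sum_{g\in G}|E^g|$.
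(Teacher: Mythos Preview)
Your proposal is correct and follows essentially the same approach as the paper: identify the segment cycles as $G$-orbits, invoke Burnside's lemma, and compute $|G|=2m$ by recognizing that the group generated by two involutions is dihedral of order $2m$ where $m=|\tau\sigma|$. Your argument is in fact more careful than the paper's, which simply asserts $G\cong D_m$ via a presentation, whereas you explicitly rule out the degenerate case $\tau\in\langle\tau\sigma\rangle$ using the nontriviality of $\tau$ and $\sigma$.
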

\begin{proof}
$G$ can alternatively be expressed as, $\langle \tau \sigma, \sigma \mid (\tau\sigma)^m=\sigma^2= (\sigma \tau \sigma)^2= 1\rangle$. Thus, $G$ is isomorphic to the dihedral group $D_{m}$. By Definition \ref{def-seg-linkoid}, segment cycles of $E$ form a partition of $E$ under the action of $G$. Therefore, by Burnside's counting theorem, the number of distinct segment cycles of $L_\tau$ with respect to the closure permutation, $\sigma$ is given as,
\begin{equation}
   \displaystyle |E/G| = \frac{1}{|G|}\sum_{g \in G} |E^g|  = \frac{1}{2m}\sum_{g \in \langle \tau, \sigma \rangle} |E^g|.
\end{equation}
\end{proof}

\begin{corollary}\label{corr-seg-virconn}
Let $L_\tau$ be a linkoid with strand permutation $\tau$ and a closure permutation $\sigma$. The number of components in the virtual closure $(L_\tau,\sigma)_v$ is equal to the number of segment cycles of $L_\tau$ with respect to $\sigma$.
\end{corollary}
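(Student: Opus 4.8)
The plan is to exhibit an explicit bijection between the connected components of the virtual link $(L_\tau,\sigma)_v$ and the orbits of the group $G=\langle \tau,\sigma\rangle$ acting on the endpoint set $E$, since by Definition \ref{def-seg-linkoid} these orbits are precisely the segment cycles of $L_\tau$ with respect to $\sigma$. Thus the corollary reduces to the statement that tracing the components of the closure computes exactly the partition of $E$ into $G$-orbits.

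First I would describe the combinatorial structure of the closure. In $(L_\tau,\sigma)_v$ every one of the $2n$ endpoints of $L$ becomes an ordinary (smooth) point of a closed curve: on one side it is incident to the strand arc of $L$ joining $i$ to $\tau(i)$, and on the other side to the closure arc joining $i$ to $\sigma(i)$. Consequently, reading the labels encountered while traversing a component in a fixed direction, one \emph{alternately} follows a strand arc (applying $\tau$) and a closure arc (applying $\sigma$). Hence the cyclic sequence of endpoints on the component through a label $x$ is
\[
\cdots,\ \sigma\tau(x),\ \tau(x),\ x,\ \sigma(x),\ \tau\sigma(x),\ \cdots,
\]
so that the set of endpoints appearing on this component is $\{\,x,\ \tau(x),\ \sigma(x),\ \tau\sigma(x),\ \sigma\tau(x),\,\dots\,\}$.

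Next I would identify this endpoint set with the orbit $G\cdot x$. One containment is immediate, since every label reached by alternately applying $\tau$ and $\sigma$ lies in $G\cdot x$. For the reverse containment I would invoke the fact that $\tau$ and $\sigma$ are involutions (Definitions \ref{def-link-perm} and \ref{def_EP}), so that every element of $G=\langle \tau,\sigma\mid \tau^2=\sigma^2=1\rangle$ can be rewritten as an alternating word in $\tau$ and $\sigma$; therefore every element of $G\cdot x$ is obtained from $x$ by such an alternating word and hence appears in the traversal above. This shows the endpoint set of the component through $x$ equals $G\cdot x$ exactly. Since the orbits partition $E$ and each endpoint lies on exactly one component of $(L_\tau,\sigma)_v$, assigning to each component the common orbit of its endpoints is well-defined and injective, and it is surjective because any orbit $G\cdot x$ is realized by the component carrying the label $x$. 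Thus the number of components equals $|E/G|$.

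The main obstacle I anticipate is precisely the reverse containment: verifying that a single traversal of one component sweeps out the \emph{whole} orbit rather than a proper subset of it. The resolution hinges on two facts working together — the alternation is \emph{forced} (each endpoint joins exactly one strand arc to exactly one closure arc), and the involution relations $\tau^2=\sigma^2=1$ collapse every word in $G$ to an alternating one. Once these are established the bijection is routine bookkeeping, and the statement follows.
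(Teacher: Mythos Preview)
Your proposal is correct and follows essentially the same idea as the paper's proof: both identify the set of endpoints lying on a given component of $(L_\tau,\sigma)_v$ with a $\langle\tau,\sigma\rangle$-orbit in $E$, thereby producing the desired bijection. The paper's argument is terser---it simply observes that two endpoints lie on the same closure component iff they are connected by some sequence of strand and closure arcs, hence iff they lie in the same orbit---whereas you make the mechanism explicit by spelling out the alternating traversal and invoking the involution relations $\tau^2=\sigma^2=1$ to show every group element reduces to an alternating word; this extra care cleanly addresses the ``reverse containment'' issue that the paper's proof leaves implicit.
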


\begin{proof}
Let $E$ be the set of all labelled endpoints of the linkoid, $L_\tau$. If $i\neq j\in E$ lie in the same component, then $i,j$ are either in the same linkoid component or they belong to different components that are connected via closure, thus they are in the same orbit under the action of the group $\langle \tau, \sigma\rangle$ on $E$, which is the same segment cycle. Similarly, a segment cycle defines a component of the virtual closure, since, if $i,j$ are in different components (different linkoid components and not connected via closure), they are not related by the action of $\langle \tau, \sigma\rangle$ on $E$
(See Figure \ref{linkoid-compo}).
\end{proof}

\begin{remark}
Braids and tangles are special types of linkoids.  Hence, Definitions  \ref{linkoid}-\ref{closure}, Proposition \ref{prop-seg-cycles} and Corollary \ref{corr-seg-virconn} apply naturally to braids and tangles.  Given any braid, $B$, with $n$ components, is it interesting to consider the two very special kinds of braid closures - the standard braid closure and the strand closure. 
As a convention, the endpoints of a braid are numbered, in order, from the top bar to the bottom bar. The standard braid closure is obtained by introducing a closure arc between $i$ and $i+n$, for all $i \in \{1, 2, \ldots, n\}$. On the other hand, the strand closure is obtained by introducing a closure arc between $i$ and $\tau(i)$, for all $i \in \{1, 2, \ldots, 2n\}$, where $\tau$ is the strand permutation associated with the braid (See Figure \ref{two-kinds-BC}). 
\end{remark}
\begin{figure}[ht!]
    \centering
    \includegraphics[scale=0.5]{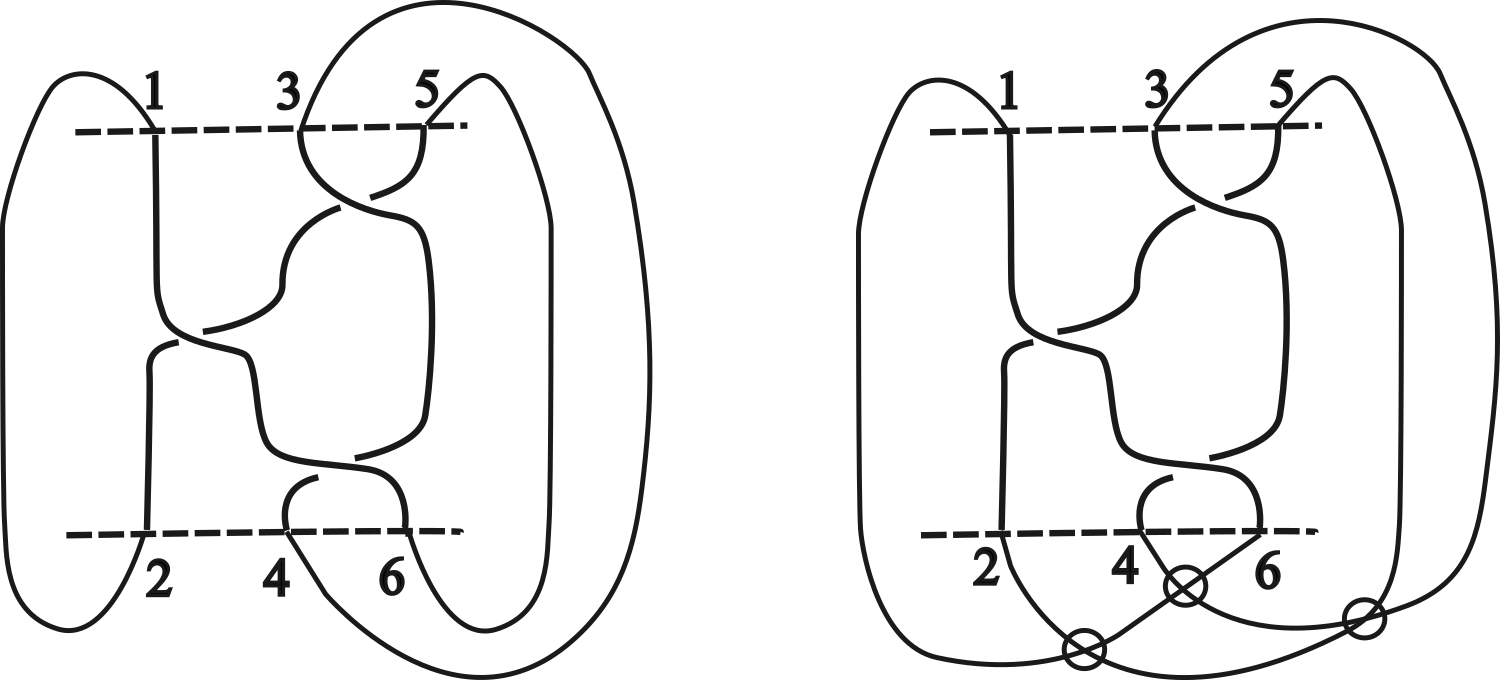}
    \caption{(Left) Standard braid closure and (Right) Strand closure of a braid on 3 strands.}
    \label{two-kinds-BC}
\end{figure}

\begin{proposition}\label{rem-clos1}
 Let $L$ be a linkoid diagram with strand permutation, $\tau$, and a closure permutation, $\sigma$. The virtual closure, $(L_\tau,\sigma)_v$, is independent of the immersion class of the closure arcs.  
 \end{proposition}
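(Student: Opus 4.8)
The plan is to show that any two virtual closures of $L_\tau$ with the same closure permutation $\sigma$, differing only in how the closure arcs are immersed, represent the same virtual knot/link. The single tool needed is the \emph{detour move} of Figure~\ref{vr2}, which belongs to the generalized Reidemeister moves: a segment consisting entirely of consecutive virtual crossings may be excised and reconnected along any new path, with every newly created transversal intersection declared virtual. By Definition~\ref{closure}, each closure arc meets the rest of the diagram (and the other closure arcs) only in virtual crossings, so each such arc is precisely the type of segment the detour move is permitted to act on. Thus the proposition should reduce to a finite sequence of detour moves, one per closure arc.

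First I would fix two diagrams $D$ and $D'$ realizing $(L_\tau,\sigma)_v$ that coincide on the underlying linkoid diagram $L_\tau$ and use the same closure permutation $\sigma$, but whose closure arcs are immersed differently. Label the closure arc joining $i$ to $\sigma(i)$ by $\alpha_i$ in $D$ and by $\alpha_i'$ in $D'$; by construction $\alpha_i$ and $\alpha_i'$ share the same pair of fixed endpoints on $L_\tau$. I would then transform $D$ into $D'$ one closure arc at a time. For a fixed $i$, excise all of $\alpha_i$ except arbitrarily small neighborhoods of its two endpoints --- a legitimate detour segment, since all of its crossings are virtual --- and reconnect the resulting free ends along the path traced by $\alpha_i'$. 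Because the endpoints of $\alpha_i$ and $\alpha_i'$ agree, and because any intersections created along $\alpha_i'$ (with the linkoid strands or with previously placed closure arcs) are taken to be virtual, this is exactly one application of the detour move. Redundant virtual self-crossings or mutual crossings can then be simplified or matched using the virtual Reidemeister moves $V_1$, $V_2$, $V_3$ of Figure~\ref{vr1}. Iterating over $i=1,\dots,n$ carries $D$ to $D'$ through generalized Reidemeister moves, so $D$ and $D'$ are equivalent as virtual knots/links.

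The hard part will be to verify carefully that the detour move can realize \emph{any} rerouting of a closure arc between its two fixed endpoints, rather than merely a planar isotopy of the arc. This is where working on $\Sigma = S^2$ is convenient: two arcs with common endpoints need not be ambient isotopic in the complement of the diagram, but the detour move is strictly stronger than isotopy, since it permits dragging the arc across the remaining diagram at the cost of virtual crossings --- which are exactly the crossings a closure arc is allowed to have. Hence every immersion class of a closure arc is reachable from every other. A secondary bookkeeping issue, relevant only when $n>1$, is the interaction among the several closure arcs: since distinct closure arcs meet only in virtual crossings, the order in which the arcs are rerouted is immaterial, and each reroute is independent of the others up to virtual Reidemeister moves. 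It follows that the immersion class of the closure arcs does not affect $(L_\tau,\sigma)_v$, which establishes the claim.
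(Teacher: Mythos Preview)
Your proof is correct and follows the same approach as the paper's: both argue that any two choices of closure arcs are related by detour moves on those arcs, since every crossing along a closure arc is virtual. The paper's proof is a two-line invocation of the detour move, whereas you have supplied the careful justification (rerouting one arc at a time, independence of order, sufficiency of the detour move beyond planar isotopy) that the paper leaves implicit.
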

 \begin{proof}
    Let $A$ and $B$ be two diagrams, of the virtual closure of the given linkoid, with different immersion classes of the closure arcs. The diagram $B$ can be obtained by detour moves on the closure arcs in $A$.
 \end{proof}

\begin{remark}
    For a linkoid, $L_\tau$, the virtual closure with respect to the strand permutation, $(L_\tau, \tau)$, corresponds to the component-wise closure of $L_\tau$. Thus, the virtual closure of a linkoid corresponding to the strand permutation (i.e. $\sigma = \tau$) has the same number of components as the linkoid.
\end{remark}

\begin{proposition}\label{prop-L-sig-iso}
    Let $L_{\tau}^{1}, L_{\tau}^{2}$ denote two equivalent linkoid diagrams with strand permutation $\tau$ and let $\sigma$ denote a closure permutation. Then their virtual closures with respect to $\sigma$, $(L_{\tau}^{1},\sigma)$ and $(L_{\tau}^{2},\sigma)$ are equivalent.
\begin{proof}
It follows by performing a sequence of Reidemeister moves on the virtual knot/link, $(L_\tau^1, \sigma)$, which transforms $L_\tau^1$ to $L_\tau^2$.
\end{proof}
\end{proposition}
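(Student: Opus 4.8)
The plan is to lift a linkoid-equivalence between $L^1_\tau$ and $L^2_\tau$ to a virtual-equivalence between their closures, one move at a time. By Definition \ref{linkoid}, the hypothesis that $L^1_\tau$ and $L^2_\tau$ are equivalent linkoid diagrams means there is a finite sequence of linkoid moves carrying $L^1_\tau$ to $L^2_\tau$, where each move is either a planar isotopy or one of the three Omega (Reidemeister) moves of Figure \ref{linkoid-moves}, and crucially none is a forbidden move $\Phi_+$ or $\Phi_-$. Since linkoid equivalence preserves the endpoints together with the head--foot pairing, every intermediate diagram has the same strand permutation $\tau$ and the same labelled endpoints, so the closure permutation $\sigma$ continues to prescribe closure arcs between the same pairs $i$ and $\sigma(i)$ throughout the sequence.

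First I would arrange for the closure arcs to stay out of the way of each move. By Definition \ref{closure}, the diagram $(L^1_\tau,\sigma)_v$ is obtained from $L^1_\tau$ by adjoining arcs meeting the rest of the diagram only in virtual crossings and joining $i$ to $\sigma(i)$ for each $i$. Each individual linkoid move is supported in a small disk in the interior of $\Sigma$; invoking Proposition \ref{rem-clos1}, which allows the closure arcs to be rerouted by detour moves without changing the virtual closure, I may assume the closure arcs avoid this disk. Thus each move takes place in a region of $(L^1_\tau,\sigma)_v$ containing only strands of the linkoid and no closure arc.

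Next I would verify that each allowed linkoid move is a legal move on the virtual closure. An Omega move performed in a disk disjoint from the closure arcs is literally a classical Reidemeister move $R_1$, $R_2$ or $R_3$ on the virtual diagram, and these are among the generalized Reidemeister moves of Figure \ref{vr1}; a planar isotopy of the linkoid induces a planar isotopy of the virtual diagram, together with detour moves of the closure arcs where necessary. The only way a linkoid isotopy could fail to lift is if it dragged a strand across an endpoint, i.e. across the attachment point of a closure arc, but this is exactly a move pulling the strand adjacent to an endpoint over or under a transversal strand, which is the forbidden move $\Phi_\pm$ excluded by hypothesis. This is the crux: the forbidden moves $\Phi_\pm$ on linkoids correspond precisely to the forbidden moves FO/FU on the closed-up virtual diagram, so the endpoint restriction built into linkoid equivalence is exactly what guarantees that no step requires an illegal virtual move across a closure arc.

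Finally, concatenating the lifted moves transforms $(L^1_\tau,\sigma)_v$ into $(L^2_\tau,\sigma)_v$ through generalized Reidemeister and detour moves, establishing their equivalence as virtual knots/links. I expect the main obstacle to lie in the third step: making rigorous the assertion that every legal linkoid move lifts and that no lift ever needs a forbidden virtual move. This amounts to pinning down the exact correspondence between the endpoint restriction $\Phi_\pm$ for linkoids and the forbidden moves FO/FU for virtual links in a neighborhood of the closure arcs, which is the only place where the argument is more than a routine local check.
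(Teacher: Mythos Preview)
Your proposal is correct and follows essentially the same approach as the paper: lift the sequence of Reidemeister moves relating $L^1_\tau$ and $L^2_\tau$ to generalized Reidemeister moves on the virtual closure. The paper's proof is a one-line version of this; your explicit use of Proposition~\ref{rem-clos1} to detour the closure arcs away from each move's support, and your discussion of why the forbidden moves never intervene, simply spell out details the paper leaves implicit.
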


 \begin{remark}
 \label{linkoid-thickened-surf}
 Let $L$ be a linkoid diagram with strand permutation, $\tau$, and a closure permutation, $\sigma$ and let 
 $(L_\tau,\sigma)_v$ be the corresponding
 unique virtual knot/link obtained via virtual closure.  $(L_\tau,\sigma)_v$ can be interpreted  in terms of links in thickened surfaces or on ribbon-neighborhoods  (See Section \ref{sec-basics}).
\end{remark}

\begin{definition}(\textit{Minimal virtual closure  } and \textit{Minimal virtual representative of a linkoid diagram with respect $\sigma$}) A virtual closure of a linkoid diagram, $L_\tau$, with respect to a closure permutation, $\sigma$,  is said to be \textit{minimal} if no virtual crossing can be further removed via a sequence of generalized Reidemeister moves and the 0-move on $S^2$. The resultant virtual knot/link,  $(L_\tau,\sigma)_v$, is called the \textit{minimal virtual representative} of $L_\tau$ with respect to $\sigma$.
\label{def-min-vir-clo}
\end{definition}

\begin{definition}(\textit{Link-type linkoid with respect to $\sigma$})
A linkoid $L_\tau$ is said to be link-type with respect to a closure permutation, $\sigma \in S_{2n}$, if the minimal representative diagram of $(L_\tau,\sigma)_v$ has no virtual crossings.  Thus, the virtual closure of $L_\tau$, with respect to $\sigma$, gives rise to a classical link, $\mathcal{C}$. Then, the link-type linkoid, $L_\tau$, is said to be of link-type $\mathcal{C}$ with respect to $\sigma \in S_{2n}$.
\label{def-link-type}
\end{definition}

\subsection{The Virtual Closure Map}
The virtual closures of linkoids define a map between linkoids and virtual links which can be formalized by the means of the \textit{virtual closure map}, as follows:

\begin{definition}(\textit{Virtual closure map})
Let for every $n \in \mathbb{N}$, $\mathcal{A}_n$ be the set of all linkoids with $n$ components and $H_n = \{ \sigma \in S_{2n} : \sigma^2(i) = i, \sigma(i) \neq i , \quad \forall i \in E \}$, where $E=\{ 1, 2, \ldots , 2n\}$. Let $\mathcal{V}$ be the set of all virtual knots/links. The \textit{virtual closure map}, $\varphi$, is defined as,  
\begin{equation*}
\begin{split}
\displaystyle
    \varphi &: \quad \{ (L,\tau,\sigma)  \mid  (L,\tau,\sigma) \in \mathcal{A}_n \times H_n \times H_n \text{ for some }n \in \mathbb{N}\} \longrightarrow \mathcal{V}\\
    & (L, \tau, \sigma) \mapsto (L_\tau,\sigma)_v, \quad \text{where } L \in \mathcal{A}_n \text{ and } \tau, \sigma \in H_n.  \\
\end{split}
\end{equation*}
\noindent where, $(L_\tau,\sigma)_v$ denotes the virtual closure, with respect to the closure permutation, $\sigma$, of the linkoid, $L$, with strand permutation, $\tau$.
\label{def-closure-map}
\end{definition}

\begin{figure}[ht!]
    \centering
\includegraphics[scale=0.5]{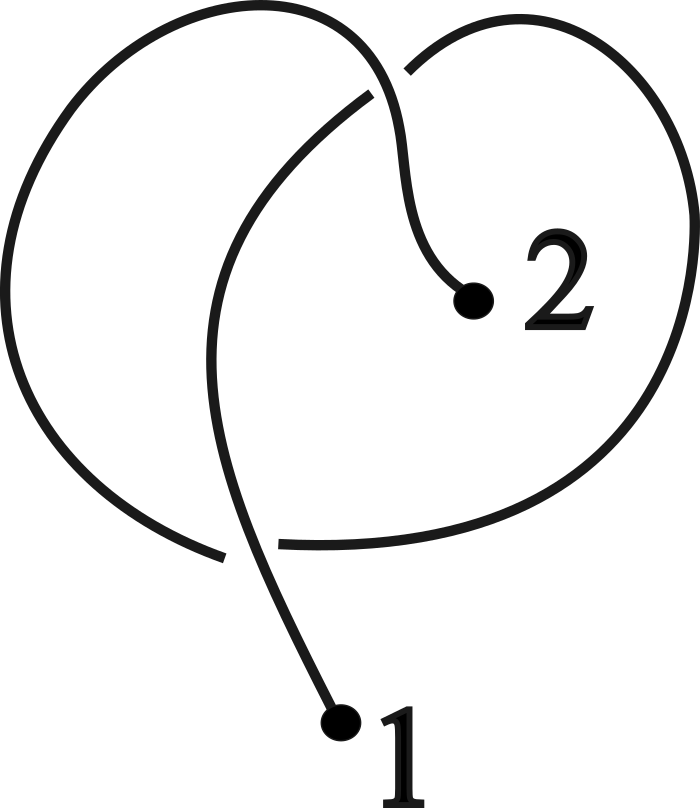}\hspace{50pt} \raisebox{0.1\totalheight}{\includegraphics[scale=0.5]{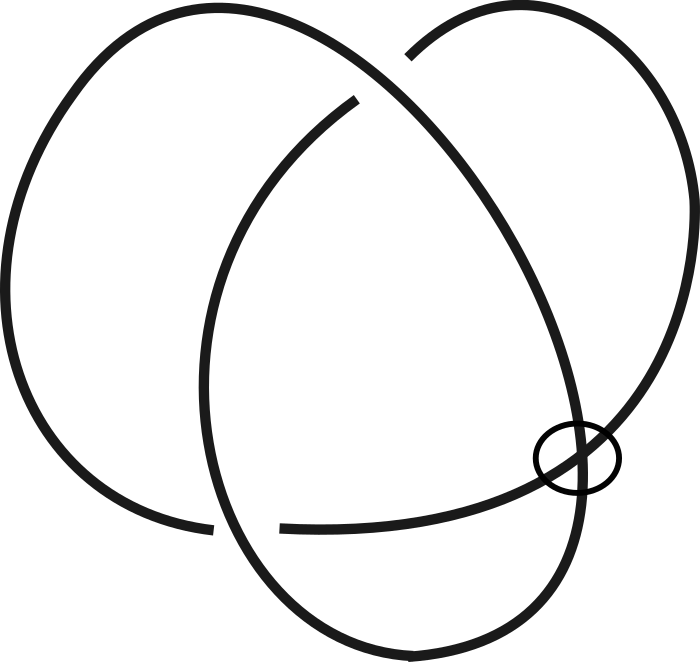}} \hspace{40pt} \includegraphics[scale=0.5]{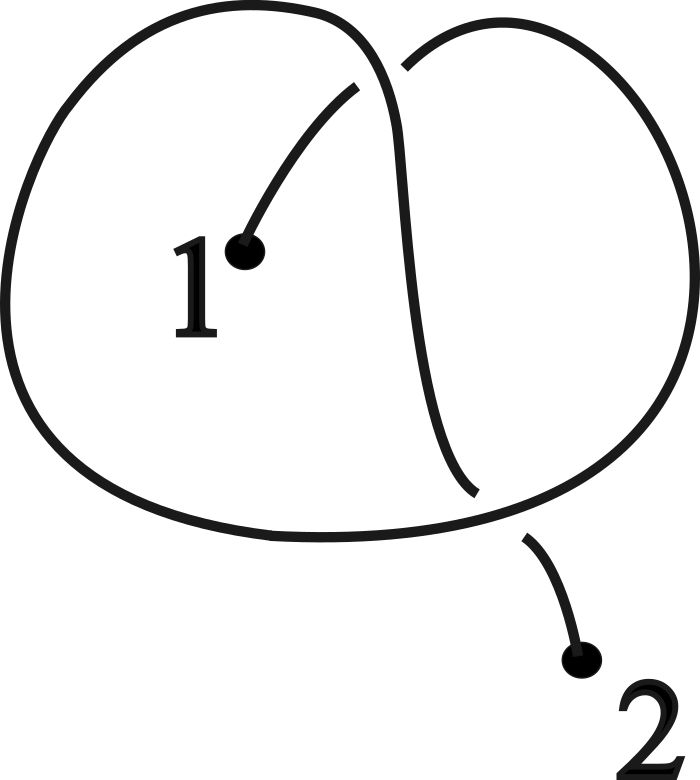} 
    \caption{(Left and Right) Two non-equivalent knotoid diagrams, $L_1$ and $M_1$, respectively. (Center) The virtual closures of $L_1$ and $M_1$ are equivalent and they correspond to the virtual trefoil knot.}
    \label{trefoil-closures-VM}
\end{figure}

\begin{theorem}\label{surj-thm}
The virtual closure map, $\varphi$, as given in Definition \ref{def-closure-map}, is surjective but not injective.
\end{theorem}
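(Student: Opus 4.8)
The plan is to treat the two assertions separately: surjectivity by an explicit cutting construction, and non-injectivity by exhibiting two distinct inputs with a common image.

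For surjectivity, I would begin with an arbitrary virtual knot/link $V$ and fix a diagram $D$ of it having some classical and some virtual crossings; the goal is to produce a triple $(L,\tau,\sigma)$ in the domain of $\varphi$ with $(L_\tau,\sigma)_v = V$. The central idea is to eliminate every virtual crossing by cutting. Processing the virtual crossings one at a time, at each crossing that still survives I select one of the two arcs passing through it and sever that arc at the crossing, creating a pair of new endpoints on either side (nudged off the transversal strand by genericity). Each such cut destroys one virtual crossing and creates none, so after finitely many steps only classical crossings remain; if any closed component survives (for instance a purely classical split component), I cut it once more at a generic non-crossing point. The resulting diagram $L$ is then a genuine linkoid diagram in the sense of Definition \ref{linkoid}, and its strand permutation $\tau\in H_n$ records which endpoints lie on a common arc.

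Next I would define the closure permutation $\sigma\in H_n$ to pair, for each cut, the two endpoints that cut produced, and then verify that $(L_\tau,\sigma)_v = V$. By Definition \ref{closure} each closure arc consists entirely of virtual crossings, and by the detour move (Proposition \ref{rem-clos1}) its immersion class is immaterial, so I may route the closure arc joining the two sides of a given cut back along the short sub-arc that was removed. A cut made at a former virtual crossing is thereby restored to a single virtual crossing with the transversal strand, while a cut made at a generic point is restored to a trivial reconnection; hence the closure is diagrammatically $D$ up to generalized Reidemeister moves and equals $V$ in $\mathcal{V}$. This places $V$ in the image of $\varphi$. The step demanding the most care is exactly this last verification: one must be sure the reconnection reproduces $V$ and not some heavier virtual link, and this is precisely where Proposition \ref{rem-clos1} is indispensable, since it allows each closure arc to be routed so that it re-crosses only the strand it originally met and introduces no spurious virtual crossings.

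For non-injectivity it suffices to produce two distinct elements of the domain sharing an image, and this is already displayed in Figure \ref{trefoil-closures-VM}. There $L_1$ and $M_1$ are non-equivalent knotoid diagrams, hence distinct elements of $\mathcal{A}_1$. Since a knotoid has $n=1$ component, the only admissible permutation on $E=\{1,2\}$ is the transposition $(1\ 2)$, so the two inputs are forced to be $(L_1,(1\ 2),(1\ 2))$ and $(M_1,(1\ 2),(1\ 2))$, which are distinct because $L_1\not\simeq M_1$. Yet both virtual closures are equivalent to the virtual trefoil, so $\varphi$ identifies them. Combined with well-definedness of $\varphi$ on equivalence classes (Proposition \ref{prop-L-sig-iso}), this establishes that $\varphi$ is surjective but not injective.
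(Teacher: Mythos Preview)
Your proof is correct and follows essentially the same strategy as the paper: for surjectivity both arguments cut a diagram of $V$ at its virtual crossings to produce a linkoid whose virtual closure (with $\sigma$ pairing the two sides of each cut) recovers $V$ via the detour move, and for non-injectivity both invoke the two knotoids of Figure~\ref{trefoil-closures-VM} sharing the virtual trefoil as their common closure. Your version is in fact slightly more careful, since you explicitly sever any surviving closed (purely classical) components---a case the paper's construction leaves implicit.
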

\begin{proof}
 Let $L_1$ and  $M_1$ be the two knotoids given in Figure \ref{trefoil-closures-VM} and for all $n \geq 2$, let, 
\begin{center}
  $\displaystyle L_n = L_1 \hspace{3pt} \bigsqcup_{i=2}^{n} \hspace{5pt} \raisebox{-13pt}{\includegraphics[scale=0.35]{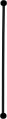}} _{2i}^{2i-1}$\hspace{10pt} and \hspace{10pt}$\displaystyle M_n = M_1 \hspace{3pt} \bigsqcup_{i=2}^{n} \hspace{3pt} \raisebox{-13pt}{\includegraphics[scale=0.35]{fig/triv-1-strand.png}} _{2i}^{2i-1}$  
\end{center}
 \noindent where $\raisebox{-11pt}{\includegraphics[scale=0.3]{fig/triv-1-strand.png}} _{2i}^{2i-1}$ is a crossingless open segment with endpoints $2i$ and $2i-1$, for all $2\leq i \leq n$. Clearly, $L_n$ and $M_n$ are not equivalent as linkoids. For both  $L_n, M_n \in \mathcal{A}_n$, let the strand permutation, $\tau$ and the closure permuataton $\sigma$ be each equal to $\prod_{i=1}^{n}(2i-1 \quad 2i)$. The map $\varphi$ is not injective since,
\begin{center}
    $\varphi(L_n,\tau,\sigma) = \raisebox{-13pt}{\includegraphics[scale=0.4]{fig/v-tref-closed.png}}  \hspace{3pt} \bigsqcup_{i=2}^{n} \hspace{3pt} \bigcirc = \varphi(M_n, \tau, \sigma).$
\end{center}

For any $V \in \mathcal{V}$, let $DV$ be a diagram with $m$ virtual crossings. For each virtual crossing, a unique arc may be chosen which passes through the virtual crossing but does not interact with the rest of $DV$. These $m$ arcs may be labelled as $(2i-1 \quad 2i)$, where $1\leq i \leq m$. Let $L$ be the linkoid diagram with $m$ components which is identical to $DV$ but does not contain the arcs bounded by $(2i-1 \quad 2i)$. Clearly, $DV$ is the virtual closure of $L$ with respect to $\sigma = \prod_{i=1}^{n}(2i-1 \quad 2i)$. Hence, $\varphi$ is surjective.

\end{proof}

\noindent The virtual closure map has the following properties :

\begin{enumerate}
    \item The image of the restriction map, ${\varphi}\big|_{L,\tau} : H_n \longrightarrow \mathcal{V}$, where $n$ is the number of components in $L$ is a subset of $\mathcal{V}$ and is denoted as $\displaystyle \operatorname{Im}({\varphi}{\big|}_{L,\tau})$. The number of elements in this subset is equal to $\displaystyle {\big|}\operatorname{Im}({\varphi}{\big|}_{L,\tau}){\big|} \leq |H_n| = \frac{1}{n!} \binom{2n}{2} \binom{2n-2}{2}\ldots \binom{2}{2}$. Therefore, there can be at most $|H_n|$ virtual knots/links that represent $L$ via the virtual closure map.
    \item Let $L$ be a trivial linkoid with $n$ components, and without loss of generality, let $\tau = \prod_{i=1}^{n}(2i-1 \quad 2i)$ i.e. $\displaystyle L=\bigsqcup_{i=1}^{n} \hspace{5pt} \raisebox{-11pt}{\includegraphics[scale=0.3]{fig/triv-1-strand.png}} _{2i}^{2i-1}$. Then, the virtual closure of $L$ with respect to any $\sigma \in H_n$ is a trivial link (up to generalized Reidemeister moves) with $\leq n$ components. In particular, virtual closure with respect to $\displaystyle \sigma =(1 \quad n) \prod_{i=1}^{n-1}(2i \quad 2i+1)$ corresponds to an unknot whereas,  virtual closure with respect to $\tau$ corresponds to a trivial link with $n$ components. Hence, $\displaystyle {\big|}\operatorname{Im}({\varphi}{\big|}_{L,\tau}){\big|} = n$. 
    \item Consider $\tau$ and $\sigma$ defined on $2n$ endpoints. For a given virtual link, $V \in \mathcal{V}$, of $m$ components, the preimage of the restriction map, $\displaystyle {\varphi}{\big|}_{\tau, \sigma} : \mathcal{A}_n \longrightarrow \mathcal{V}$, is the set of all $n$-component linkoids, with strand permutation $\tau$, whose virtual closure with respect to $\sigma$ is equal to $V$.
    \item  Without loss of generality, let $\tau = \prod_{i=1}^{n}(2i-1 \quad 2i)$, for a linkoid $L$ with $n$ components.  If $L$ has more than 1 component, ${\big|}\operatorname{Im}({\varphi}{\big|}_{L,\tau}){\big|}>1$  (Since virtual closure with respect to $\tau$ yields a virtual link with $n$-components, whereas  virtual closure with respect to $\sigma =(1 \quad n) \prod_{i=1}^{n-1}(2i \quad 2i+1)$ corresponds to a virtual knot with 1 component).
\end{enumerate}

\begin{remark}
    Under the map in Definition \ref{def-closure-map}, linkoids can be associated to embeddings of links in thickened surfaces of arbitrary genus.
\end{remark}

\section{Invariants of Linkoids via Virtual Closure}
\label{sec-inv-via-vc}

The framework introduced in the previous sections enables to define a new collection of linkoid invariants via invariants of virtual links. 

The following are some well-known invariants of virtual knots/links: fundamental group, quandles, biquandles, odd writhe, Jones polynomial, Khovanov homology, arrow polynomial, index polynomial, quantum link invariants and virtual Vassiliev invariants. All these can be defined for linkoids using the following theorem:

\begin{theorem}
Let $L_{\tau}$ be a linkoid, where $\tau$ is the strand permutation and let $\sigma$ a closure permutation. For any $\mathcal{F}$, an invariant of virtual links, let the map $\mathcal{F}_\sigma$, be defined as,
\begin{equation}\label{F_inv}
    \displaystyle {\mathcal{F}_\sigma}(L_\tau) : = \mathcal{F}((L_\tau,\sigma)_v),
\end{equation}
\noindent where $(L_\tau,\sigma)_v$ is the virtual closure of $L_\tau$ corresponding to  $\sigma$. Then $\mathcal{F}_\sigma$ is an invariant of linkoids.
\label{thm-F-inv}
\end{theorem}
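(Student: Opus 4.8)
The plan is to reduce the statement to the stability results for virtual closure that have already been established, since $\mathcal{F}_\sigma$ is defined diagrammatically and the claim ``$\mathcal{F}_\sigma$ is an invariant of linkoids'' is precisely the assertion that it takes a common value on any two diagrams representing the same linkoid. Thus the entire content is a well-definedness argument: I must show that if $L^1_\tau$ and $L^2_\tau$ are related by a finite sequence of the Omega (Reidemeister) moves and planar isotopies allowed in Definition \ref{linkoid} (with the forbidden moves $\Phi_+,\Phi_-$ excluded), then $\mathcal{F}_\sigma(L^1_\tau)=\mathcal{F}_\sigma(L^2_\tau)$.

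First I would record a prerequisite observation needed to even invoke the earlier propositions: the strand permutation $\tau$ is itself preserved under linkoid equivalence. None of the allowed moves creates, destroys, splits, or merges components, and none interchanges the head--foot roles of the endpoints (these are the intrinsic images of $0$ and $1$ under each component's parametrization). Hence, relative to a fixed labelling of the $2n$ endpoints, two equivalent linkoid diagrams carry the same $\tau$. This is exactly the hypothesis under which Proposition \ref{prop-L-sig-iso} is stated, so the pair $L^1_\tau, L^2_\tau$ is eligible for it, and $\sigma$ is held fixed throughout by assumption.

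Next I would assemble the two stability inputs. For a single diagram, Proposition \ref{rem-clos1} guarantees that $(L_\tau,\sigma)_v$ is a well-defined virtual link, independent of the immersion class of the closure arcs; consequently $\mathcal{F}_\sigma$ depends only on the underlying diagram and the data $(\tau,\sigma)$, not on how the closure is drawn. Then, for two equivalent diagrams, Proposition \ref{prop-L-sig-iso} provides that $(L^1_\tau,\sigma)_v$ and $(L^2_\tau,\sigma)_v$ are equivalent virtual links. Since $\mathcal{F}$ is by hypothesis an invariant of virtual links, it agrees on equivalent virtual links, and the chain
\begin{equation*}
\mathcal{F}_\sigma(L^1_\tau)=\mathcal{F}((L^1_\tau,\sigma)_v)=\mathcal{F}((L^2_\tau,\sigma)_v)=\mathcal{F}_\sigma(L^2_\tau)
\end{equation*}
closes the argument.

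This structure shows there is no genuine obstacle remaining at this level: the substantive work was absorbed into Proposition \ref{prop-L-sig-iso}, whose proof translates each permitted linkoid move into a generalized Reidemeister move on the closure. The single point requiring care is compatibility of the forbidden moves, which I would flag explicitly: the prohibition of $\Phi_+,\Phi_-$ in linkoid theory must correspond to the forbidden moves $FO$, $FU$ of virtual knot theory, so that transporting one closed diagram to the other never requires a move illegal among virtual links. Because the closure arcs consist entirely of virtual crossings, a forbidden linkoid move near an endpoint corresponds precisely to dragging a classical strand over or under the virtual closure arc, i.e.\ to an $FO/FU$ move; as these are excluded on both sides, the lift employed in Proposition \ref{prop-L-sig-iso} stays within the allowed virtual moves and the correspondence is clean.
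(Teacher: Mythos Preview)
Your proposal is correct and follows essentially the same approach as the paper: reduce to Propositions~\ref{rem-clos1} and~\ref{prop-L-sig-iso}, then apply the hypothesis that $\mathcal{F}$ is a virtual-link invariant to obtain the displayed chain of equalities. The final paragraph on forbidden moves is superfluous---since $\Phi_\pm$ are excluded from linkoid equivalence by definition, Proposition~\ref{prop-L-sig-iso} only ever transports allowed Reidemeister moves (which become classical Reidemeister moves on the closure), so no separate compatibility check with $FO/FU$ is required.
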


\begin{proof}
Let $L_\tau$ and $K_\tau$ be diagrams that differ by a Reidemeister move on linkoids. By Remark \ref{rem-clos1}, the virtual closure diagrams, $(L_\tau,\sigma)_v$ and $(K_\tau, \sigma)_v$, represent the same virtual knot, where $\sigma$ is the given closure permutation for $\mathsf{L}$.
Thus, $$\displaystyle \mathcal{F}_\sigma(L_\tau) =  \mathcal{F}((L_\tau,\sigma)_v) = \mathcal{F}((K_\tau,\sigma)_v)=\mathcal{F}_\sigma(K_\tau),$$.
\end{proof}
\begin{remark}
    Note that the virtual class of the closure, with respect to a closure permutation $\sigma$, of a linkoid $\mathsf{L}_\tau$ is by itself an invariant of $\mathsf{L}_\tau$. This virtual class of the closure is denoted $(\mathsf{L}_\tau, \sigma)_v$. 
\end{remark}

Theorem \ref{thm-F-inv} gives rise to the following corollary regarding link-type linkoids, which plays a critical role in Section \ref{sec-open-curves}, which discusses entanglement of open curves in 3-space.

\begin{corollary}
    Let ${L}_\tau$ be a link-type linkoid of type $\mathcal{C}$ (See Definition \ref{def-link-type}), with respect to a closure permutation $\sigma$, where $\tau$ denotes the strand permutation. Let   $\mathcal{F}$ denote an invariant of virtual links and $\mathcal{F}_\sigma$ the corresponding invariant of linkoids. Then 
    \begin{equation}\label{F-linktype}
    \displaystyle \mathcal{F}_\sigma(L_\tau) = \mathcal{F}(\mathcal{C})
    \end{equation}
\end{corollary}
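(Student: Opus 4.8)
The plan is to chain together the two relevant definitions with the invariance of $\mathcal{F}$. First I would unpack what it means for $L_\tau$ to be link-type of type $\mathcal{C}$ with respect to $\sigma$: by Definition \ref{def-link-type}, the minimal virtual representative of $(L_\tau,\sigma)_v$ carries no virtual crossings, so $(L_\tau,\sigma)_v$ is a classical link, and by hypothesis this classical link is exactly $\mathcal{C}$. In the language of virtual equivalence classes, $(L_\tau,\sigma)_v$ and $\mathcal{C}$ (viewed as a virtual link whose diagram happens to have no virtual crossings) are one and the same element of $\mathcal{V}$, since they share a common minimal representative and are therefore related by a sequence of generalized Reidemeister moves.

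Next I would apply the defining formula from Theorem \ref{thm-F-inv}, namely $\mathcal{F}_\sigma(L_\tau) = \mathcal{F}((L_\tau,\sigma)_v)$, which is well-defined precisely because Theorem \ref{thm-F-inv} already guarantees that $\mathcal{F}_\sigma$ is a linkoid invariant. Since $\mathcal{F}$ is by hypothesis an invariant of virtual links and $(L_\tau,\sigma)_v$ equals $\mathcal{C}$ as a virtual link, evaluating $\mathcal{F}$ on either representative returns the same value, i.e. $\mathcal{F}((L_\tau,\sigma)_v) = \mathcal{F}(\mathcal{C})$. Combining the two equalities yields $\mathcal{F}_\sigma(L_\tau) = \mathcal{F}(\mathcal{C})$, which is \eqref{F-linktype}.

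I do not expect a genuine obstacle here, since the statement is a direct corollary of Theorem \ref{thm-F-inv} together with Definition \ref{def-link-type}; the argument is essentially a substitution of definitions. The only point that deserves a word of care is the interpretation of the right-hand side $\mathcal{F}(\mathcal{C})$: here $\mathcal{C}$ is a \emph{classical} link, whereas $\mathcal{F}$ is an invariant of \emph{virtual} links. This is unambiguous because classical links embed faithfully into virtual links, so a classical link with no virtual crossings determines a well-defined virtual class, and $\mathcal{F}(\mathcal{C})$ is simply $\mathcal{F}$ evaluated on $\mathcal{C}$ regarded as a virtual link. For invariants such as the Jones polynomial or the arrow polynomial, this reduction to the classical value is exactly the behaviour one wants, and it is what makes the corollary useful for the entanglement measures developed in Section \ref{sec-open-curves}.
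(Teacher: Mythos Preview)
Your proposal is correct and follows exactly the same approach as the paper, which simply states that the result follows from Definition \ref{def-link-type} and Theorem \ref{thm-F-inv}. Your version is more detailed, spelling out explicitly that $(L_\tau,\sigma)_v$ and $\mathcal{C}$ coincide as virtual links and noting that classical links embed faithfully into virtual links, but the underlying argument is identical.
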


\begin{proof}
It follows by Definition \ref{def-link-type} and Theorem \ref{thm-F-inv}.
\end{proof}

The virtual closure map, $\varphi$, defined in Definition \ref{def-closure-map} is not injective, where $n \in \mathbb{N}$. Therefore, linkoids which are mapped to the same virtual knot/link under the virtual closure map, return the same value for any invariant of linkoids as defined in Theorem \ref{thm-F-inv}.

A more detailed discussion on some specific invariants of linkoids is provided in the remainder of this section. A combinatorial approach to determine the Jones polynomial and the arrow polynomial of linkoids, with respect to arbitrary closure permutations, is discussed and it is shown to be consistent with the definition in terms of virtual closure. Moreover, the extension of invariants such as, height, genus, odd writhe and the affine-index polynomial is discussed for linkoids in terms of virtual closure.

\subsection{Height and Genus of Linkoids and Virtual Links}

A fundamental topological invariant of a knotoid (linkoid with one component) is its height \cite{Turaev2012, Gugumcu2017}. 
Here, a generalized definition of the height of linkoids is given by referring to virtual crossings and the choice of closure permutation. 

\begin{definition}\textit{(Height of a linkoid with respect to $\sigma$)}
The \textit{height of a linkoid diagram} $L_\tau$, corresponding to a closure permutation $\sigma$, is the number of virtual crossings in its minimal virtual closure. The \textit{height of a linkoid}, $\mathsf{L}_\tau$ corresponding to $\sigma$ is defined as the minimum of the diagrammatic heights, taken over all knotoid/linkoid diagrams equivalent to $\mathsf{L}_\tau$.
\label{def-gen-hgt}
\end{definition}

\begin{remark}
   The  height of a knotoid, as defined in \cite{Turaev2012, Gugumcu2017}, is recovered as a special case of Definition \ref{def-gen-hgt} by taking the strand closure of the knotoid. 
\end{remark}

Via virtual closure, the genus of a linkoid can be defined as follows:

\begin{definition}(\textit{Genus of linkoid with respect to $\sigma$} )
 The \textit{genus}, $g$, of a linkoid $L_\tau$ with respect to a closure permutation, $\sigma$ is defined as the genus of the minimal embedding surface of the virtual knot/link, $(L_\tau,\sigma)$.
\end{definition}

\subsection{The Jones polynomial of Linkoids and Virtual Links}

In this section, the Jones polynomial of a knotoid/linkoid,  with respect to a given closure permutation, is discussed as the normalized bracket polynomial of the knotoid/linkoid, with the substitution $A=t^{-1/4}$. The bracket polynomial for linkoids was recently defined combinatorially, without referring to its virtual closure  in \cite{Barkataki2022}. In this section, it is shown that the definition in \cite{Barkataki2022} coincides with that of the strand virtual closure. Moreover, it is shown that the combinatorial definition of the bracket polynomial can be generalized to account for arbitrary closure permutations and that it coincides with the corresponding definition in terms of virtual closure. 

Below follows a combinatorial definition of the Jones polynomial of linkoids :

\begin{definition}(\textit{Jones polynomial of a linkoid with respect to $\sigma$})
The Jones polynomial of an oriented linkoid diagram, $L$, with respect to a closure permutation, $\sigma$, is defined as, \begin{equation}f_{L^\sigma} = (-A^3) ^{-\mathsf{Wr}(L)} \langle L^\sigma \rangle \hspace{0.05cm},\end{equation}
where  $\mathsf{Wr}(L)$ is the writhe of the linkoid diagram and $\langle L^\sigma \rangle$ is the generalized bracket polynomial of $L$, with respect to $\sigma$ (See Definition \ref{def_bkt}).
\label{def_jones}
\end{definition}

The generalized bracket polynomial in Definition \ref{def_jones} can be defined combinatorially as follows:

\begin{definition}(\textit{Generalized bracket polynomial of a linkoid with respect to $\sigma$})
Let $L$ be a linkoid diagram with $n$ components, $E$ be the set of its labelled endpoints  and $\sigma$ be a closure permutation. The generalized bracket polynomial of $L$, with respect to $\sigma$, is completely characterised by the following skein relation and conditions:
\begin{equation}
\left\langle\raisebox{-7pt}{\includegraphics[width=.04\linewidth]{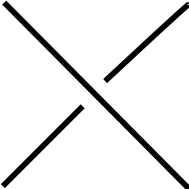}}\right\rangle=A\left\langle\raisebox{-7pt}{\includegraphics[width=.04\linewidth]{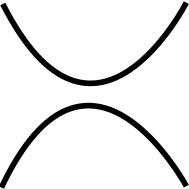}}\right\rangle +A^{-1}\left\langle\raisebox{-7pt}{\includegraphics[width=.04\linewidth]{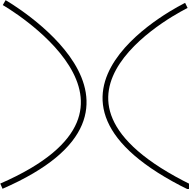}}\right\rangle, \hspace{0.5cm}\bigl\langle \tilde{L}\cup \bigcirc\bigr\rangle=d\bigl\langle \tilde{L}\bigr\rangle, \hspace{0.5cm} \left\langle S_\ell \right\rangle=d^{|E/\langle \tau_S, \sigma \rangle|}\quad ,
\label{bkt_sk_L}
\end{equation}
\noindent where $d=-A^2-A^{-2}$ ; $\tilde{L}$ is any linkoid diagram ; $S_\ell$ is the trivial linkoid formed by the collection of $n$ labelled open segments in a state, $S$, in the state sum expansion of $L$ and $\tau_S$ is the strand permutation of $S_\ell$. The term,
$|E/\langle \tau_S, \sigma \rangle|$, is the number of distinct segment cycles in $S_\ell$,  with respect to $\sigma$.

The \textit{generalized bracket polynomial} of $L$, with respect to $\sigma$, can be expressed as the following state sum expression :
\begin{equation}
    \bigl\langle L^{\sigma} \bigr\rangle \hspace{0.05cm} := \hspace{0.05cm} \sum_S A^{\alpha (S)} d^{\operatorname{circ}(S) + |E/\langle \tau_S, \sigma \rangle| -1} \quad , 
\end{equation}
\noindent where $S$ is a state corresponding to a choice of smoothing over all double points in $L$;\hspace{0.05cm} $\alpha(S)$ is the algebraic sum of the smoothing labels of $S$ and $\operatorname{circ}(S)$ is the number of circular components in $S$. 
\label{def_bkt}
\end{definition}

\noindent The generalized bracket polynomial of linkoids has the following properties :
\begin{enumerate}
    \item It is a one-variable polynomial that preserves the underlying skein relation of the bracket polynomial of classical knots and knotoids. 
    \item If $L$ is a link diagram, with no open components, then the labelled set of endpoints, $E=\emptyset$ and the generalized bracket polynomial  coincides with the traditional Kauffman bracket polynomial of $L$ i.e. $\displaystyle\langle L \rangle = \sum_S A^{\alpha (S)} d^{\operatorname{circ}(S) -1}\hspace{0.05cm}.$
    \item If $L$ is a knotoid diagram (linkoid with one component), then the generalized bracket polynomial coincides with the Kauffman bracket polynomial of knotoids. Any closure permutation connects the 2 open endpoints of $L$, thus $|E/\langle \tau_S, \sigma \rangle|=1$ for all $S$. Thus, $\displaystyle \langle L \rangle  =  \sum_S A^{\alpha (S)} d^{\operatorname{circ}(S)}\hspace{0.05cm}.$
    \item For an $n$-component linkoid diagram,  $L$, with strand permutation, $\tau$, and no crossings, the state sum only has 1 state, $S$. In this case, $\tau_S=\tau$ and $\displaystyle \langle L^\sigma \rangle = d^{|E/\langle \tau, \sigma \rangle|-1}\hspace{0.05cm}$, where $E$ denotes the set of labelled endpoints of $L$.
    \item If a linkoid diagram, $L$, is of link-type, with respect to a closure permutation, $\sigma$, the bracket polynomial coincides with that of the corresponding link upon the closure of endpoints.
\end{enumerate}

According to  Theorem \ref{thm-F-inv}, the Jones polynomial of a linkoid, $L_\tau$, with respect to a closure permutation $\sigma$, can be defined as the Jones polynomial of its virtual closure, $(L_\tau,\sigma)_v$. Theorem \ref{jones-l-im} shows the equivalence between Definition \ref{def_jones} and the definition of the Jones polynomial via virtual closure.

\begin{theorem}
 Let $L$ be a linkoid diagram with strand permutation, $\tau$ and closure permutation, $\sigma$ on the labelled set of endpoints, $E$. 
The Jones polynomial of $L$, according to Definition \ref{def_jones}, is equal to the Jones polynomial of the virtual closure, $(L_\tau,\sigma)_v$, namely, $f_{L^\sigma}=f_{ (L_\tau,\sigma)_v}$.
\label{jones-l-im}
\end{theorem}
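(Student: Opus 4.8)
The plan is to compare the two state-sum expressions term by term, exploiting the fact that the virtual closure introduces \emph{only} virtual crossings and therefore leaves all of the classical data --- crossings, smoothings, and writhe --- untouched. The whole statement is really a diagrammatic identity: the right-hand side $f_{(L_\tau,\sigma)_v}$ is computed from the particular diagram obtained by closing $L$ according to $\sigma$, and I will show that its Kauffman state sum reproduces exactly the combinatorial expression of Definition \ref{def_bkt}.

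First I would observe that the classical crossings of $L$ and those of $(L_\tau,\sigma)_v$ coincide, since by Definition \ref{closure} the closure arcs consist entirely of virtual crossings. Consequently the states of $(L_\tau,\sigma)_v$, obtained by smoothing each classical crossing while virtual crossings are never smoothed, are in natural bijection with the states $S$ of $L$: a smoothing choice at a classical crossing of $L$ is the same choice in the closure diagram. Under this bijection the smoothing-label sum $\alpha$ is preserved, and because the signs of classical crossings are unaffected by appending virtual closure arcs, the writhe is preserved as well, so that $\mathsf{Wr}(L) = \mathsf{Wr}((L_\tau,\sigma)_v)$ and the two normalizing factors $(-A^3)^{-\mathsf{Wr}(L)}$ agree.

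The crux of the argument is the loop count. Fix a state $S$: in $L$, smoothing the classical crossings produces $\operatorname{circ}(S)$ closed loops together with the trivial linkoid $S_\ell$ of $n$ open segments, whose endpoints are labelled by $E$ and whose strand permutation is $\tau_S$. Passing to the corresponding state $S'$ of $(L_\tau,\sigma)_v$ has no effect on the closed loops, but it joins the open segments by the closure arcs according to $\sigma$. Since, for the purpose of counting loops, a strand simply passes through a virtual crossing, each closure arc behaves as a plain arc connecting $i$ to $\sigma(i)$, and the loops so produced are exactly the orbits of $E$ under $\langle \tau_S, \sigma\rangle$. Applying Corollary \ref{corr-seg-virconn} to the crossingless linkoid $S_\ell$, their number is $|E/\langle \tau_S,\sigma\rangle|$; and because the closure arcs are attached only at the segment endpoints, they never merge with the pre-existing circular components. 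Hence
\[
\operatorname{circ}(S') = \operatorname{circ}(S) + |E/\langle \tau_S,\sigma\rangle|.
\]

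With these identities in hand the theorem is immediate. Summing over states,
\[
\langle (L_\tau,\sigma)_v \rangle = \sum_{S'} A^{\alpha(S')} d^{\operatorname{circ}(S')-1} = \sum_{S} A^{\alpha(S)} d^{\operatorname{circ}(S)+|E/\langle \tau_S,\sigma\rangle|-1} = \langle L^\sigma \rangle,
\]
which matches Definition \ref{def_bkt}, and multiplying by the common factor $(-A^3)^{-\mathsf{Wr}(L)}$ gives $f_{L^\sigma} = f_{(L_\tau,\sigma)_v}$. I expect the only genuinely delicate point to be the loop-count identity --- specifically, justifying that the virtual closure arcs contribute loops counted exactly by the $\langle \tau_S,\sigma\rangle$-orbits and do not interact with the circular components already present in the state. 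This is precisely where the ribbon/detour interpretation of virtual crossings from Section \ref{sec-basics} and the orbit count of Corollary \ref{corr-seg-virconn} do the essential work.
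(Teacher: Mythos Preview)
Your proposal is correct and follows essentially the same route as the paper's own proof: establish $\mathsf{Wr}(L)=\mathsf{Wr}((L_\tau,\sigma)_v)$ from the fact that the closure arcs carry only virtual crossings, put the states of $L$ and of its virtual closure in bijection via identical smoothing choices, and then use Corollary~\ref{corr-seg-virconn} to identify the loop count in each closed state with $\operatorname{circ}(S)+|E/\langle \tau_S,\sigma\rangle|$. Your write-up is in fact a bit more explicit than the paper's about the loop-count identity $\operatorname{circ}(S')=\operatorname{circ}(S)+|E/\langle \tau_S,\sigma\rangle|$, but the argument is the same.
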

\begin{proof}
 $L$ and $(L_\tau,\sigma)_v$ have equal number of classical crossings, therefore 
 $ \mathsf{Wr}(L) = \mathsf{Wr}\left((L_\tau,\sigma)_v\right)$.
Let $S$ be a state in the state sum expansion of $L$ corresponding to a choice of smoothing over the classical crossings in $L$. By the one-to-one correspondence between the classical crossings in $L$ and $(L_\tau,\sigma)_v$, there is a state $S_v$ in $(L_\tau,\sigma)_v$ corresponding to the same choice of smoothing.  $S_v$ is the virtual closure of $S$ since,  $S=S_v \backslash \{r \mid r \text{ is a virtual closure arc on 
 } E \}$. By Corollary \ref{corr-seg-virconn}, $S$ and $S_v$ have the same number of components. Thus $  \bigl\langle S^\sigma \bigr\rangle  =   \bigl\langle S_v \bigr\rangle $ for all states $S$ of $L$. Hence, $ \bigl\langle L^{\sigma} \bigr\rangle =  \bigl\langle (L_\tau, \sigma)_v \bigr\rangle$. Therefore, the Jones polynomial of $L$,  according to Definition \ref{def_jones}, is equal to the Jones polynomial of its virtual closure, i.e. 
 \begin{equation}\displaystyle
    f_{L^\sigma} = (-A^3) ^{-\mathsf{Wr}(L)} \langle L^\sigma \rangle  =  (-A^3) ^{-\mathsf{Wr}\left((L_\tau,\sigma)_v\right)} \langle (L_\tau,\sigma)_v \rangle = f_{(L_\tau,\sigma)_v}. 
 \end{equation}
\end{proof}

\begin{remark}
Theorem \ref{jones-l-im} allows to compute the Jones polynomial of virtual knots/links by using linkoids and without using the closure arcs. 
\end{remark}

 \begin{example}\label{bkt-ex-linkoid}
    Let us consider the linkoid, $\displaystyle  L = \raisebox{-15pt}{\includegraphics[scale=0.5]{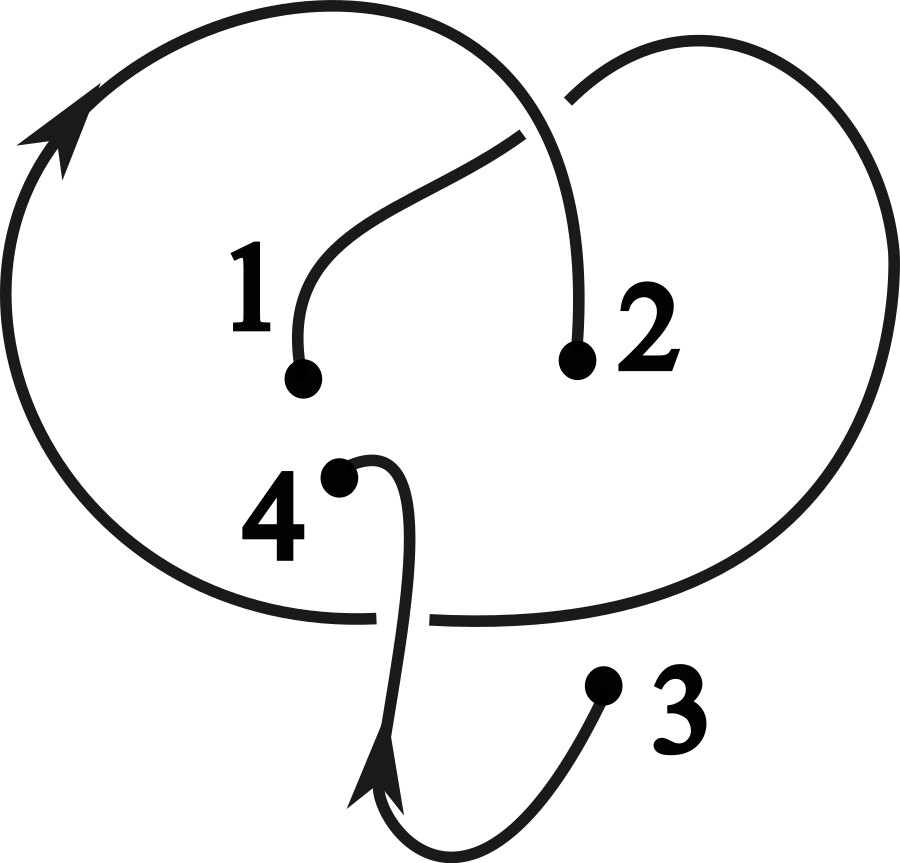}}$, with strand permutation $\tau=(1 \quad 2)(3 \quad 4)$.  The virtual closure of $L$ with respect to $\sigma = (1 \quad 4)(2 \quad 3)$ corresponds to the virtual trefoil knot, $\displaystyle  K = \raisebox{-12pt}{\includegraphics[scale=0.5]{fig/v-tref-closed.png}}$. Hence, the Jones polynomial of $L$ with respect to $\sigma$ is given as, $f_{L^\sigma} = f_{K} = (-A^3)^{-2}(A^2-A^{-4}+1) = A^{-4}- A^{-10}+ A^6$.
\end{example}

\subsection{The Arrow polynomial of Linkoids and Virtual Links}

The construction of the arrow polynomial invariant begins with the oriented state summation of the bracket polynomial where each local smoothing is either an oriented smoothing or a disoriented smoothing, as shown in Equation \ref{arrow_sk_L}, for both positive and negative classical crossings in a  diagram \cite{dye2009virtual}.

\begin{equation}
\begin{split}
\left\langle\raisebox{-7pt}{\includegraphics[width=.045\linewidth]{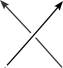}}\right\rangle_{\mathcal{A}}=A\left\langle\raisebox{-7pt}{\includegraphics[width=.045\linewidth]{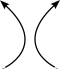}}\right\rangle_{\mathcal{A}} +A^{-1}\left\langle\raisebox{-7pt}{\includegraphics[width=.04\linewidth]{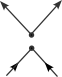}}\right\rangle_{\mathcal{A}}, &\hspace{0.5cm} \left\langle\raisebox{-7pt}{\includegraphics[width=.045\linewidth]{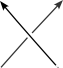}}\right\rangle_{\mathcal{A}}=A^{-1}\left\langle\raisebox{-7pt}{\includegraphics[width=.045\linewidth]{fig/ori-smooth.png}}\right\rangle_{\mathcal{A}} +A\left\langle\raisebox{-7pt}{\includegraphics[width=.04\linewidth]{fig/disori-smooth.png}}\right\rangle_{\mathcal{A}}, \\ \hspace{0.5cm}\bigl\langle \tilde{L}\cup \bigcirc\bigr\rangle_{\mathcal{A}}=d\bigl\langle \tilde{L}\bigr\rangle_{\mathcal{A}},& \hspace{0.5cm} d= -A^2 - A^{-2},
\label{arrow_sk_L}
\end{split}
\end{equation}

For linkoid diagrams, any final state in the state sum expansion consists of  circular loops and/or segment cycles determined by the choice of closure permutation. A loop or a segment cycle, which is obtained by disoriented smoothings, also includes one or more paired cusps. Each cusp can be regarded as an indication of one side of a given arc. The equivalence relation for reduction of cusps is given in Figure \ref{cancel}. This means that consecutive indications of both sides of an arc do not cancel, but consecutive indications of one side do cancel. With this reduction, the resulting state sum is invariant under the Reidemeister and virtual moves. 
\begin{figure}[ht!]
    \centering
    \includegraphics[scale=0.2]{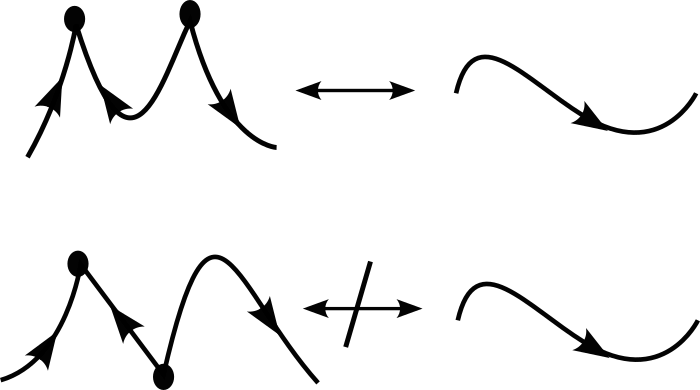}
    \caption{Equivalence moves on cusps in a state of the arrow polynomial.}
    \label{cancel}
\end{figure}
The generalized arrow polynomial of a linkoid can be defined as follows:


\begin{definition}(\textit{Generalized arrow polynomial of a linkoid with respect to $\sigma$})
Let $L$ be a linkoid diagram with $n$ components, $E$ be the set of its labelled endpoints  and $\sigma$ be a closure permutation. The \textit{generalized arrow polynomial} of $L$, with respect to $\sigma$, can be expressed as the following state sum expression :
\begin{equation}
    \bigl\langle L^{\sigma} \bigr\rangle_{\mathcal{A}} \hspace{0.05cm} := \hspace{0.05cm} \sum_S A^{\alpha (S)} d^{\operatorname{circ}(S) + |E/\langle \tau_S, \sigma \rangle| -1} \mathcal{K}_S \quad , 
\end{equation}
\noindent where $S$ is a state corresponding to a choice of smoothing (oriented or disoriented) over all classical in $L$;\hspace{0.05cm} $\alpha(S)$ is the algebraic sum of the smoothing labels of $S$ and $\operatorname{circ}(S)$ is the total number of circular components in $S$ and $|E/\langle \tau_S, \sigma \rangle|$ is the total number of segment cycles in $S$. Let $S_{i_1}, S_{i_2}, \ldots S_{i_m}$, with $i_1, i_2 \ldots i_m$ pairs of surviving cusps respectively, be all the  state components in $S$ with surviving pairs of cusps. Then $\langle S_j \rangle_{\mathcal{A}} = K_j$, for all $j \in \{ i_1, i_2, \ldots i_m \}$ and $\mathcal{K} = \prod_{j=1}^m K_{i_j} $ (See Figure \ref{Ki}).
\label{def_arrow-combi}
\end{definition}

\begin{figure}[ht!]
    \centering
    \includegraphics[scale=0.3]{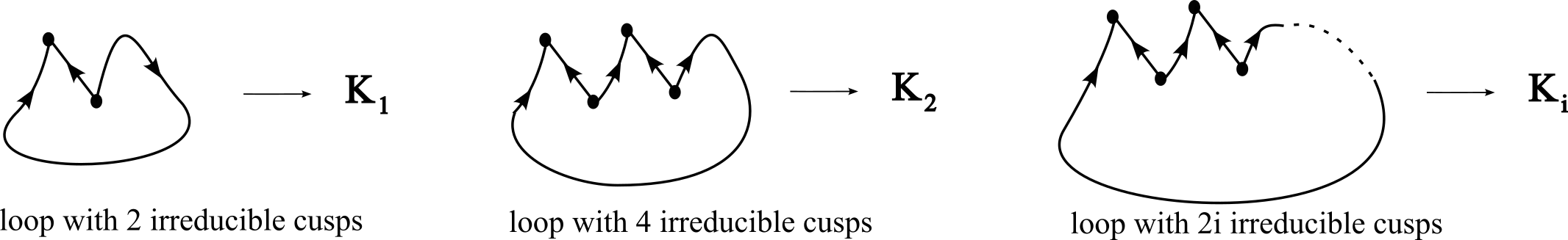}
    \caption{The arrow polynomial of a loop with $2i$ irreducible cusps is denoted by the variable, $K_i$, where $i \in \mathbb{N}$.}
    \label{Ki}
\end{figure}

\begin{remark}
    The generalized arrow polynomial of a linkoid is a polynomial in variables $A, K_i$, where $i \in \mathbb{N}$ with integer coefficients.
\end{remark}

\begin{remark}
    In \cite{Gugumcu2017}, the arrow polynomial of knotoids was defined where a state in the state sum expansion can have a cusped long segment. If the long segment has $2i$ irreducible cusps, then the arrow polynomial of the long segment is denoted by the variable $\Lambda_i$.  
\end{remark}

According to  Theorem \ref{thm-F-inv}, the generalized arrow polynomial of a linkoid, $L_\tau$, with respect to a closure permutation $\sigma$, can be defined as the arrow polynomial of its virtual closure, $(L_\tau,\sigma)_v$. This defintion is equivalent to Defintion \ref{def_arrow-combi} due to the following therorem:

\begin{theorem} Let $L$ be a linkoid diagram with strand permutation, $\tau$ and $\sigma$ be a closure permutation. The arrow polynomial of $L_\tau$, as in Definition \ref{def_arrow-combi}, coincides with the arrow polynomial of its virtual closure, $(L_\tau,\sigma)_v$. 
\label{def-arr-linkoid}
\end{theorem}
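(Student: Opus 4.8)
The plan is to mirror the argument of Theorem \ref{jones-l-im} for the ordinary bracket, upgrading it to carry the orientation data and the cusp variables $K_i$. First I would observe that the virtual closure $(L_\tau,\sigma)_v$ is obtained from $L$ by adjoining closure arcs carrying \emph{only} virtual crossings, so $L$ and $(L_\tau,\sigma)_v$ have exactly the same set of classical crossings, and in particular the same writhe. Since the arrow state sum in Definition \ref{def_arrow-combi} smooths only the classical crossings (oriented or disoriented), this produces a canonical bijection $S \leftrightarrow S_v$ between the states of $L$ and those of $(L_\tau,\sigma)_v$, where $S_v$ is the virtual closure of the smoothed diagram $S$. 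Under this bijection the label $\alpha(S)$ is unchanged, as it depends only on which smoothing is chosen at each classical crossing, so the factor $A^{\alpha(S)}$ matches term by term.

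Next I would match the power of $d$. A state $S$ of $L$ consists of $\operatorname{circ}(S)$ circular loops together with a collection of open segments whose strand permutation is $\tau_S$; closing up with respect to $\sigma$ joins these open segments into $|E/\langle \tau_S,\sigma\rangle|$ closed loops by Corollary \ref{corr-seg-virconn}, while leaving the circular loops untouched. Hence $\operatorname{circ}(S_v)=\operatorname{circ}(S)+|E/\langle \tau_S,\sigma\rangle|$, so the exponent $\operatorname{circ}(S)+|E/\langle \tau_S,\sigma\rangle|-1$ of Definition \ref{def_arrow-combi} is precisely $\operatorname{circ}(S_v)-1$, the exponent of $d$ in the ordinary arrow state sum of the closed diagram $S_v$.

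The one genuinely new ingredient is the cusp factor $\mathcal{K}_S$, and this is where the main work lies. I would argue that the closure arcs, being composed solely of virtual crossings, carry no cusps, and that the cusp-reduction relation of Figure \ref{cancel} is unaffected by sliding an arc across a virtual crossing — this is exactly the detour-move invariance already used to make the arrow polynomial well defined. Consequently, joining the open segments of $S$ into closed loops neither creates nor cancels a cusp: the $2i$ irreducible cusps lying on a segment cycle of $S$ become precisely the $2i$ irreducible cusps on the corresponding closed loop of $S_v$, contributing the same variable $K_i$, while the circular loops of $S$ are literally unchanged in $S_v$. Therefore $\mathcal{K}_S=\mathcal{K}_{S_v}$ for every state, and summing over the bijection $S\leftrightarrow S_v$ yields $\bigl\langle L^\sigma\bigr\rangle_{\mathcal{A}}=\bigl\langle (L_\tau,\sigma)_v\bigr\rangle_{\mathcal{A}}$; together with the equality of writhes this gives the claimed identity of arrow polynomials. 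The delicate point I expect to dwell on is precisely this cusp bookkeeping, namely verifying that no spurious cancellation arises where a closure arc meets the end of an open segment, but it reduces entirely to the detour-move invariance established in the construction of the arrow polynomial.
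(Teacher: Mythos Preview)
Your proposal is correct and is precisely the argument the paper intends: the paper's own proof consists of the single line ``It follows similarly to Theorem \ref{jones-l-im},'' and you have carried out exactly that parallel argument in detail, including the one extra step (matching the cusp factors $\mathcal{K}_S=\mathcal{K}_{S_v}$ via detour-move invariance) that distinguishes the arrow case from the Jones case.
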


\begin{proof}
    It follows similarly to Theorem \ref{jones-l-im}.
\end{proof}

Example \ref{arrow-states} shows the arrow polynomial state sum expansion for a particular linkoid. One of the final states in this example corresponds to a loop with cusps and virtual crossings. Via detour move, it can be seen that this state is in fact a loop with irreducible cusps (See Figure \ref{Ki}).

\begin{example}\label{arrow-states}
The arrow polynomial of \raisebox{-15pt}{\includegraphics[scale=0.2]{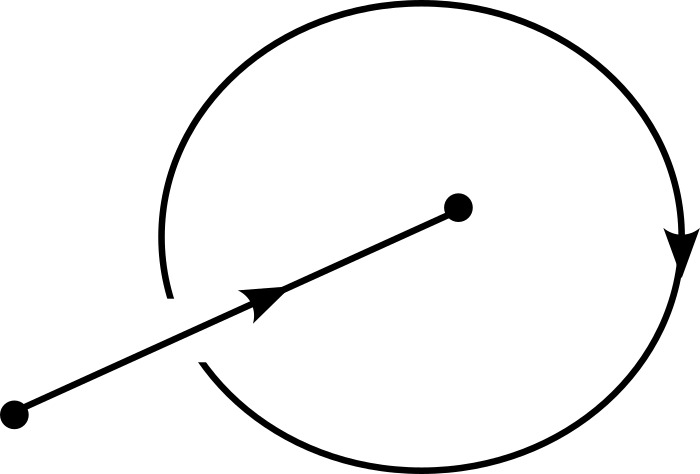}} is evaluated as follows :
    \begin{equation}
\begin{split}
\left\langle\raisebox{-15pt}{\includegraphics[width=.12\linewidth]{fig/CL-main.png}}\right\rangle_{\mathcal{A}}&= \left\langle\raisebox{-18pt}{\includegraphics[width=.1\linewidth]{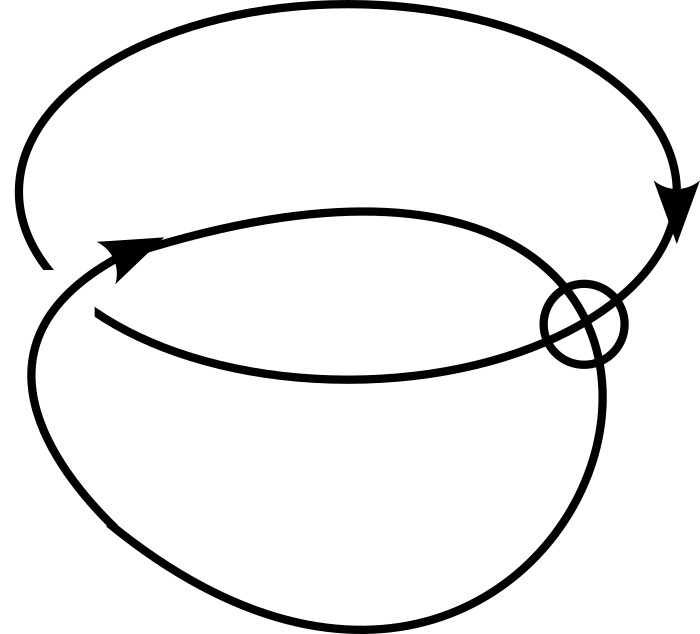}}\right\rangle_{\mathcal{A}}=A\left\langle\raisebox{-15pt}{\includegraphics[width=.1\linewidth]{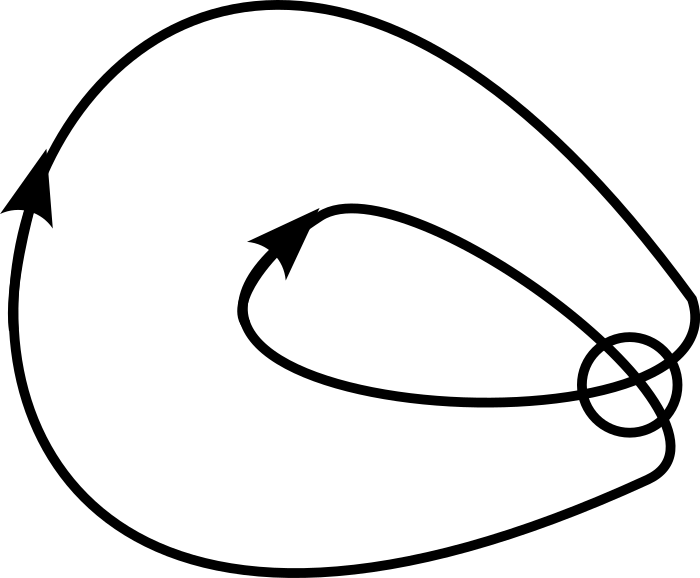}}\right\rangle_{\mathcal{A}} +A^{-1}\left\langle\raisebox{-20pt}{\includegraphics[width=.1\linewidth]{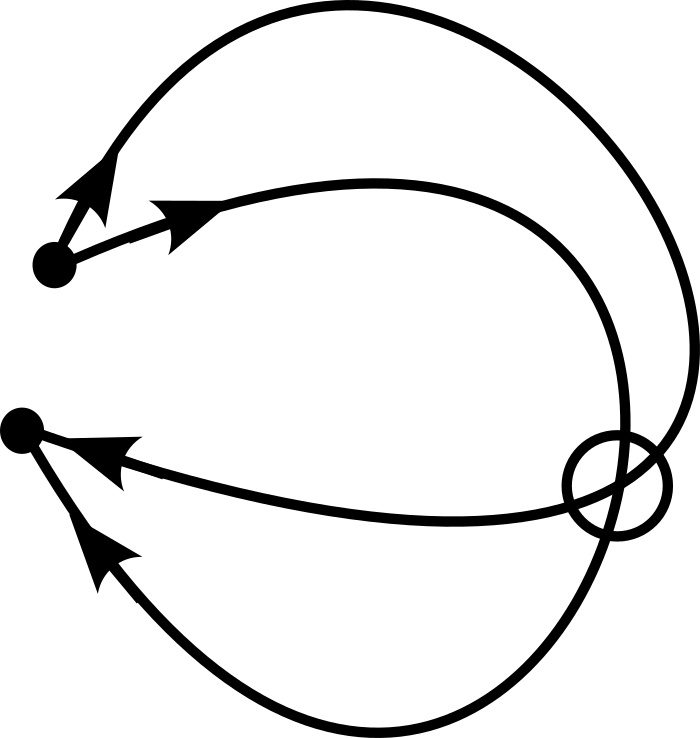}}\right\rangle_{\mathcal{A}}\\&=A +A^{-1}\left\langle\raisebox{-20pt}{\includegraphics[width=.1\linewidth]{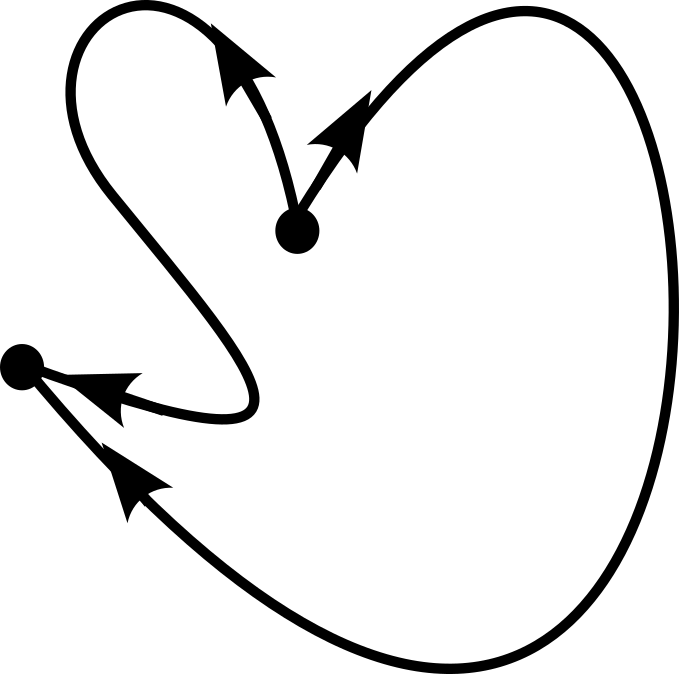}}\right\rangle_{\mathcal{A}} = A + A^{-1}K_1.\\
\label{arrow_simple}
\end{split}
\end{equation}
\end{example}

Example \ref{kishino-linkoid} illustrates the evaluatation the arrow polynomial of a 2 component linkoid:

\begin{figure}[ht!]
    \centering
\raisebox{-12pt}{\includegraphics[scale=0.35]{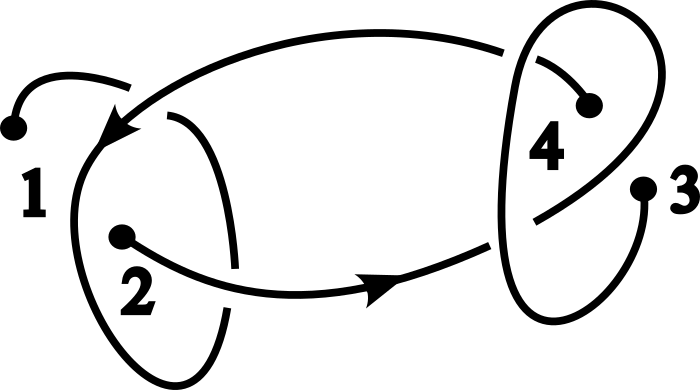}}\hspace{50pt}  \raisebox{-10pt}{\includegraphics[scale=0.32]{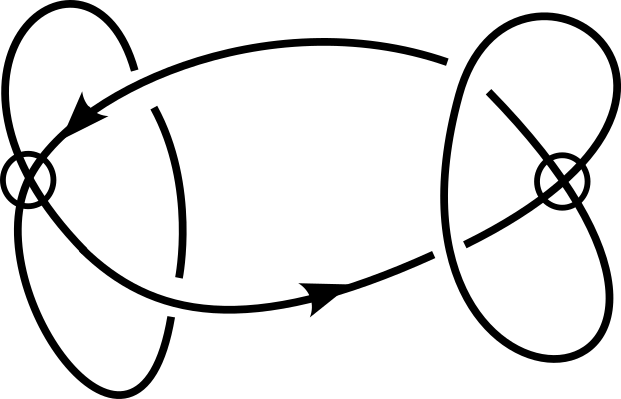}}
    \caption{(Left) A linkoid $L$ with 2 components and strand permutation, $\tau = (1 \quad 4)(2 \quad 3)$. (Right) The virtual closure of $L$, with respect to the closure permutation, $\sigma = (1 \quad 2)(3 \quad 4)$, is equal to the kishino knot, $K$.}
    \label{kishino-arrow}
\end{figure}

\begin{example}\label{kishino-linkoid}
    Let us consider the linkoid, $ L $, with strand permutation $\tau=(1 \quad 4)(2 \quad 3)$ and its   virtual closure, with respect to $\sigma = (1 \quad 2)(3 \quad 4)$, $K$ as in Figure \ref{kishino-arrow}. Since, the Kishino knot has trivial Jones polynomial, the Jones polynomial of $L$ with respect to $\sigma$ is also trivial (despite $L$ being a non-trivial linkoid). However, the arrow polynomial of $L$ is easily seen to be non-trivial, i.e. it matches the arrow polynomial of $K$, namely, $A^4 + 1 + A^{-4} - (A^4 + 2 + A^{-4})K^2_1 + 2K_2.$ Thus, with respect to $\sigma$, $L$ is a linkoid that the Jones polynomial cannot detect but the arrow polynomial is able to detect.
\end{example}

\subsection{The Affine-index polynomial of Linkoids and Virtual Links}

Given a labelled flat (without over/under information at crossings) virtual knot diagram, two numbers,  $W_{-}(c)$ and $W_{+}(c)$, can be defined at each classical node $c$. If there is a labeled classical node with left incoming arc $a$ and right incoming arc $b$, as in \raisebox{-9pt}{\includegraphics[scale=0.4]{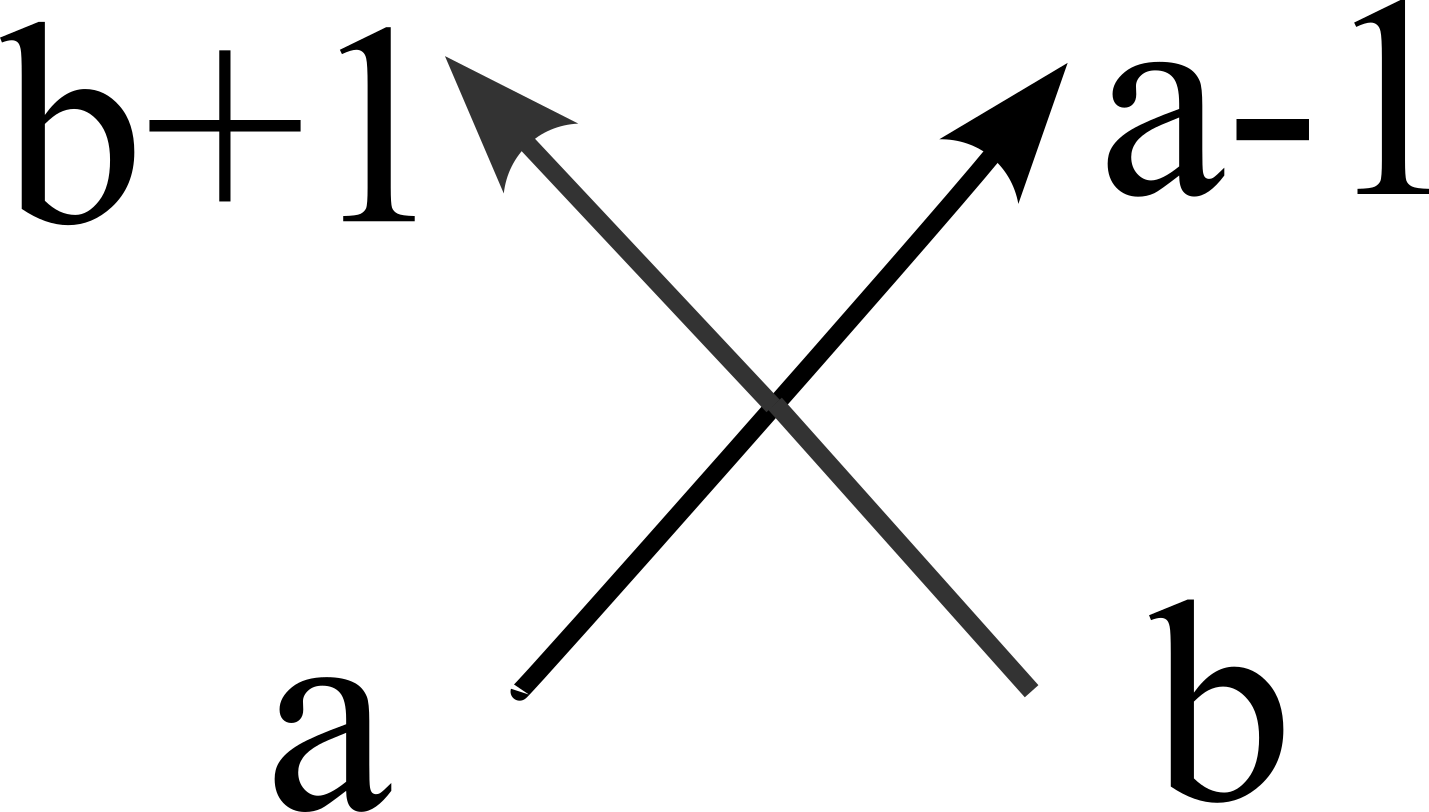}} , then the right outgoing arc is labeled $a-1$ and the left outgoing arc is labeled $b+1$. Then $W_{+}(c) = a-b-1 $ and $W_{-}(c) = -a + b + 1$ , respectively.
Note that $W_{-}(c)=-W_{+}(c)$ in all cases. Whereas, if there is a virtual node  with left incoming arc $a$ and right incoming arc $b$, then the right outgoing arc is labeled $a$ and the left outgoing arc is labeled $b$ (i.e. No change at a virtual crossing). The affine-index polynomial of a virtual knot was defined in \cite{Kauffman2013affine}, in terms of such integer labelling on flat virtual knot diagrams, in the following way: 

\begin{definition}(\textit{Affine index polynomial of virtual knots})
 The Affine Index Polynomial of a virtual knot diagram, $K$, is defined by the equation
 \begin{equation}
 P_K :=\sum_c \operatorname{sgn}(c)\left(t^{W_K(c)}-1\right)=\sum_c \operatorname{sgn}(c) t^{W_K(c)}-\mathsf{w r}(K) \quad,
\end{equation}
where $c$ denotes a classical crossing in $K$, $\operatorname{sgn}(c)$ is the sign of $c$, $\mathsf{wr}(K)$ is the writhe of $K$ and  $
W_K(c)=W_{\operatorname{sgn}(c)}(c)
$, obtained from labelling of arcs in the flat counterpart of $K$.
\end{definition}

According to  Theorem \ref{thm-F-inv}, the affine-index polynomial of a linkoid, $L_\tau$, with respect to a closure permutation $\sigma$, can be defined as the affine-index polynomial of its virtual closure, $(L_\tau,\sigma)_v$ provided $(L_\tau,\sigma)_v$ consists of only one component. 

\begin{definition}(\textit{Affine-index polynomial of a linkoid with respect to $\sigma$})
Let $L$ be a linkoid diagram with $n$ components, $E$ be the set of its labelled endpoints  and $\sigma$ be a closure permutation such that $(L_\tau,\sigma)_v$ consists of only one component. The \textit{affine-index polynomial} of $L$, with respect to $\sigma$, is defined as:
\begin{equation}
  P_{L^\sigma} := P_{(L_\tau,\sigma)_v} \quad, 
\end{equation}
\noindent where $P_{(L_\tau,\sigma)_v}$ is the affine -index polynomial of the virtual knot, $(L_\tau,\sigma)_v$.
\label{def_affine-linkoid}
\end{definition}

Example \ref{affine-example} illustrates the evaluatation the affine-index polynomial of a 2 component linkoid:

\begin{example}
Consider the linkoid, $\displaystyle  L = \raisebox{-15pt}{\includegraphics[scale=0.4]{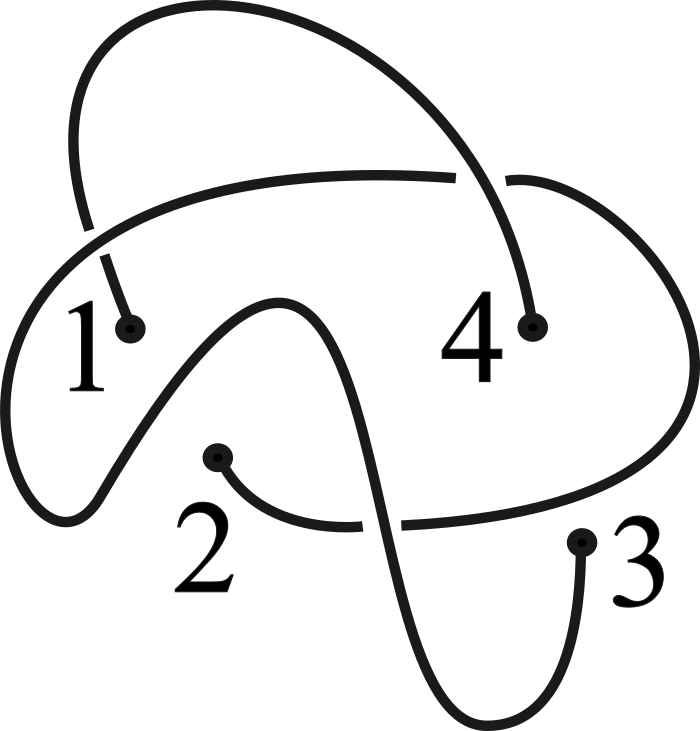}}$, with 2 components. The virtual closure of $L$, with respect to $\sigma = (1 \quad 2)(3 \quad 4)$, has only 1 component whose flat diagram is given as, $\displaystyle  FV = \raisebox{-15pt}{\includegraphics[scale=0.4]{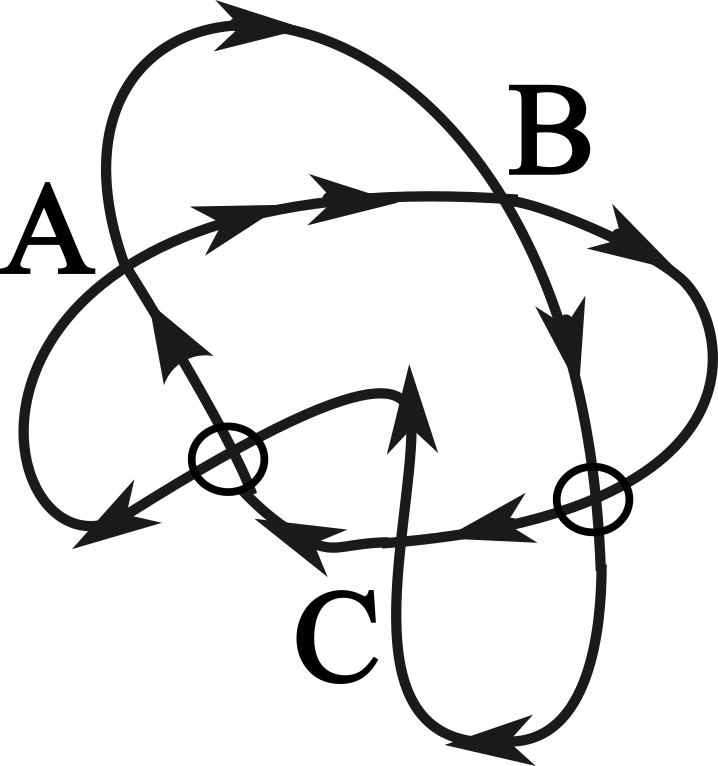}}$. The labels $A, B$ and $C$ correspond to classical crossings, which all have positive signs. Thus,  $W_{+}(A) = -2, W_{+}(B) = 2 $ and $W_{+}(C) = 0$ are respective the weights associated with the classical crossings. The virtual knot, $(L_\tau,\sigma)_v$ is one with unit Jones polynomial. However, it has a non-trivial affine-index polynomial, namely, 
$
P_{(L_\tau,\sigma)_v}=t^{-2}+t^2-2$. In other words, the linkoid $L$  has trivial Jones polynomial but non-trivial affine-index polynomial, with respect to $\sigma = (1 \quad 2)(3 \quad 4)$.
\label{affine-example}
\end{example}

\subsection{The Odd Writhe of Linkoids and Virtual Knots}

The odd writhe of a virtual knot, which was shown to be an invariant of virtual
knots in \cite{Kauffman2004b}, is defined as follows:

\begin{definition}(\textit{Odd crossing} and \textit{Odd writhe of a virtual knot})
A crossing of a virtual knot is called \textit{odd} if its crossing labels in the Gauss diagram have an odd number of crossing labels between them. The \textit{odd writhe} of the virtual knot is defined as the sum of the signs of the odd crossings. 
\label{def-odd-cr-wr}
\end{definition}

Note that in classical knots, the odd writhe is always zero, since all crossings of classical knots are
even. As an example, the virtual closure of the linkoid in Figure \ref{odd-wr-gauss} has odd writhe equal to $-1$, which proves that this knot is not equivalent to any classical knot.

\begin{figure}[ht!]
    \centering
    \includegraphics[scale=0.4]{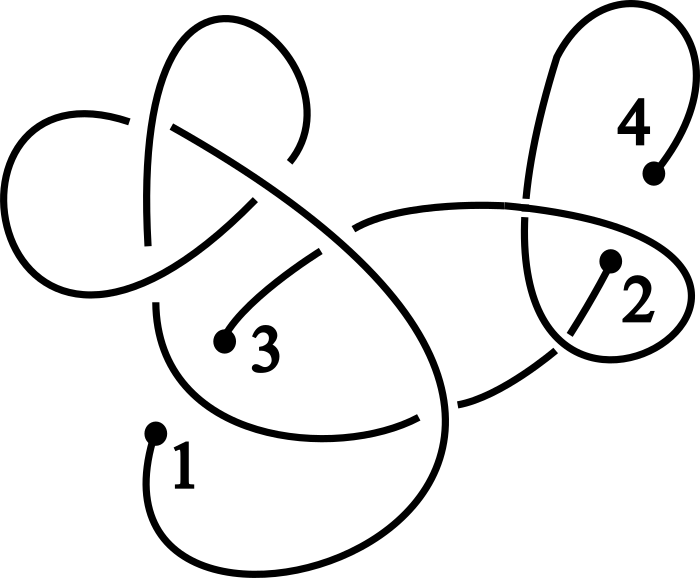}  \quad \includegraphics[scale=0.4]{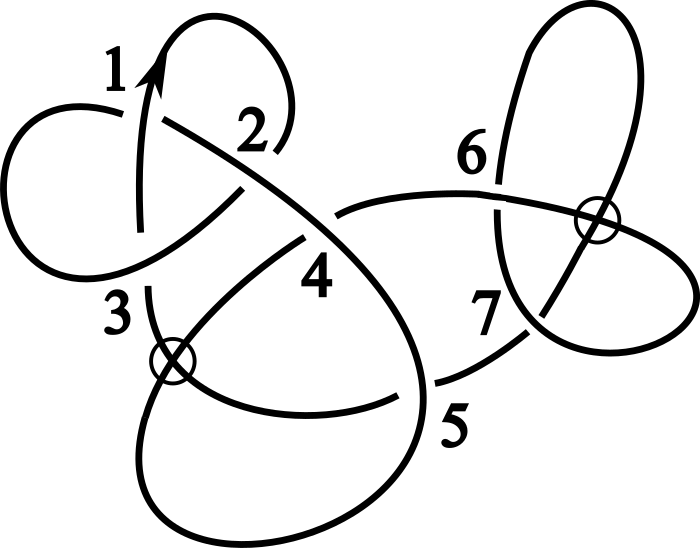}  \quad \includegraphics[scale=0.5]{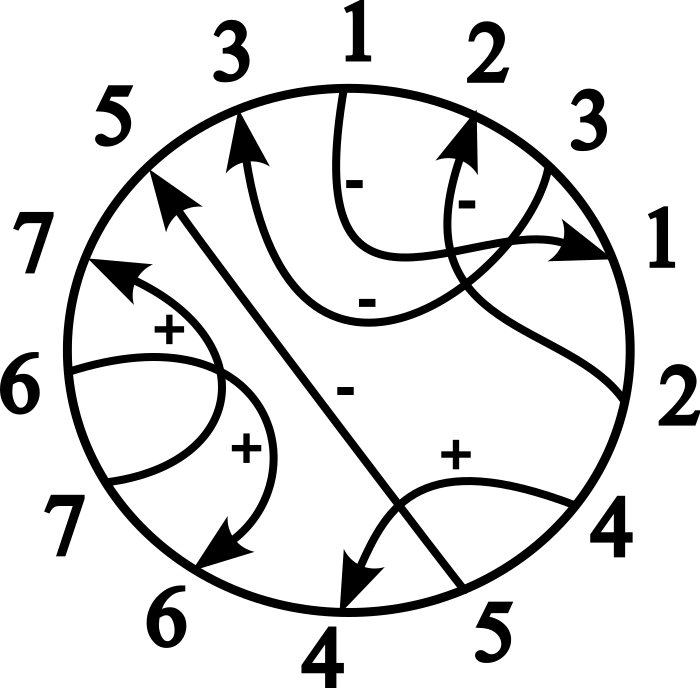}
    \caption{A linkoid diagram with strand permutation $\tau =(1 \quad 2)(3 \quad 4)$, its virtual closure with respect to $\sigma = (1 \quad 3)(2 \quad 4)$ and its Gauss diagram. The crossings labelled, $4, 5, 6$ and $7$ are odd and the odd-writhe of the virtual knot diagram is $-1$.}
    \label{odd-wr-gauss}
\end{figure}

Due to Theorem \ref{thm-F-inv}, the odd writhe of linkoids is well-defined  with respect to closure permutations which gives rise to a virtual knot (one component):

\begin{definition}(\textit{Odd writhe of a linkoid with respect to $\sigma$})
Let $L$ be a linkoid diagram and $\sigma$ be a closure permutation such that, $(L, \sigma)_v$ has one component. The \textit{odd writhe}, $\mathsf{owr}$ of $L$, with respect to $\sigma$, is defined as the sum of the signs of the odd crossings in $(L, \sigma)_v$.
\label{def-odd-sig-wr}
\end{definition}

\begin{remark}
For linkoids, the odd-writhe is an integer valued invariant.    
\end{remark}

\begin{remark}
    In the context of virtual links (2 or more components), the notion of parity does not extend naturally. Depending upon how the link components are examined, there is an ambiguity regarding whether a crossing is odd or even. This ambiguity can be remedied  by defining even and odd for crossings in individual components while labeling crossings shared by 2 components as link crossings \cite{kaestner2012}. 
\end{remark}

\section{The Virtual Spectrum of a Linkoid}
\label{sec-vir-spec}

 This section introduces the virtual spectrum of linkoids which leads to new invariants of linkoids that are not those of any given virtual closure. For any linkoid with fixed strand permutation, the virtual spectrum is the set of virtual links resulting from virtual closure over all possible closure permutations. More formally, it can be defined as follows:

\begin{definition}(\textit{Virtual spectrum of a linkoid})
The \textit{virtual spectrum} of a linkoid, $L$, with $n$ components and strand permutation, $\tau$, is defined to be the set, $\operatorname{spec}(L) := \displaystyle \{ \varphi_n(L,\tau, \sigma) : \sigma \in H_n \}$, where $\varphi_n$ is the virtual closure map defined on linkoids with $n$ components, $\displaystyle E=\{ 1, 2, \ldots , 2n\}$ and $\displaystyle H_n = \{ \sigma \in S_{2n} : \sigma^2(i) = i, \sigma(i) \neq i , \quad \forall i \in E \}$.
In other words, the virtual spectrum of $L$ is given by
$\displaystyle \operatorname{Im}({\varphi_n}{\big|}_{L,\tau})$.
\label{def-vir-spec}
\end{definition}

\begin{theorem}\label{thm-spec-is-inv}
For a linkoid, $L$, its virtual spectrum, $\operatorname{spec}(L)$ is an invariant of $L$.
\end{theorem}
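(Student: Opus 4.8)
The plan is to reduce the statement to the single-closure invariance already established in Proposition~\ref{prop-L-sig-iso}. Recall that $\operatorname{spec}(L) = \operatorname{Im}(\varphi_n|_{L,\tau}) = \{(L_\tau,\sigma)_v : \sigma \in H_n\}$ is a \emph{set} of virtual-link types, indexed by the closure permutations $\sigma \in H_n$. To show this set is an invariant of $L$, I would take two linkoid diagrams $L^1$ and $L^2$ that represent the same linkoid, i.e.\ that are related by a finite sequence of $\Omega$-moves (Reidemeister moves) and isotopy, and prove $\operatorname{spec}(L^1) = \operatorname{spec}(L^2)$.

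First I would fix the bookkeeping of endpoint labels. Since the permitted moves on linkoid diagrams neither create nor destroy endpoints and never merge distinct open components, carrying the labels $1,\dots,2n$ along the isotopy identifies the $2n$ endpoints of $L^1$ with those of $L^2$ and preserves the head--foot pairing. Hence $L^1$ and $L^2$ may be taken to share a common strand permutation $\tau$, and the two restriction maps $\varphi_n|_{L^1,\tau}$ and $\varphi_n|_{L^2,\tau}$ have the same domain $H_n$, so their images are comparable as sets.

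Then, for each fixed $\sigma \in H_n$, Proposition~\ref{prop-L-sig-iso} gives that $(L^1_\tau,\sigma)_v$ and $(L^2_\tau,\sigma)_v$ are equivalent virtual links, i.e.\ $\varphi_n(L^1,\tau,\sigma) = \varphi_n(L^2,\tau,\sigma)$. Taking the union over all $\sigma \in H_n$ yields $\operatorname{Im}(\varphi_n|_{L^1,\tau}) = \operatorname{Im}(\varphi_n|_{L^2,\tau})$, which by Definition~\ref{def-vir-spec} is precisely $\operatorname{spec}(L^1) = \operatorname{spec}(L^2)$. Because the equivalence relation on linkoids is generated by these moves, it suffices to treat a single move, and the general case follows by induction on the length of the move sequence.

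The one genuinely subtle point---the part I would spell out rather than the routine Reidemeister verification, which is the expected main obstacle---is independence from the initial choice of endpoint labeling. Relabeling the endpoints by some $\rho \in S_{2n}$ replaces $\tau$ by $\rho\tau\rho^{-1}$ and each $\sigma$ by $\rho\sigma\rho^{-1}$, yet the resulting closed diagram is unchanged \emph{as a virtual link}, since relabeling the endpoints does not alter which arcs are joined by the closure. Moreover, as $\sigma$ ranges over all of $H_n$ so does $\rho\sigma\rho^{-1}$, because $H_n$ is the set of fixed-point-free involutions in $S_{2n}$ (products of $n$ disjoint transpositions) and is therefore a conjugacy class, hence closed under conjugation. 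Consequently the set $\operatorname{spec}(L)$ is independent of the labeling. Combined with the per-$\sigma$ invariance above, this shows the spectrum depends only on the equivalence class of $L$, completing the proof.
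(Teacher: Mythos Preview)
Your proposal is correct and follows essentially the same approach as the paper: both reduce the invariance of $\operatorname{spec}(L)$ to the per-$\sigma$ invariance of $(L_\tau,\sigma)_v$ established in Proposition~\ref{prop-L-sig-iso} (the paper also cites Proposition~\ref{rem-clos1} for independence from the immersion class of closure arcs), and then conclude that the collection over all $\sigma\in H_n$ is an invariant. Your additional paragraph on independence from the endpoint labeling---via conjugation by $\rho\in S_{2n}$ and closure of $H_n$ under conjugation---is a point the paper does not address explicitly, so your argument is slightly more thorough, but the core reasoning is the same.
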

\begin{proof}
 By Definition \ref{def-vir-spec}, each element in $\operatorname{spec}(L)$ is of the form, $(L,\sigma)_v$, where $\sigma$ is a closure permutation. For any $\sigma$, the virtual closure,  $(L,\sigma)_v$, is a linkoid invariant by  
 Remark \ref{rem-clos1} and Proposition \ref{prop-L-sig-iso}. Hence, $\operatorname{spec}(L)$ is an invariant of $L$. 
\end{proof}

\begin{corollary}\label{corr-equiv-diag-spec}
If two linkoid diagrams, $L_1$ and $L_2$, are equivalent up to Reidemeister moves, then $\operatorname{spec}(L_1) = \operatorname{spec}(L_2)$. 
\end{corollary}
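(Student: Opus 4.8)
The plan is to reduce the corollary to an element-wise comparison of the two spectra, invoking Proposition \ref{prop-L-sig-iso}. First I would record the structural data preserved under equivalence: since $L_1$ and $L_2$ are related by a finite sequence of Reidemeister (Omega) moves, and these moves neither create nor destroy endpoints nor alter the head-foot connectivity of any component, both diagrams have the same number of components $n$ and the same strand permutation $\tau$. Consequently the index set $H_n = \{\sigma \in S_{2n} : \sigma^2(i)=i,\ \sigma(i)\neq i\ \forall i \in E\}$ appearing in Definition \ref{def-vir-spec} is literally the same for the two diagrams, so the spectra $\operatorname{spec}(L_1)$ and $\operatorname{spec}(L_2)$ are parametrized by the identical collection of closure permutations. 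This is the only point demanding any care, and it is precisely what licenses a pointwise argument.

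Next I would fix an arbitrary $\sigma \in H_n$ and compare the associated elements of the two spectra, namely $\varphi_n(L_1,\tau,\sigma) = (L_1,\sigma)_v$ and $\varphi_n(L_2,\tau,\sigma) = (L_2,\sigma)_v$. By Proposition \ref{prop-L-sig-iso}, two equivalent linkoid diagrams sharing strand permutation $\tau$ have equivalent virtual closures with respect to any common $\sigma$; hence these two virtual closures represent the same element of $\mathcal{V}$. Since $\sigma$ was arbitrary, the two set-valued images $\operatorname{Im}(\varphi_n\big|_{L_1,\tau})$ and $\operatorname{Im}(\varphi_n\big|_{L_2,\tau})$ coincide as subsets of $\mathcal{V}$, which is exactly $\operatorname{spec}(L_1) = \operatorname{spec}(L_2)$ by Definition \ref{def-vir-spec}.

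Alternatively, the statement follows in one line from Theorem \ref{thm-spec-is-inv}: the virtual spectrum is an invariant of the underlying linkoid, and equivalent diagrams represent one and the same linkoid, so their spectra must agree. I therefore expect no genuine obstacle in the argument; the corollary is essentially a restatement of Theorem \ref{thm-spec-is-inv} at the diagram level, and the substantive content has already been carried by Proposition \ref{prop-L-sig-iso}.
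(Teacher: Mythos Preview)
Your proposal is correct, and your alternative one-line argument citing Theorem \ref{thm-spec-is-inv} is exactly the paper's proof. Your longer element-wise argument via Proposition \ref{prop-L-sig-iso} is also fine but essentially just re-derives the content of Theorem \ref{thm-spec-is-inv}, so it is not a genuinely different route.
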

\begin{proof}
    It follows from Theorem \ref{thm-spec-is-inv}
\end{proof}

\begin{corollary}
Let  $L_1$ and $L_2$ be two linkoid diagrams. If \quad $ \operatorname{spec}(L_1) \neq \operatorname{spec}(L_2)$, then $L_1$ and $L_2$ are distinct.
\end{corollary}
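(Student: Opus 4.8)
The plan is to recognize this statement as the logical contrapositive of Corollary \ref{corr-equiv-diag-spec} and to derive it directly. Corollary \ref{corr-equiv-diag-spec} has the form $P \implies Q$, where $P$ is the assertion that $L_1$ and $L_2$ are equivalent up to Reidemeister moves and $Q$ is the assertion $\operatorname{spec}(L_1) = \operatorname{spec}(L_2)$. First I would restate this implication explicitly, then pass to its contrapositive $\neg Q \implies \neg P$, which reads: if $\operatorname{spec}(L_1) \neq \operatorname{spec}(L_2)$, then $L_1$ and $L_2$ are not equivalent. Since ``distinct'' for linkoid diagrams means precisely ``not equivalent under the Reidemeister and isotopy relation of Definition \ref{linkoid},'' this is exactly the desired conclusion.

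Because Corollary \ref{corr-equiv-diag-spec} already rests on Theorem \ref{thm-spec-is-inv} (the invariance of the spectrum), no additional topological input is required; the entire argument is a single logical step. The only point warranting a moment of care is to confirm that the notion of ``distinct'' intended here coincides with non-equivalence \emph{as linkoids}, rather than mere diagrammatic inequality, so that the negated conclusion $\neg P$ matches the hypothesis $P$ of Corollary \ref{corr-equiv-diag-spec} verbatim.

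The main ``obstacle'' is therefore expository rather than mathematical: this corollary is genuinely immediate from the preceding one, and a fully honest proof amounts to the single sentence ``It follows by taking the contrapositive of Corollary \ref{corr-equiv-diag-spec}.'' I would present it in exactly that form, trusting the reader to supply the one-line logical rewriting.
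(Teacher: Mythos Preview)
Your proposal is correct and essentially matches the paper's approach: the paper's proof is the single line ``It follows from Theorem \ref{thm-spec-is-inv},'' while you route through its immediate consequence, Corollary \ref{corr-equiv-diag-spec}, and take the contrapositive. Both amount to the same one-line observation that invariance of the spectrum forces distinct spectra to come from distinct linkoids.
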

\begin{proof}
    It follows from Theorem \ref{thm-spec-is-inv}.
\end{proof}

\begin{remark}
    Two non-equivalent linkoids will have the same virtual spectrum only if all the closure permutations give the same virtual knots/links.
\end{remark}

\begin{example}
Note that, $\displaystyle \raisebox{-12pt}{\includegraphics[scale=0.3]{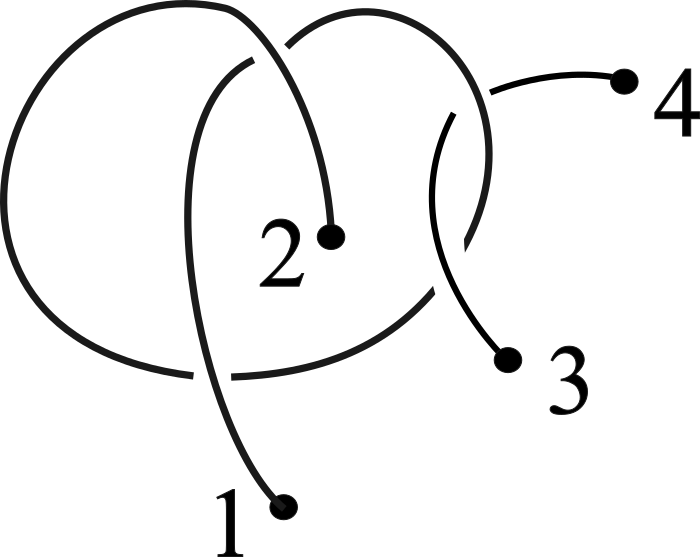}}$  and $\displaystyle \raisebox{-12pt}{\includegraphics[scale=0.3]{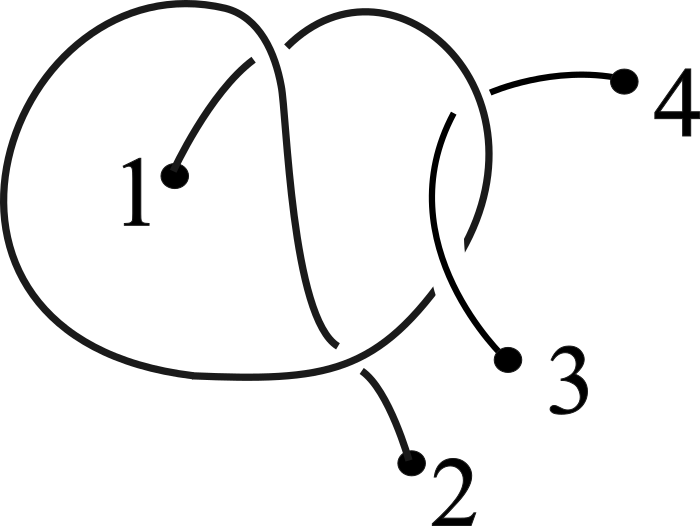}}$ are  two non-equivalent linkoids with same number of components and same virtual spectrum. 
\end{example}

\begin{remark}
    Property ii of the virtual closure map implies that the virtual spectrum of a trivial linkoid with $n$-components has size $n$.
\end{remark}



\subsection{New Invariants of Linkoids from the virtual spectrum}

New invariants of linkoids can be defined, independent of any particular closure permutation, by means of the virtual spectrum of linkoids. 

\begin{definition}(\textit{Average spectral $\mathcal{F}$-invariant of a linkoid})
Let $\mathcal{F}$ be an invariant of virtual links, $L$ be a linkoid and $\displaystyle \mathcal{F}(\operatorname{spec}(L))$ be the collection of values of $\mathcal{F}$ over all virtual links in $\operatorname{spec}(L)$. The \textit{average spectral $\mathcal{F}$-invariant} of $L$ is defined as the average of $\displaystyle \mathcal{F}(\operatorname{spec}(L))$, namely :
\begin{equation*}\displaystyle
   \operatorname{avg}_{\mathcal{F}}(L) := \displaystyle \overline{ \mathcal{F}(\operatorname{spec}(L))} 
 = \frac{1}{|\operatorname{spec}(L)|} \sum_{\ell \in \operatorname{spec}(L)} 
    \mathcal{F}(\ell).
\end{equation*} 
\label{def-new-F-inv-linkoids}
\end{definition}

\noindent The invariance of the average spectral $\mathcal{F}$-invariant of a linkoid is given by the following theorem:

\begin{theorem}
Let $L$ be a linkoid, $\operatorname{spec}(L)$ be its virtual spectrum and let $\mathcal{F}$ be an invariant of virtual links. Let $\displaystyle \mathcal{F}(\operatorname{spec}(L))$ be the collection of values of $\mathcal{F}$ over all virtual links in $\operatorname{spec}(L)$. Then, the set, $\displaystyle \mathcal{F}(\operatorname{spec}(L))$ and the measure, $  \operatorname{avg}_{\mathcal{F}}(L)$, are invariants of the linkoid, $L$. 
\label{thm-new-F-inv}
\end{theorem}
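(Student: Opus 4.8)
The plan is to reduce the entire statement to the invariance of the virtual spectrum, which is already established in Theorem \ref{thm-spec-is-inv} and Corollary \ref{corr-equiv-diag-spec}. Both objects in question, the collection $\mathcal{F}(\operatorname{spec}(L))$ and the scalar $\operatorname{avg}_{\mathcal{F}}(L)$, are built canonically out of $\operatorname{spec}(L)$: the former by applying $\mathcal{F}$ elementwise, the latter by averaging those values. Since $\operatorname{spec}(L)$ depends only on the equivalence class of $L$, anything functorially derived from it will inherit invariance, so no genuinely new topological input is needed.

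First I would fix two diagrams $L_1$ and $L_2$ representing the same linkoid and invoke Corollary \ref{corr-equiv-diag-spec} to conclude $\operatorname{spec}(L_1) = \operatorname{spec}(L_2)$ as sets of virtual link types. Next I would note that $\mathcal{F}$, being an invariant of virtual links, is a well-defined function on the ambient set of virtual link classes; in particular it assigns a single value to each element of $\operatorname{spec}(L)$. Applying $\mathcal{F}$ elementwise therefore yields a collection $\mathcal{F}(\operatorname{spec}(L))$ that is completely determined by the set $\operatorname{spec}(L)$. Combining this with the equality of the two spectra gives $\mathcal{F}(\operatorname{spec}(L_1)) = \mathcal{F}(\operatorname{spec}(L_2))$, which is the first claim.

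For the average, I would observe that $\operatorname{avg}_{\mathcal{F}}(L)$ is, by Definition \ref{def-new-F-inv-linkoids}, a function of two data attached to $\operatorname{spec}(L)$: the cardinality $|\operatorname{spec}(L)|$ and the sum $\sum_{\ell \in \operatorname{spec}(L)} \mathcal{F}(\ell)$. Both are determined by the set $\operatorname{spec}(L)$ alone, the cardinality because $\operatorname{spec}(L)$ is a set of distinct virtual link types and the sum because $\mathcal{F}$ is constant on each type. Since $\operatorname{spec}(L_1) = \operatorname{spec}(L_2)$, these two data agree for $L_1$ and $L_2$, hence so does their ratio, giving $\operatorname{avg}_{\mathcal{F}}(L_1) = \operatorname{avg}_{\mathcal{F}}(L_2)$.

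The only point requiring care, and where I would expect to spend the most attention, is the set-versus-multiset distinction in the definition of the spectrum. Because $\operatorname{spec}(L)$ is defined via set-builder notation over all $\sigma \in H_n$, distinct closure permutations that produce equivalent virtual links contribute a single element, so the average is taken over distinct virtual link \emph{types} rather than over permutations. This must be reconciled with the fact that $\mathcal{F}$ is an invariant, hence constant on each equivalence class, so that evaluating $\mathcal{F}$ on each element of the set is unambiguous and no overcounting occurs. Once this is made explicit, the invariance of both $\mathcal{F}(\operatorname{spec}(L))$ and $\operatorname{avg}_{\mathcal{F}}(L)$ follows formally from that of $\operatorname{spec}(L)$.
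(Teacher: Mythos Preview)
Your argument is correct. The route differs slightly from the paper's: you deduce everything from the invariance of $\operatorname{spec}(L)$ itself (Theorem~\ref{thm-spec-is-inv} and Corollary~\ref{corr-equiv-diag-spec}), observing that both $\mathcal{F}(\operatorname{spec}(L))$ and $\operatorname{avg}_{\mathcal{F}}(L)$ are functions of the set $\operatorname{spec}(L)$ alone. The paper instead drops one level down and invokes Theorem~\ref{thm-F-inv}, arguing that each element of $\mathcal{F}(\operatorname{spec}(L))$ is of the form $\mathcal{F}_\sigma(L)$ for some closure permutation $\sigma$, and that each $\mathcal{F}_\sigma$ is already known to be a linkoid invariant; the collection and its average then inherit invariance. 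The two arguments are essentially equivalent, since Theorem~\ref{thm-spec-is-inv} was itself proved from the invariance of each $(L,\sigma)_v$, but yours is the more economical packaging once the spectrum has been shown invariant. Your explicit discussion of the set-versus-multiset issue in Definition~\ref{def-vir-spec} is a useful clarification that the paper's proof leaves implicit.
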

\begin{proof}
 Since, $\mathcal{F}$ is an invariant of virtual links, there exits an invariant of linkoids, $\displaystyle {\mathcal{F}_\sigma}$, which is defined as $\mathcal{F}$ acting on the closure of the linkoid with respect to a closure permutation, $\sigma$ ( by Theorem \ref{thm-F-inv}). Therefore, the elements of the set, $\displaystyle \mathcal{F}(\operatorname{spec}(L))$, are of the form, $\displaystyle {\mathcal{F}_\sigma}(L)$, where $\sigma$ is a closure permutation. Each $\mathcal{F}_\sigma$ is an invariant of $L$, hence their collection, $\displaystyle \mathcal{F}(\operatorname{spec}(L))$ and the measure, $\operatorname{avg}_{\mathcal{F}}(L)$, are also invariants of $L$. 
\end{proof}

In this way, we can define the new invariants of linkoids such as, average spectral height, average spectral genus, average spectral Jones polynomial and average spectral arrow polynomial. The average spectral height and genus of linkoids are rational numbers whereas, the average spectral Jones polynomial and arrow polynomial of linkoids are polynomials with rational coefficients.

\begin{example}
For the linkoid, \raisebox{-7pt}{\includegraphics[scale=0.05]{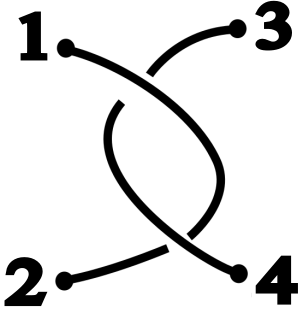}} with $\tau =(1 \quad 2)(3 \quad 4)$, the virtual spectrum is given by accounting for all $\sigma \in \{ (1 \quad 2)(3 \quad 4), (1 \quad 3)(2 \quad 4), (1 \quad 4)(2 \quad 3)\}$, i.e. 
$\operatorname{spec}\left(  \raisebox{-7pt}{\includegraphics[scale=0.05]{fig/hopf-open.png}} \right) = \left\{ \raisebox{-7pt}{ \includegraphics[scale=0.07]{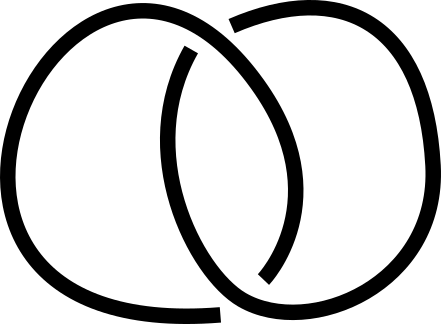},  \includegraphics[scale=0.07]{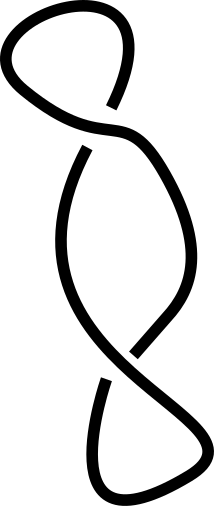},  \includegraphics[scale=0.07]{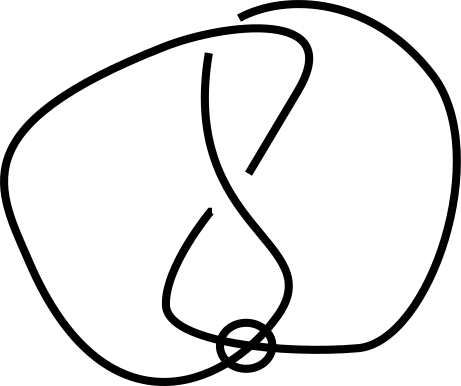}} \right\}$. Let $\mathsf{h}$ denote the height invariant of linkoids. Then, $\mathsf{h}\left(  \raisebox{-7pt}{ \includegraphics[scale=0.07]{fig/hc.png}}\right) = \mathsf{h}\left( \raisebox{-7pt}{\includegraphics[scale=0.07]{fig/hairpin.png}}\right) = 0$ and $\mathsf{h}\left(  \raisebox{-7pt}{ \includegraphics[scale=0.07]{fig/vir-tref1.png}}\right) = 1$. Hence, $\mathsf{h}\left(\operatorname{spec}\left(  \raisebox{-7pt}{\includegraphics[scale=0.05]{fig/hopf-open.png}} \right)\right) = \{ 0, 0, 1 \}$ and $\operatorname{avg}_{\mathsf{h}}\left(  \raisebox{-7pt}{\includegraphics[scale=0.05]{fig/hopf-open.png}} \right)  = 1/3$.    
\end{example}

\begin{corollary}
If two linkoid diagrams, $L_1$ and $L_2$, are equivalent up to Reidemeister moves, then $\mathcal{F}(\operatorname{spec}(L_1)) = \mathcal{F}(\operatorname{spec}(L_2))$ and $\operatorname{avg}_{\mathcal{F}}(L_1)=\operatorname{avg}_{\mathcal{F}}(L_2)$, where $\mathcal{F}$ is an invariant of virtual links.
\end{corollary}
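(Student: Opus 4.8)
The plan is to derive the statement directly from the invariance already established in Theorem \ref{thm-new-F-inv}, treating it as a formal consequence rather than as a fresh computation. First I would recall that, by Theorem \ref{thm-new-F-inv}, both the collection $\mathcal{F}(\operatorname{spec}(L))$ and the average $\operatorname{avg}_{\mathcal{F}}(L)$ are invariants of the linkoid $L$, for any invariant $\mathcal{F}$ of virtual links. The defining property of a linkoid invariant is precisely that it depends only on the equivalence class of the diagram under the Omega (Reidemeister) moves and isotopy, and not on the particular representative diagram.

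Next I would use the hypothesis that $L_1$ and $L_2$ are equivalent up to Reidemeister moves, which says exactly that they are two diagrams of one and the same linkoid. Applying the invariance from the first step to this common linkoid then yields both equalities simultaneously, namely $\mathcal{F}(\operatorname{spec}(L_1)) = \mathcal{F}(\operatorname{spec}(L_2))$ and $\operatorname{avg}_{\mathcal{F}}(L_1) = \operatorname{avg}_{\mathcal{F}}(L_2)$, with no further work required.

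If one prefers a more explicit route, I would instead argue through the spectrum itself. By Corollary \ref{corr-equiv-diag-spec}, equivalent linkoid diagrams have identical virtual spectra, so that $\operatorname{spec}(L_1) = \operatorname{spec}(L_2)$ as sets of virtual links; the underlying reason is Proposition \ref{prop-L-sig-iso}, which guarantees that virtual closure with respect to any fixed closure permutation $\sigma$ sends equivalent linkoid diagrams to equivalent virtual links. Since $\mathcal{F}$ is a well-defined function on virtual-link types, evaluating it on the two equal spectra produces the same collection of values, and hence the same average. I do not expect any genuine obstacle here: the only point requiring care is confirming that the spectrum construction is well-defined on equivalence classes even though it involves a labeling of the endpoints and a choice of closure permutation, and this bookkeeping is already dispatched by the cited results. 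The argument is therefore purely a matter of assembling Theorem \ref{thm-new-F-inv}, Corollary \ref{corr-equiv-diag-spec}, and Proposition \ref{prop-L-sig-iso}.
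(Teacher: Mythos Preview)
Your proposal is correct and matches the paper's own proof, which simply cites Corollary \ref{corr-equiv-diag-spec} and Theorem \ref{thm-new-F-inv}. Your two routes (direct invariance via Theorem \ref{thm-new-F-inv}, or equality of spectra via Corollary \ref{corr-equiv-diag-spec} followed by application of $\mathcal{F}$) are exactly the content of those two citations.
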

\begin{proof}
    It follows from Corollary \ref{corr-equiv-diag-spec} and Theorem \ref{thm-new-F-inv}.
\end{proof}

\begin{corollary}
  Let $L_1$ and $L_2$ be two arbitrary linkoid diagrams and $\mathcal{F}$ be an invariant of virtual links.
  If $\mathcal{F}(\operatorname{spec}(L_1)) \neq \mathcal{F}(\operatorname{spec}(L_2))$, then  $L_1$ and $L_2$ are distinct. Similarly, if $\operatorname{avg}_{\mathcal{F}}(L_1) \neq \operatorname{avg}_{\mathcal{F}}(L_2)$, then  $L_1$ and $L_2$ are distinct.
\end{corollary}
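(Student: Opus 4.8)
The plan is to prove both implications by contraposition, reducing the statement entirely to invariance results already established. The key observation is that ``$L_1$ and $L_2$ are distinct'' means precisely that they are not equivalent up to Reidemeister moves, so the contrapositive of each implication asserts that if $L_1$ and $L_2$ \emph{are} equivalent, then the relevant quantity agrees on them. These are exactly the conclusions of the immediately preceding corollary, which in turn rests on Theorem \ref{thm-new-F-inv}.

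Concretely, for the first implication I would argue as follows. Suppose, toward the contrapositive, that $L_1$ and $L_2$ are not distinct, i.e.\ they are equivalent up to Reidemeister moves. By the preceding corollary (an application of Theorem \ref{thm-new-F-inv}), equivalence of the diagrams forces $\mathcal{F}(\operatorname{spec}(L_1)) = \mathcal{F}(\operatorname{spec}(L_2))$. Contrapositively, if $\mathcal{F}(\operatorname{spec}(L_1)) \neq \mathcal{F}(\operatorname{spec}(L_2))$, then $L_1$ and $L_2$ cannot be equivalent, hence they are distinct. The second implication is identical in structure: equivalence of $L_1$ and $L_2$ gives $\operatorname{avg}_{\mathcal{F}}(L_1) = \operatorname{avg}_{\mathcal{F}}(L_2)$ by the same corollary, so inequality of the averages witnesses distinctness.

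I would note explicitly why this is legitimate: Theorem \ref{thm-new-F-inv} establishes that both the value set $\mathcal{F}(\operatorname{spec}(L))$ and the scalar $\operatorname{avg}_{\mathcal{F}}(L)$ are genuine linkoid invariants, meaning they depend only on the equivalence class of $L$ and not on the chosen diagram. An invariant, by definition, takes equal values on equivalent inputs, which is precisely the ingredient needed for the contrapositive. No further structural facts about the virtual spectrum or about $\mathcal{F}$ are required.

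There is essentially no obstacle in this argument; the entire mathematical content has already been absorbed into Theorem \ref{thm-new-F-inv} and the corollary preceding this statement. The only point demanding any care is purely logical rather than mathematical, namely correctly forming the contrapositive and recording that ``distinct'' is the negation of ``equivalent up to Reidemeister moves.'' Accordingly, the proof will be a short two-line deduction citing Corollary (the one asserting equality of $\mathcal{F}(\operatorname{spec}(\cdot))$ and $\operatorname{avg}_{\mathcal{F}}(\cdot)$ for equivalent diagrams) together with Theorem \ref{thm-new-F-inv}.
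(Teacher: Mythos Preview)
Your proposal is correct and takes essentially the same approach as the paper: the paper's proof consists of the single line ``It follows from Theorem \ref{thm-new-F-inv},'' and your contrapositive argument simply spells out why that theorem (invariance of $\mathcal{F}(\operatorname{spec}(\cdot))$ and $\operatorname{avg}_{\mathcal{F}}(\cdot)$) immediately yields the result. Your additional citation of the preceding corollary is harmless but unnecessary, since that corollary is itself just a restatement of the invariance content of Theorem \ref{thm-new-F-inv}.
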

\begin{proof}  It follows from Theorem \ref{thm-new-F-inv}.
\end{proof}

\begin{corollary}
    The $\mathcal{F}$-invariant of a linkoid over its virtual spectrum is stronger than the invariant $\mathcal{F}$ applied to any individual closure unless the invariant is a complete classifier of virtual knots/links. Similarly, the average $\mathcal{F}$-invariant, $\operatorname{avg}_{\mathcal{F}}$, is a stronger classifier of a linkoid  than the invariant $\mathcal{F}$ applied to any individual closure, unless the invariant is a complete classifier of virtual knots/links.
\end{corollary}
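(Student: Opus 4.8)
The plan is to make the word \emph{stronger} precise as a statement about distinguishing power and then to argue it in two halves: an \emph{at least as strong} part and a \emph{strictly stronger} part, the latter carrying the completeness caveat. Throughout I would use that, by Theorem~\ref{thm-F-inv}, for each fixed closure permutation $\sigma$ the single-closure assignment $L \mapsto \mathcal{F}_\sigma(L) = \mathcal{F}((L_\tau,\sigma)_v)$ is an invariant of linkoids, while by Definition~\ref{def-vir-spec} and Theorem~\ref{thm-new-F-inv} both $L \mapsto \mathcal{F}(\operatorname{spec}(L))$ and $L \mapsto \operatorname{avg}_{\mathcal{F}}(L)$ are invariants. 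I will call an invariant $I$ \emph{at least as strong} as an invariant $J$ if $I(L_1)=I(L_2)$ forces $J(L_1)=J(L_2)$ for all linkoids $L_1,L_2$ with the same number of components, and \emph{strictly stronger} if, in addition, some pair with $J(L_1)=J(L_2)$ has $I(L_1)\ne I(L_2)$.

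For the \emph{at least as strong} part I would argue by containment. Fix $\sigma \in H_n$. Since $(L_\tau,\sigma)_v \in \operatorname{spec}(L)$ by Definition~\ref{def-vir-spec}, the value $\mathcal{F}_\sigma(L)=\mathcal{F}((L_\tau,\sigma)_v)$ is exactly the $\sigma$-indexed entry of the family $\{\mathcal{F}((L_\tau,\sigma')_v)\}_{\sigma'\in H_n}$ out of which $\mathcal{F}(\operatorname{spec}(L))$ is assembled. Hence recording the spectral collection \emph{together with its indexing by} $H_n$ determines every $\mathcal{F}_\sigma(L)$, so $\mathcal{F}(\operatorname{spec}(\cdot))$ is at least as strong as each single-closure invariant $\mathcal{F}_\sigma$. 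I expect the cleanest formulation to keep the $\sigma$-indexing explicit here; the set-theoretic spectrum of Definition~\ref{def-vir-spec} forgets this indexing, and reconciling the two is a point I would address carefully.

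The \emph{strictly stronger} part I would establish by exhibiting a closure-dependent separation. Taking $\mathcal{F}=\mathsf{h}$ (height) and $n=2$, I would compare the open Hopf linkoid, whose spectrum realizes heights $\{0,0,1\}$ as computed above, with the trivial two-component linkoid, whose spectrum has size $2$ and realizes only height $0$ by the remark that a trivial $n$-component linkoid has spectrum of size $n$. For the strand closure $\sigma=\tau=(1\ 2)(3\ 4)$ both close to classical links, so $\mathsf{h}_\sigma$ returns $0$ on each and fails to separate them, whereas their spectra, and hence their average heights $1/3$ and $0$, differ. This yields a pair separated by the spectral and average invariants but not by the fixed single closure, giving strictness. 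The existence of such separations in general follows from property~(iv) of the virtual closure map, namely $|\operatorname{Im}(\varphi|_{L,\tau})|>1$ for a multi-component linkoid, together with the closure-dependence illustrated in Figure~\ref{closures1}, where distinct closure permutations of one linkoid yield inequivalent virtual links.

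The main obstacle, and where I would spend the most care, is the clause \emph{unless $\mathcal{F}$ is a complete classifier}. I would handle it by contraposition: the strict improvement fails only if, for all linkoids, agreement of $\mathcal{F}$ on one closure forces agreement on every closure, so that varying $\sigma$ contributes no new distinction. When $\mathcal{F}$ is a complete classifier of virtual links, $\mathcal{F}_\sigma(L)$ recovers the full virtual-isotopy type of $(L_\tau,\sigma)_v$, so the entire distinguishing content of the spectrum is topological rather than an artefact of $\mathcal{F}$; in this regime the comparison degenerates to $\operatorname{spec}(L)$ versus $(L_\tau,\sigma)_v$ and the $\mathcal{F}$-refinement adds nothing. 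For the average I would note that $\operatorname{avg}_{\mathcal{F}}$ is a derived summary of the spectral collection, so it inherits strictness on the explicit examples above; I would be candid that, as a lossy average, it need not dominate a fixed single closure on every pair, and that pinning down the precise sense in which the averaged invariant is ``stronger'' is the delicate point of the argument.
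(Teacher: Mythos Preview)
The paper's own proof is the single sentence ``It follows from Theorem~\ref{thm-new-F-inv}.'' There is no further argument: no precise definition of \emph{stronger}, no treatment of the completeness caveat, and no separate handling of the average. Your proposal therefore goes well beyond what the paper actually does.

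Your approach is genuinely different in that you attempt to make the statement rigorous before proving it, and in doing so you surface real issues the paper simply elides. Two of your flagged concerns are legitimate and not addressed in the paper: first, the spectrum in Definition~\ref{def-vir-spec} is an unindexed set, so equality of $\mathcal{F}(\operatorname{spec}(L_1))$ and $\mathcal{F}(\operatorname{spec}(L_2))$ as sets does \emph{not} a priori force $\mathcal{F}_\sigma(L_1)=\mathcal{F}_\sigma(L_2)$ for each fixed $\sigma$, so the ``at least as strong'' direction needs the indexed family rather than the set; second, $\operatorname{avg}_{\mathcal{F}}$ is a lossy summary and need not dominate any fixed $\mathcal{F}_\sigma$ on all pairs. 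Your explicit height example for strictness and your contrapositive reading of the completeness clause are reasonable ways to give the corollary actual content. In short, your proof is more careful than the paper's; the paper treats the corollary as an informal remark rather than a theorem with a sharp statement, and your hesitations about the average and the unindexed spectrum are exactly the places where the informal reading and a precise one diverge.
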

\begin{proof}  It follows from Theorem \ref{thm-new-F-inv}.
\end{proof}

Through the following example, it is shown that evaluating an invariant over the virtual spectrum can better tell apart linkoids compared to evaluating the invariant for any particular closure permutation. 

\begin{example}
 Let us consider the linkoids, $\displaystyle  L_1 = \raisebox{-10pt}{\includegraphics[scale=0.2]{fig/kishino-linkoid.png}}$ and $\displaystyle  L_2 = \raisebox{-10pt}{\includegraphics[scale=0.4]{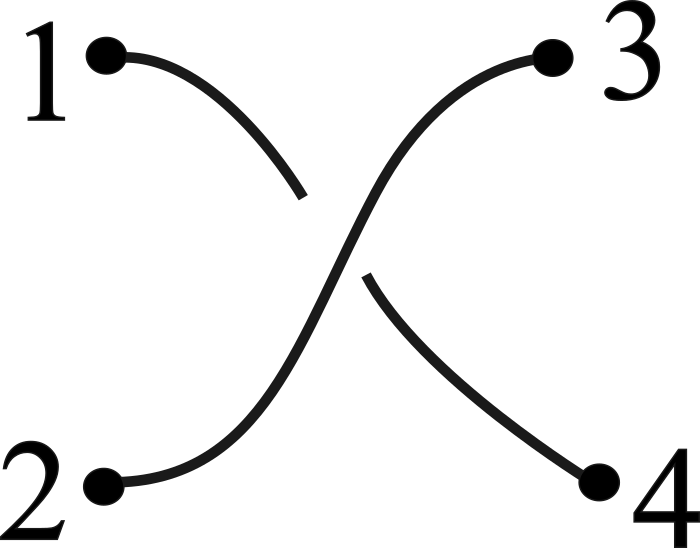}}$, each with strand permutation $\tau=(1 \quad 4)(2 \quad 3)$. Note that the Jones polynomial of both $L_1$ and $L_2$, with respect to closure permutation, $\sigma=(1 \quad 2)(3 \quad 4)$, is equal to $1$. However, the spectrum of the Jones polynomial  of $L_1$ over all closure permutations is $\{ 1,\quad 2-2A^{-2}-A^2-A^{-4},\quad -2A^4+2A^{-2}+2-2A^2-2A^4\}$. Whereas,  the spectrum of the Jones polynomial  of $L_2$ over all closure permutations is $\{ 1,\quad -A^4-A^2,\quad 1\}$. Since the spectrum of the Jones polynomial for the two linkoids are different, hence $L_1$ and $L_2$ are non-equivalent as linkoids.
\end{example}

In addition to the average spectral $\mathcal{F}$-invariant of a linkoid, the \textit{minimal spectral $\mathcal{F}$-invariant of a linkoid} can be defined as follows for any $\mathcal{F}$-invariant of virtual links, which takes values in $\mathbb{R}$. 

\begin{definition}(\textit{Minimal spectral $\mathcal{F}$-invariant of a linkoid}) Let $\mathcal{F}$ be an invariant of virtual links which takes values in $\mathbb{R}$ (such as height, genus and odd writhe). The \textit{minimal spectral $\mathcal{F}$-invariant} of a linkoid, $L$, is defined as the minimum of the values of the $\mathcal{F}$-invariant over all elements in the virtual spectrum, $\operatorname{spec}(L)$:
\begin{equation*}\displaystyle
    \operatorname{min}_{\mathcal{F}}(L) := \operatorname{min} \left\{\mathcal{F}(\ell) : \ell \in \operatorname{spec}(L)\right\} .
\end{equation*}
\label{def-min-spec-F}
\end{definition}

\begin{remark}
    The minimal spectral height and genus of a linkoid can be defined due to Definition \ref{def-min-spec-F} and they take values in $\mathbb{Z}$. Whereas, the average spectral height and genus of a linkoid takes values in $\mathbb{Q}$.
\end{remark}

\section{Measures of entanglement of Open Curves in 3-space via Virtual Closure}
\label{sec-open-curves}

In this section, it is shown how topological invariants from virtual knot theory can be extended to define measures of entanglement of open curves in 3-space. 

For a collection of open or closed curves in 3-space in general position, any (regular) projection of these curves gives rise to a linkoid diagram. In fact, any such projection will be generic with probability one. Based on the directions of projection, given as unit vectors in $S^2$, it is possible to get non-equivalent linkoid diagrams for a collection of open curves in 3-space. 
To fully capture the entanglement complexity of a collection of open curves in 3-space, it is necessary to account for all possible directions of projection \cite{Barkataki2022, Panagiotou2021,
 Panagiotou2020b, Panagiotou2015}.

In the following discussion, let $\mathcal{L}$ be a collection of $n$ open curves in 3-space, with endpoints denoted by labels from the set $E=\{1, 2, \cdots, 2n\}$, without repetition. A \textit{strand permutation} and a \textit{closure permutation} of  $\mathcal{L}$ can be defined in the same way as Definitions \ref{def-link-perm} and \ref{def_EP}, namely:

\begin{definition}\label{def-open-link-perm}(\textit{Strand permutation of collections of open curves in 3-space})  The \textit{strand permutation} of $\mathcal{L}$ is defined to be the element, $\tau \in S_{2n}$, such that, for any $i\in E$,  $i$ and $\tau(i)$ are labels for the two endpoints of the same component of $\mathcal{L}$.
\end{definition}

\begin{definition}(\textit{Closure permutation of collections of open curves in 3-space}) A \textit{closure permutation} of $\mathcal{L}_\tau$, where $\tau$ denotes a strand permutation of $\mathcal{L}$, is any element,  $\sigma \in S_{2n}$,  such that, $\sigma^2 = \operatorname{id}$ and $\sigma(i) \neq i$, for all $i \in E$.
\label{def_open-cp}
\end{definition}

Notice that, for every $\vec{\xi} \in S^2$, the projection of $\mathcal{L}$ to the plane with normal vector, $\vec{\xi}$ is a linkoid diagram, $\mathcal{L}_{\vec{\xi}}$.
A choice of strand permutation and closure permutation of a collection of open curves, $\mathcal{L}$, canonically induces a strand permutation and closure permutation on a projection, $\mathcal{L}_{\vec{\xi}}$, for all $\vec{\xi} \in S^2$. Therefore, $\displaystyle \left(\mathcal{L}_{\vec{\xi}}\right)_\tau$ and $\displaystyle \left( \left(\mathcal{L}_{\vec{\xi}}\right)_\tau , \sigma\right)_v$ are well-defined for all $\vec{\xi} \in S^2$, where $\tau$ and $\sigma$ are the given strand permutation and closure permutation of $\mathcal{L}$. The notion of virtual closure can be extended to open curves in 3-space by introducing virtual closure arcs in 3-space according to some closure permutation (See Figure \ref{open_v}), as defined before.

\begin{figure}[ht!]
    \centering
    \includegraphics[scale=0.35]{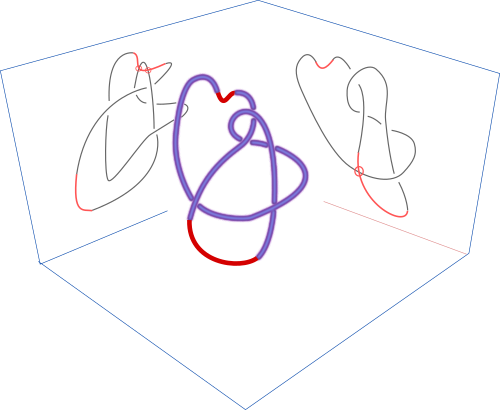}
    \caption{Projections of the virtual closure of open curves in 3-space giving rise to virtual knot diagrams.}
    \label{open_v}
\end{figure}

\begin{definition}(\textit{Virtual arc in 3-space})
A virtual arc in 3-space is an arc in 3-space such that its projection in any direction involves only virtual crossings.    
\end{definition}

\begin{definition}(\textit{Virtual closure of collections of open curves in 3-space with respect to $\sigma$}) Let $\mathcal{L}$ be a collection of open curves in 3-space with strand permutation, $\tau$. The \textit{virtual closure} of $\mathcal{L}_\tau$, with respect to $\sigma$, is denoted by $\displaystyle \left(\mathcal{L}_\tau, \sigma \right)_v$ and it is defined to be the knot/link in 3-space, with 
virtual closure arcs between $i$ and $\sigma(i)$, for all $i\in E$.  
\label{def-vclo-open}
\end{definition}

\begin{corollary}\label{corr-proj-vk-vk3} Let $\mathcal{L}$ denote a collection of open curves in 3-space and let $\tau$ and $\sigma$ denote its strand permutation and closure permutation, respectively.
Let $\vec{\xi} \in S^2$ and let $\displaystyle \left(\left(\mathcal{L}_\tau,\sigma\right)_v\right)_{\vec{\xi}}$ be the projection, with respect to $\vec{\xi}$, of the virtual closure of $\mathcal{L}_\tau$, with respect to $\sigma$ and $\displaystyle \left( \left(\mathcal{L}_{\vec{\xi}}\right)_\tau , \sigma\right)_v$ be the virtual closure of the linkoid diagram, $\displaystyle \left(\mathcal{L}_{\vec{\xi}}\right)_\tau$, with respect to $\sigma$. Then $\displaystyle \left(\left({\mathcal{L}_\tau},\sigma\right)_v\right)_{\vec{\xi}}= \displaystyle \left( \left(\mathcal{L}_{\vec{\xi}}\right)_\tau , \sigma\right)_v$. 
\begin{proof}
By Definition \ref{def-vclo-open}, all the virtual crossings in $\displaystyle \left(\left({\mathcal{L}_\tau},\sigma\right)_v\right)_{\vec{\xi}}$ are contributed by the projections of the virtual closure arcs in $\displaystyle \left({\mathcal{L}_\tau},\sigma\right)_v$. The linkoid diagram,  $\displaystyle \left(\mathcal{L}_{\vec{\xi}}\right)_\tau$, is obtained if the arcs between $i$ and $\sigma(i)$ are removed from the diagram, $\left(\left({\mathcal{L}_\tau},\sigma\right)_v\right)_{\vec{\xi}}$, for all $i \in \{1,2, \ldots, 2n\}$. Hence, $\displaystyle \left(\left({\mathcal{L}_\tau},\sigma\right)_v\right)_{\vec{\xi}}= \displaystyle \left( \left(\mathcal{L}_{\vec{\xi}}\right)_\tau , \sigma\right)_v$.
\end{proof}
\end{corollary}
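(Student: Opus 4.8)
The plan is to prove the identity at the level of diagrams and then invoke the invariance of the virtual closure under the immersion class of its arcs. First I would unpack the left-hand diagram $\left(\left(\mathcal{L}_\tau,\sigma\right)_v\right)_{\vec{\xi}}$. By Definition \ref{def-vclo-open}, the object $\left(\mathcal{L}_\tau,\sigma\right)_v$ in $3$-space consists of the original curves $\mathcal{L}$ together with virtual closure arcs joining the endpoint $i$ to the endpoint $\sigma(i)$ for every $i \in E$. Projecting this whole configuration along $\vec{\xi}$ therefore produces a planar diagram with two kinds of strands: the images of the arcs of $\mathcal{L}$, and the images of the virtual closure arcs. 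I would record that the projection of $\mathcal{L}$ alone is, by definition, the linkoid diagram $\mathcal{L}_{\vec{\xi}}$, and that the canonically induced strand permutation makes this $\left(\mathcal{L}_{\vec{\xi}}\right)_\tau$.

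The central observation is that, because each closure arc is a \emph{virtual arc in 3-space}, its projection in the direction $\vec{\xi}$ produces only virtual crossings; in particular every crossing between a projected closure arc and the image of $\mathcal{L}$, as well as every crossing among the projected closure arcs themselves, is virtual. Hence deleting the projected closure arcs from $\left(\left(\mathcal{L}_\tau,\sigma\right)_v\right)_{\vec{\xi}}$ removes no classical information and returns exactly the linkoid diagram $\left(\mathcal{L}_{\vec{\xi}}\right)_\tau$. Conversely, the retained projected closure arcs connect the projected endpoint $i$ to the projected endpoint $\sigma(i)$ and contribute only virtual crossings, so they constitute a system of virtual closure arcs for $\left(\mathcal{L}_{\vec{\xi}}\right)_\tau$ with respect to $\sigma$ in the exact sense of Definition \ref{def-vclo-open}. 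This identifies the left-hand diagram as a virtual closure of the projected linkoid with respect to $\sigma$, which is a representative of $\left(\left(\mathcal{L}_{\vec{\xi}}\right)_\tau,\sigma\right)_v$.

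The last step is to argue that this particular representative coincides, as a virtual knot/link, with the right-hand side. The projected closure arcs produced above need not be the most natural choice of connecting arcs one would draw for $\left(\mathcal{L}_{\vec{\xi}}\right)_\tau$; they may have a different immersion class. However, by Proposition \ref{rem-clos1} the virtual closure is independent of the immersion class of the closure arcs, any two such systems being related by detour moves. Therefore the virtual knot/link represented by the left-hand diagram equals $\left(\left(\mathcal{L}_{\vec{\xi}}\right)_\tau,\sigma\right)_v$, which is the desired equality.

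I expect the main obstacle to be making rigorous the claim that the projected closure arcs introduce only virtual crossings and do not secretly alter the classical crossing data of $\left(\mathcal{L}_{\vec{\xi}}\right)_\tau$, i.e.\ that projection and deletion of arcs genuinely commute. This rests entirely on the defining property of a virtual arc in $3$-space, namely that \emph{every} direction of projection yields only virtual crossings along it, so I would state that property explicitly and use it to rule out any classical crossing on a projected closure arc. Once that is secured, the identification of the two diagrams and the appeal to Proposition \ref{rem-clos1} are routine.
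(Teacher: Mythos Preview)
Your proposal is correct and follows essentially the same approach as the paper: both arguments observe that the projected closure arcs contribute only virtual crossings (by the defining property of a virtual arc in $3$-space), so that deleting them from the projected diagram recovers $\left(\mathcal{L}_{\vec{\xi}}\right)_\tau$, whence the projected diagram is a virtual closure of that linkoid with respect to $\sigma$. Your write-up is somewhat more careful than the paper's in that you explicitly invoke Proposition~\ref{rem-clos1} to pass from ``a'' virtual closure diagram to ``the'' virtual knot/link $\left(\left(\mathcal{L}_{\vec{\xi}}\right)_\tau,\sigma\right)_v$; the paper leaves this step implicit.
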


The following corollary asserts that the virtual closure of collection of open curves in 3-space is independent of the embedding of the virtual arcs in 3-space.

\begin{corollary}
Let $\mathcal{L}$ denote a collection of open curves in 3-space and let $\tau$ and $\sigma$ denote its strand permutation and closure permutation, respectively. Let  $X$ and $Y$ denote two representations of $\displaystyle \left(\mathcal{L}_\tau, \sigma \right)_v$, corresponding to different embeddings of the virtual arcs. Then, $X_{\vec{\xi}} = Y_{\vec{\xi}}$ for all $\vec{\xi} \in S^2$.
\end{corollary}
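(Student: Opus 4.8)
The plan is to reduce this three-dimensional statement to the two-dimensional immersion-class independence already proved in Proposition \ref{rem-clos1}, using the fact (Corollary \ref{corr-proj-vk-vk3}) that projection commutes with virtual closure. First I would fix an arbitrary direction $\vec{\xi} \in S^2$ and record that $X$ and $Y$ contain exactly the same open curves $\mathcal{L}$; by hypothesis they differ only in the embeddings of the virtual closure arcs joining $i$ to $\sigma(i)$. Consequently, deleting from either $X_{\vec{\xi}}$ or $Y_{\vec{\xi}}$ the projections of the closure arcs leaves in both cases the identical linkoid diagram $\left(\mathcal{L}_{\vec{\xi}}\right)_\tau$, since that diagram depends only on the shared curves $\mathcal{L}$ and on $\vec{\xi}$.

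Next I would invoke the definition of a virtual arc in 3-space: each closure arc projects, in \emph{every} direction, to a planar arc carrying only virtual crossings. Therefore the projected closure arcs of both $X$ and $Y$ are legitimate virtual closure arcs of $\left(\mathcal{L}_{\vec{\xi}}\right)_\tau$ relative to the same closure permutation $\sigma$. This is precisely the content of Corollary \ref{corr-proj-vk-vk3} applied to each representation separately, and its conclusion is insensitive to how the arcs are embedded. Hence both $X_{\vec{\xi}}$ and $Y_{\vec{\xi}}$ are diagrams of the virtual closure of one and the same linkoid diagram $\left(\mathcal{L}_{\vec{\xi}}\right)_\tau$ with respect to one and the same $\sigma$, the only possible discrepancy being the immersion class of their closure arcs.

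The final step is to appeal to Proposition \ref{rem-clos1}, which asserts exactly that the virtual closure $\left(L_\tau, \sigma\right)_v$ is independent of the immersion class of its closure arcs: one such diagram is carried to the other by a sequence of detour moves on the closure arcs, and these moves preserve the underlying virtual knot/link. It follows that $X_{\vec{\xi}}$ and $Y_{\vec{\xi}}$ represent the same virtual knot/link, i.e. $X_{\vec{\xi}} = Y_{\vec{\xi}}$. Since $\vec{\xi}$ was arbitrary, the equality holds for all $\vec{\xi} \in S^2$.

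I expect the one genuinely substantive point — and the part most worth stating carefully — to be the passage from three dimensions to two: namely, that two distinct three-dimensional embeddings of the virtual arcs cannot produce planar closure arcs differing by anything beyond immersion class (equivalently, detour moves). This is guaranteed because the only crossings the projected arcs can create, both among themselves and against $\left(\mathcal{L}_{\vec{\xi}}\right)_\tau$, are virtual, so no genuine classical crossing information can be introduced or destroyed by changing the spatial embedding; the entire discrepancy reduces to a planar rerouting of virtual arcs, which is exactly what Proposition \ref{rem-clos1} absorbs.
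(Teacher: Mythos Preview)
Your argument is correct and follows essentially the same route as the paper: the paper's proof is the single line ``It follows from Corollary \ref{corr-proj-vk-vk3},'' which is exactly your reduction---both $X_{\vec{\xi}}$ and $Y_{\vec{\xi}}$ are virtual closures of the same linkoid diagram $\left(\mathcal{L}_{\vec{\xi}}\right)_\tau$ with respect to the same $\sigma$, hence equal. You have simply unpacked that one line by making the appeal to Proposition \ref{rem-clos1} explicit, which the paper leaves implicit in the well-definedness of $\left(\left(\mathcal{L}_{\vec{\xi}}\right)_\tau,\sigma\right)_v$.
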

\begin{proof}
    It follows from Corollary \ref{corr-proj-vk-vk3}.
\end{proof}

A projection of a collection of open curves in 3-space can give only finitely many knotoids/linkoids \cite{Panagiotou2015, Barkataki2022}. Similarly, for any closure permutation, the virtual closure of a collection of open curves in 3-space can be associated with a finite family of virtual knot diagrams (which arise from projections of the virtual closure).

\begin{corollary} Let $\mathcal{L}$ be a collection of open curves in 3-space.
 Given a closure permutation, $\sigma$, the number of distinct virtual knots associated with the virtual closure of $\mathcal{L}$ is less than or equal to the number of distinct knotoids/linkoids associated with  $\mathcal{L}$.
\label{vc_lemma}
\end{corollary}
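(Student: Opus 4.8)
The plan is to realize the passage from projected linkoids to projected virtual closures as a \emph{surjection} between the two finite sets of equivalence classes in question, from which the inequality follows immediately. Let $\mathcal{S} = \{[(\mathcal{L}_{\vec{\xi}})_\tau] : \vec{\xi} \in S^2\}$ denote the (finite) set of distinct knotoids/linkoids obtained from projections of $\mathcal{L}$, and let $\mathcal{T} = \{[((\mathcal{L}_\tau,\sigma)_v)_{\vec{\xi}}] : \vec{\xi} \in S^2\}$ denote the set of distinct virtual knots obtained from projections of the virtual closure $(\mathcal{L}_\tau,\sigma)_v$. I want to produce a surjective map $\Phi : \mathcal{S} \to \mathcal{T}$, so that the counting inequality $|\mathcal{T}| \leq |\mathcal{S}|$ is automatic.

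First I would define $\Phi$ on representatives by $\Phi\bigl([(\mathcal{L}_{\vec{\xi}})_\tau]\bigr) = \bigl[((\mathcal{L}_{\vec{\xi}})_\tau, \sigma)_v\bigr]$; that is, each projected linkoid is sent to its virtual closure with respect to the fixed closure permutation $\sigma$. The essential point is that $\Phi$ is well defined on equivalence classes: because the strand permutation $\tau$ is induced canonically and identically on every projection of $\mathcal{L}$, any two directions yielding equivalent linkoids yield linkoids that are equivalent \emph{as labelled diagrams sharing the same} $\tau$, and Proposition \ref{prop-L-sig-iso} then guarantees that their virtual closures with respect to $\sigma$ are equivalent virtual knots. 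Hence the value of $\Phi$ depends only on the linkoid class, not on the particular direction $\vec{\xi}$ chosen to represent it.

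Next I would verify that the image of $\Phi$ is exactly $\mathcal{T}$. By Corollary \ref{corr-proj-vk-vk3}, for every direction $\vec{\xi}$ the projection of the $3$-space virtual closure agrees diagrammatically with the virtual closure of the projected linkoid, namely $\bigl((\mathcal{L}_\tau,\sigma)_v\bigr)_{\vec{\xi}} = \bigl((\mathcal{L}_{\vec{\xi}})_\tau, \sigma\bigr)_v$. Consequently every element of $\mathcal{T}$ has the form $\bigl[((\mathcal{L}_{\vec{\xi}})_\tau,\sigma)_v\bigr] = \Phi\bigl([(\mathcal{L}_{\vec{\xi}})_\tau]\bigr)$ for some $\vec{\xi}$, so $\Phi$ is surjective. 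A surjection between finite sets forces $|\mathcal{T}| \leq |\mathcal{S}|$, which is precisely the claimed inequality; the finiteness of $\mathcal{S}$ is already supplied by the cited fact that a collection of open curves admits only finitely many projection knotoids/linkoids, and then $\mathcal{T}$, being the image of a finite set, is finite as well.

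The only obstacle I anticipate is the well-definedness step, and it is bookkeeping rather than substance: one must take care that ``equivalent linkoids'' is interpreted as equivalent \emph{with the common strand permutation} $\tau$, since Proposition \ref{prop-L-sig-iso} requires a fixed $\tau$ on both diagrams. This is exactly what the canonical induction of $\tau$ and $\sigma$ from $\mathcal{L}$ onto each projection (recorded just before Definition \ref{def-vclo-open}) provides, so beyond invoking Corollary \ref{corr-proj-vk-vk3} for surjectivity and Proposition \ref{prop-L-sig-iso} for well-definedness no new topological input is needed.
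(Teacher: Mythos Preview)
Your proposal is correct and follows essentially the same approach as the paper: both exhibit the passage from projected linkoids to their virtual closures as a map whose image is the set of virtual links arising from projections of the $3$-space closure. Your version is in fact more precise than the paper's: the paper cites non-injectivity of the virtual closure map (Theorem \ref{surj-thm}), whereas what is actually needed---and what you correctly invoke---is well-definedness on equivalence classes (Proposition \ref{prop-L-sig-iso}) together with the commutation of closure and projection (Corollary \ref{corr-proj-vk-vk3}); non-injectivity only explains why the inequality can be strict.
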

\begin{proof}
    A projection of $\mathcal{L}$ can give only finitely many knotoids/linkoids \cite{Panagiotou2015, Barkataki2022}. Hence, a projection of the virtual closure of $\mathcal{L}$, with respect to $\sigma$, can give only finitely many virtual knots/links. The claim follows from the fact that the virtual closure map on linkoids is not injective. (See Theorem \ref{surj-thm}).
\end{proof}

\begin{remark}
 By Definition \ref{def-vclo-open}, a new notion of virtual links in 3-space arises i.e. the links in 3-space containing one or more virtual arcs. Let us denote virtual links in 3-space by $\mathcal{V}^{3D}$.  Let for every $n \in \mathbb{N}$, $\mathcal{A}_n^{3D}$ be the set of all collections of open curves in 3-space with $n$ components and $H_n = \{ \sigma \in S_{2n} : \sigma^2(i) = i, \sigma(i) \neq i , \quad \forall i \in E \}$, where $E=\{ 1, 2, \ldots , 2n\}$.  Similar to Definition \ref{def-closure-map}, we can define a closure map relating open curves in 3-space to virtual links in 3-space as follows :
 \begin{equation*}
 \begin{split}
     \phi &:  \quad \{ (\mathcal{L},\tau,\sigma)  \mid  (\mathcal{L},\tau,\sigma) \in \mathcal{A}_n^{3D} \times H_n \times H_n \text{ for some }n \in \mathbb{N}\} \longrightarrow \mathcal{V}^{3D}\\
     & \hspace{5cm} (\mathcal{L}, \tau, \sigma) \mapsto (\mathcal{L}_\tau,\sigma)_v \hspace{0.5cm},   \\
     \end{split}
 \end{equation*}
where $\tau, \sigma \in H_n$, $\mathcal{L} \in \mathcal{A}_n^{3D}$ and $(\mathcal{L}_\tau,\sigma)_v$ is the virtual closure of $\mathcal{L}_\tau$ with respect to $\sigma$. The map $\phi$ has properties analogous to properties i-iv of the virtual closure map $\varphi$ of linkoids (See Defintion \ref{def-closure-map}). 
\end{remark}

\begin{remark}
Through virtual closure, open curves in 3-space can be associated to a collection of embeddings of links in thickened surfaces of arbitrary genus.
\end{remark}

A collection of new measures of entanglement of collections of open curves in 3-space can be defined by using invariants of virtual knots as follows :

\begin{definition}(\textit{New measures of entanglement of collections of open curves in 3-space}) Let $\mathcal{L}$ be a collection of open curves in 3-space and let  $\mathcal{F}$ be any invariant of virtual links.
 Given a closure permutation $\sigma$, a measure of entanglement of $\mathcal{L}_\tau$ is,
\begin{equation}\displaystyle
\mathcal{F}_\sigma({\mathcal{L}_\tau})\hspace{0.05cm} :=\hspace{0.05cm}\frac{1}{4 \pi} \int_{\vec{\xi}\in S^2} \mathcal{F}\left(\left(\mathcal{L}_\tau, \sigma \right)_{\vec{\xi}}\right) dS\hspace{0.05cm}.
    \label{inv_open_curves}
\end{equation}
 \noindent where $\left(\mathcal{L}_\tau, \sigma \right)_{\vec{\xi}}$ denotes the projection along $\vec{\xi}$ of the virtual closure of $\mathcal{L}_\tau$, with respect to $\sigma$. 
\label{thm-inv-open-curves}
\end{definition}

Note that the integral in Equation \ref{inv_open_curves} is taken over all vectors $\vec{\xi} \in S^2$ except a set of measure zero (corresponding to the irregular projections). For collections of open curves in 3-space,  $\mathcal{F}_\sigma$ has the following properties :

\begin{enumerate}
    \item $\mathcal{F}_\sigma$ does not depend on any particular projection of the collection of open or closed curves.
    \item For a collection of open curves,  $\mathcal{F}_\sigma$ is not an invariant of a corresponding/approximating link, linkoid or virtual link.
    \item $\mathcal{F}_\sigma$ is not a topological invariant, but it is a continuous function of the curve coordinates  (see Corollary \ref{corr-tends-to-coincide}).
    \item For a collection of closed curves in 3-space (a link), $\mathcal{F}_\sigma$ coincides with a link invariant and it can be computed from a single projection, i.e.  $\mathcal{F}_\sigma\left( \mathcal{L}_\tau\right)=\mathcal{F}_\sigma\left( \left(\mathcal{L}_{\vec{\xi}}\right)_\tau\right) = \mathcal{F}\left(\left(\mathcal{L}_\tau, \sigma \right)_{\vec{\xi}}\right)$, where $\vec{\xi} \in S^2$ is any projection vector.
    \item As the endpoints of a collection of open curves in 3-space tend to coincide, according to $\sigma$, the value of $\mathcal{F}_\sigma$ tends to the value of $\mathcal{F}$ on the corresponding classical link in 3-space (see Corollary \ref{corr-tends-to-coincide}).
\end{enumerate}

\begin{corollary}\label{corr-tends-to-coincide} Let $\mathcal{L}$ denote a collection of open curves in 3-space. Then, $\mathcal{F}_\sigma$ is a continuous function of the curve coordinates of $\mathcal{L}_\tau$. As the endpoints of $\mathcal{L}_\tau$ tend to coincide according to $\sigma$, to form a closed link in 3-space, $\mathcal{F}_\sigma$ tends to the value of $\mathcal{F}$ on the corresponding classical link in 3-space.
\end{corollary}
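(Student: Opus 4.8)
The plan is to prove the two assertions — continuity and the coincidence limit — separately, reducing both to the structure of the averaging integral in Definition \ref{thm-inv-open-curves} together with Corollary \ref{corr-proj-vk-vk3}.

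For continuity, I would first use Corollary \ref{corr-proj-vk-vk3} to rewrite the integrand as $\mathcal{F}\left(\left(\mathcal{L}_\tau,\sigma\right)_{\vec{\xi}}\right) = \mathcal{F}\left(\left(\left(\mathcal{L}_{\vec{\xi}}\right)_\tau,\sigma\right)_v\right)$, so that its value is determined by the virtual closure of the \emph{projected} linkoid $\left(\mathcal{L}_{\vec{\xi}}\right)_\tau$. Since a projection of $\mathcal{L}$ realizes only finitely many linkoid types \cite{Panagiotou2015, Barkataki2022}, the sphere $S^2$ of projection directions decomposes into finitely many regions $R_1,\ldots,R_k$ on each of which the projected diagram — and hence its virtual closure and the value of $\mathcal{F}$ — is constant; the common boundaries of these regions lie in the measure-zero set of irregular directions $\vec{\xi}$ (those for which the projection is non-generic, e.g.\ a multiple point, a self-tangency, or an endpoint projecting onto a strand). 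Writing $v_i$ for the constant value of $\mathcal{F}$ on $R_i$, the integral becomes the finite sum $\mathcal{F}_\sigma(\mathcal{L}_\tau) = \frac{1}{4\pi}\sum_{i=1}^{k} v_i\,\mu(R_i)$, where $\mu$ is the area measure on $S^2$. The critical directions bounding the $R_i$ are cut out by equations depending continuously on the coordinates of the curves and their closure arcs, so each area $\mu(R_i)$ is a continuous function of those coordinates; a small perturbation of $\mathcal{L}$ merely redistributes measure continuously among regions carrying the finitely many values $v_i$. Hence $\mathcal{F}_\sigma$ is continuous in the curve coordinates.

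For the coincidence limit, I would argue that as the endpoints are brought together according to $\sigma$ (endpoint $i$ approaching $\sigma(i)$), the virtual closure arcs of Definition \ref{def-vclo-open} may be taken to shrink to arbitrarily short arcs joining coincident endpoints. For all but a measure-zero set of directions, a sufficiently short closure arc projects without creating any virtual crossing, so the projected virtual closure $\left(\mathcal{L}_\tau,\sigma\right)_{\vec{\xi}}$ becomes a genuine classical diagram of the closed link $\mathcal{C}$ obtained by joining the open curves according to $\sigma$. By property 4 of $\mathcal{F}_\sigma$, for the closed link $\mathcal{C}$ the integrand equals the constant $\mathcal{F}(\mathcal{C})$ almost everywhere, so the average is $\mathcal{F}(\mathcal{C})$. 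Combining this with the continuity established above, the limiting value of $\mathcal{F}_\sigma(\mathcal{L}_\tau)$ as the endpoints coincide is precisely $\mathcal{F}_\sigma$ evaluated at the closed configuration, namely $\mathcal{F}(\mathcal{C})$.

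The main obstacle is the continuity step: making rigorous that each area $\mu(R_i)$ varies continuously with the curve coordinates. This requires controlling how the network of irregular directions on $S^2$ moves under perturbation and verifying that no region abruptly acquires or loses positive measure. This is exactly the phenomenon handled in the averaged-invariant framework of \cite{Panagiotou2020b, Barkataki2022}, to which I would appeal for the measure-theoretic details.
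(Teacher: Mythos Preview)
Your proposal is correct and follows essentially the same approach as the paper: decompose $S^2$ into finitely many regions on which the projected linkoid type is constant, express $\mathcal{F}_\sigma$ as a finite weighted sum over these types with weights equal to the spherical areas (geometric probabilities), appeal to \cite{Panagiotou2020b, Barkataki2022} for continuity of those areas in the curve coordinates, and for the limit observe that the measure of directions yielding a non-link-type projection tends to zero as the endpoints coincide. The only cosmetic difference is that the paper passes through a polygonal approximation $\mathcal{L}^{(n)}$ before invoking the great-circle area argument from \cite{Panagiotou2020b}, whereas you work directly with the curves; the underlying idea is identical.
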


\begin{proof}
Let us approximate $\mathcal{L}$ by a set of polygonal curves of $n$ edges each, namely $\mathcal{L}^{(n)}$. Then, 
\begin{equation}\displaystyle
\mathcal{F}_\sigma\left({\mathcal{L}^{(n)}}_\tau\right) = \sum_{i=1}^k p_i \mathcal{F}_\sigma\left({\mathsf{L}^{(n)}_i}_\tau\right)  ,
    \label{finte_sum_Fsig}
\end{equation}

\noindent where ${\mathsf{L}^{(n)}_i}_\tau$ for $i=1, \dotsc,k$ are the possible linkoids that can occur in all projections of ${\mathcal{L}^{(n)}}_\tau$ and $p_i$, the corresponding geometric probabilities. 
The geometric probability $p_i$ can be expressed as,
$\displaystyle p_i = \frac{2A_0}{4\pi}$,
where $A_0$ is the area on the sphere corresponding to unit vectors such that the projection of ${\mathcal{L}^{(n)}}_\tau$ along such vectors results in the linkoid ${\mathsf{L}^{(n)}_i}_\tau$. $A_0$ is the area of the quadrangle bounded by great circles defined by the edges and vertices of the polygonal curves in ${\mathcal{L}^{(n)}}_\tau$, which is a continuous function of the coordinates of ${\mathcal{L}^{(n)}}_\tau$ (see proof of Lemma 3.1 in \cite{Panagiotou2020b}). The result follows as $n$ goes to infinity.

As the endpoints tend to coincide, the geometric probability that a projection gives a pure linkoid (or a pure virtual link) goes to $0$, while the geometric probability that it gives the link-type linkoid goes to $1$. Thus, the value of $\mathcal{F}_\sigma$ on $\mathcal{L}_\tau$ tends to the value of $\mathcal{F}$ on the corresponding classical link in 3-space.
\end{proof}

For example, the \textit{height}, \textit{genus} and \textit{odd-writhe} for a collection of open curves in 3-space can be defined as follows :

\begin{definition}\textit{(Height of a collection of open curves in 3-space with respect to $\sigma$)} Let $\mathcal{L}$ denote a collection of open curves in 3-space. Given a closure permutation $\sigma$, the \textit{height} of a collection of open curves in 3-space, $\mathcal{L}_\tau$, is defined as,
\begin{equation}\displaystyle
h_\sigma({\mathcal{L}_\tau})\hspace{0.05cm} :=\hspace{0.05cm}\frac{1}{4 \pi} \int_{\vec{\xi}\in S^2} h\left(\left(\mathcal{L}_\tau, \sigma \right)_{\vec{\xi}}\right) dS\hspace{0.05cm}.
    \label{hgt_open_curves}
\end{equation}
 \noindent where $\left(\mathcal{L}_\tau, \sigma \right)_{\vec{\xi}}$ denotes a projection of the virtual closure of $\mathcal{L}_\tau$ with respect to $\sigma$. 
\end{definition}

\begin{definition}(\textit{Genus of  a collection of open curves in 3-space  with respect to $\sigma$} ) Let $\mathcal{L}$ denote a collection of open curves in 3-space.
  Given a closure permutation $\sigma$, the \textit{genus} of a collection of open curves in 3-space, $\mathcal{L}_\tau$, is defined as,
\begin{equation}\displaystyle
g_\sigma({\mathcal{L}_\tau})\hspace{0.05cm} :=\hspace{0.05cm}\frac{1}{4 \pi} \int_{\vec{\xi}\in S^2} g\left(\left(\mathcal{L}_\tau, \sigma \right)_{\vec{\xi}}\right) dS\hspace{0.05cm}.
    \label{genus_open_curves}
\end{equation}
 \noindent where $\left(\mathcal{L}_\tau, \sigma \right)_{\vec{\xi}}$ denotes a projection of the virtual closure of $\mathcal{L}_\tau$ with respect to $\sigma$. 
\end{definition}

\begin{definition}(\textit{Odd writhe of a collection of open curves in 3-space  with respect to $\sigma$})
Let $\mathcal{L}$ denote a collection of open curves in 3-space and let $\sigma$ be a closure permutation such that, $(\mathcal{L}, \sigma)_v$ has only one component. Then, the \textit{odd-writhe} of a collection of open curves in 3-space, is defined as,
\begin{equation}\displaystyle
\mathsf{owr}_\sigma({\mathcal{L}_\tau})\hspace{0.05cm} :=\hspace{0.05cm}\frac{1}{4 \pi} \int_{\vec{\xi}\in S^2} \mathsf{owr}\left(\left(\mathcal{L}_\tau, \sigma \right)_{\vec{\xi}}\right) dS\hspace{0.05cm}.
    \label{odd-wr_open_curves}
\end{equation}
\noindent where $\left(\mathcal{L}_\tau, \sigma \right)_{\vec{\xi}}$ denotes a projection of the virtual closure of $\mathcal{L}_\tau$ with respect to $\sigma$.
\end{definition}

Similarly, we can define the Jones polynomial and the arrow polynomial of collections of open curves in 3-space with respect to a given closure permutation. The affine-index polynomial of collections of open curves in 3-space is also well-defined provided the closure permutation in concern gives rise to a virtual link in 3-space with only 1 component.

Notice that for open curves in 3-space, the height, genus and odd-writhe are real numbers that are continuous functions of the curve coordinates and they tend to the integer topological invariant as the endpoints coincide. The Jones polynomial, arrow polynomial and affine-index polynomial for open curves in 3-space have real valued coefficients which are continuous functions of the curves co-ordinates. As the endpoints of the curves tend to coincide, these polynomials tend to the corresponding polynomial-type invariants of the resultant closed knot/link in 3-space.

\begin{remark}

 The virtual closure of open curves in 3-space can be useful in the context of physical systems that employ periodic boundary conditions \cite{Morton2009, Fukuda2023, Barkataki2023pbc, Evans2013}. The generating cell, $C$, of a PBC system is in fact an $n$-tangle, $T$, possibly involving endpoints inside the tangle. Here $n \in \mathbb{N}$ determines the number of arcs inside $C$ and the periodic conditions canonically determine a strand permutation, $\tau_{pbc}$, for $T$. The virtual closure of $T$, denoted $(T_{\tau_{pbc}}, \tau_{pbc})$, can be formed by introducing closure arcs to the endpoints in $T$, lying on the boundary surface of the $C$, with respect to $\tau_{pbc}$. In any projection of $(T_{\tau_{pbc}}, \tau_{pbc})$, the arcs inside $C$ contributes classical crossings to the diagram whereas the arcs (virtual closure arcs) lying outside $C$ contributes virtual crossings to the diagram (See Figures \ref{per_vc} and \ref{per_vc-2pbc}). Thus, for any projection of a PBC box, we can assign a virtual link. The virtual link however, depends on the direction of projection of the box.
\begin{figure}[ht!]
    \centering
    \raisebox{-20pt}{\includegraphics[scale=0.55]{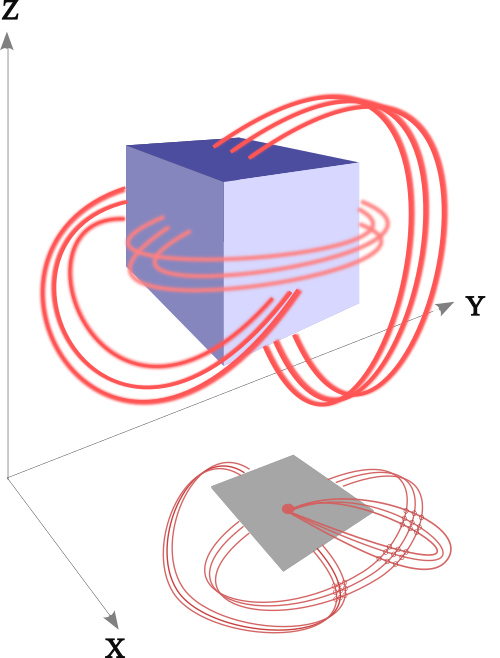}} \hspace{1cm}
    \includegraphics[scale=0.5]{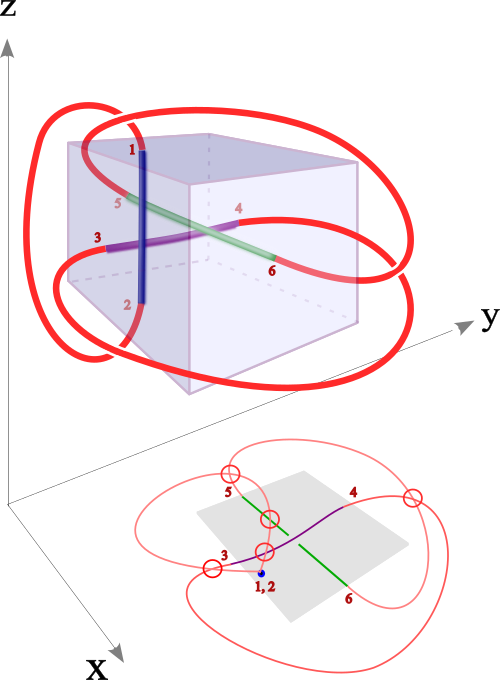} 
    \caption{Virtual closure of the arcs in a base cell of a PBC system in 3D and one of its projections (virtual knot diagram).}
    \label{per_vc}
\end{figure}
By Theorem \ref{thm-inv-open-curves}, novel measures of entanglement for systems employing PBC  can be defined in terms of the virtual closure (respecting PBC) of the arcs within a base cell.  

\begin{figure}[ht!]
    \centering
    \includegraphics[scale=0.3]{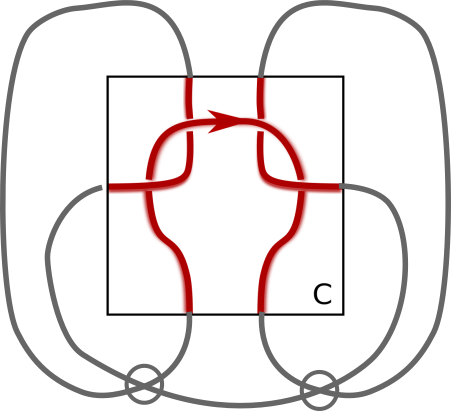} \hspace{2cm} \includegraphics[scale=0.3]{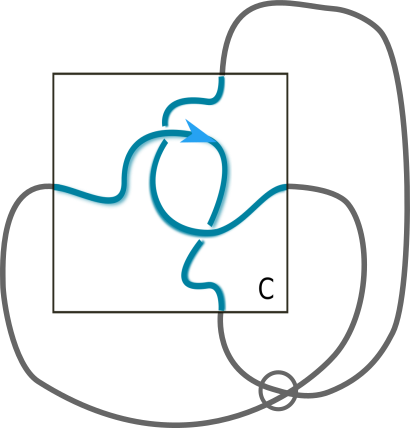}
    \caption{The virtual closure of the generating cell of two different systems employing 2 periodic boundary conditions.}
    \label{per_vc-2pbc}
\end{figure}
\end{remark}

\subsection{The Virtual Spectrum of collections of Open Curves in 3-space}

In this section, the virtual spectrum and the weighted virtual spectrum of collections of open curves in 3-space is discussed to define a new measure of entanglement of open curves in 3-space that is independent of any virtual closure.

\begin{definition}(\textit{Virtual Spectrum of open curves in 3-space})
Let  $\mathcal{L}$ be a collection of open curves in 3-space with $n$ components and strand permutation, $\tau$ on the set of endpoints, $E$. The \textit{virtual spectrum} of  $\mathcal{L}_\tau$ is defined to be the set, $\displaystyle \operatorname{spec}(\mathcal{L}_\tau) := \displaystyle \left\{(\mathcal{L}_\tau, \sigma)_v : \sigma \in H_n \right\}$, where  $H_n$ is the set of all closure permutations on $E$.
\end{definition}

\begin{definition}(\textit{Weighted Virtual Spectrum of open curves in 3-space})
Let  $\mathcal{L}$ be a collection of open curves in 3-space with $n$ components and strand permutation, $\tau$ on the set of endpoints, $E$. The \textit{weighted virtual spectrum} of  $\mathcal{L}_\tau$ is defined to be the set,

$\displaystyle \operatorname{spec}(\mathcal{L}_\tau) := \displaystyle \left\{\left(w_\sigma,(\mathcal{L}_\tau, \sigma)_v\right) : \sigma \in H_n ,\quad w_\sigma = \frac{1}{2n}\sum_{i\in E}\operatorname{d}(i,\sigma(i))\right\}$, where  $H_n$ is the set of all closure permutations on $E$ and $\operatorname{d}(i,\sigma(i))$ denotes the  Euclidean distance between the points $i$ and $\sigma(i)$ for all $i \in E$.
\end{definition}

Using the weighted virtual spectrum, we define a new measure of  entanglement for collections of open curves in 3-space:

\begin{definition}(\textit{Spectral measure of open curves in 3-space})
Let  $\mathcal{L}$ be a collection of open curves in 3-space with $n$ components and strand permutation, $\tau$ on the set of endpoints, $E$. Let $\mathcal{F}$ be an invariant of virtual knots/links. The $\mathcal{F}$-spectral measure of $\mathcal{L}_\tau$ is defined as:

$$\displaystyle \mathcal{F}(\mathcal{L}_\tau) :=\sum_{\sigma \in H_n} \frac{w_{min}}{w_\sigma} \mathcal{F}_\sigma\left(({\mathcal{L}}_\tau, \sigma)_v\right) , $$
 where  $\displaystyle w_\sigma = \frac{1}{2n}\sum_{i\in E}\operatorname{d}(i,\sigma(i))$ and $w_{min} = \operatorname{min}\{ w_\sigma \mid \sigma \in H_n \}$.
\end{definition}

Let $\mathcal{F}$ be an invariant of virtual knots/links. For collections of open curves in 3-space, the $\mathcal{F}$-spectral measure has the following properties :

\begin{enumerate}
    \item It is independent of any choice of projection of the collection of open curves in 3-space.
    \item It is not an invariant of a corresponding/approximating classical or virtual link.
    \item It is not a topological invariant, but it is a continuous function of the curve coordinates.
    \item As the endpoints of a collection of open curves in 3-space tend to coincide, according to a particular $\sigma$, its $\mathcal{F}$-spectral measure tends to the $\mathcal{F}$ invariant of the corresponding classical link in 3-space (see Theorem \ref{thm-vko-tends}).
\end{enumerate}

In particular, we can define the spectral height, the spectral genus, the spectral Jones polynomial and the spectral arrow polynomial for collections of open curves in 3-space. The spectral height and spectral genus take values in $\mathbb{R}$ whereas  the spectral Jones polynomial and the spectral arrow polynomial are polynomial with coefficients in $\mathbb{R}$.

\begin{theorem}\label{thm-vko-tends}
Let  $\mathcal{L}$ be a collection of open curves in 3-space with $n$ components and strand permutation, $\tau$ on the set of endpoints, $E$. Let $\mathcal{F}$ be an invariant of virtual links.  As the endpoints tend to coincide according to a closure permutation, $\sigma$, the $\mathcal{F}$-spectral measure tends to the $\mathcal{F}$-invariant of the resultant closed link in 3-space.
\end{theorem}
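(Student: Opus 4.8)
The plan is to use the weighting factors $w_{min}/w_\sigma$ as a selection mechanism that isolates, in the limit, the single term of the spectral sum corresponding to the closure permutation along which the endpoints merge, and then to invoke Corollary \ref{corr-tends-to-coincide} to identify that surviving term.

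Let $\sigma_0 \in H_n$ denote the closure permutation according to which the endpoints are brought together, so that in the limit $\mathcal{L}$ closes up into a classical link $\mathcal{C}$ in 3-space. First I would determine the asymptotics of the weights. Because the endpoints paired by $\sigma_0$ coincide, $\operatorname{d}(i,\sigma_0(i)) \to 0$ for every $i \in E$, and hence $w_{\sigma_0} \to 0$. For any other $\sigma \neq \sigma_0$ I would show that $w_\sigma$ is bounded below by a positive constant: each of the $n$ pairs matched by $\sigma_0$ collapses to a single limit point, these $n$ points are distinct in general position, and any $\sigma$ differing from $\sigma_0$ must match at least one endpoint of one $\sigma_0$-pair with an endpoint of a different $\sigma_0$-pair, so at least one summand $\operatorname{d}(i,\sigma(i))$ stays bounded away from $0$ by the minimal separation of the limit points. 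It follows that for all configurations sufficiently close to the limit $w_{min} = w_{\sigma_0}$, whence $w_{min}/w_{\sigma_0} = 1$ while $w_{min}/w_\sigma = w_{\sigma_0}/w_\sigma \to 0$ for every $\sigma \neq \sigma_0$.

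Next I would bound the remaining factors. By Corollary \ref{corr-tends-to-coincide} each $\mathcal{F}_\sigma$ is a continuous function of the curve coordinates and therefore stays bounded as the endpoints merge; the same corollary identifies the distinguished factor as $\mathcal{F}_{\sigma_0}(\mathcal{L}_\tau) \to \mathcal{F}(\mathcal{C})$. Substituting these two facts into the defining sum, every term with $\sigma \neq \sigma_0$ is the product of a coefficient tending to $0$ with a bounded factor and hence vanishes in the limit, while the $\sigma = \sigma_0$ term tends to $1 \cdot \mathcal{F}(\mathcal{C})$. Thus the $\mathcal{F}$-spectral measure converges to $\mathcal{F}(\mathcal{C})$, the value of $\mathcal{F}$ on the resultant closed link, which is exactly the assertion.

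The main obstacle will be the uniform lower bound on $w_\sigma$ for $\sigma \neq \sigma_0$: it rests on the distinctness of the $n$ collapse points (a general-position hypothesis that should be stated carefully) together with the combinatorial observation that no closure permutation other than $\sigma_0$ can pair every endpoint within a coinciding $\sigma_0$-pair. Once this separation of weights is in hand, the conclusion is a routine limit of a finite sum, drawing only on the continuity and limit behavior already supplied by Corollary \ref{corr-tends-to-coincide}.
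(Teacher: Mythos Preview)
Your proposal is correct and follows essentially the same approach as the paper: the paper's proof simply observes that $w_{min}\to 0$ so that $\mathcal{F}_{\sigma_0}$ is the only surviving summand, then invokes Corollary~\ref{corr-tends-to-coincide}. Your version supplies the details the paper leaves implicit---in particular the lower bound on $w_\sigma$ for $\sigma\neq\sigma_0$ and the resulting identification $w_{min}=w_{\sigma_0}$---and correctly flags the general-position assumption on the limit points that underlies this separation.
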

\begin{proof}
    As endpoints tend to coincide, $w_{min} \rightarrow 0$ and $\mathcal{F}_\sigma\left(({\mathcal{L}}_\tau, \sigma)_v\right)$ is the only surviving term in $\mathcal{F}(\mathcal{L}_\tau)$. The claim follows from Corollary \ref{corr-tends-to-coincide}.
\end{proof}

\section{Conclusions}
\label{sec-conc}
  In this paper, new invariants of linkoids are introduced via a mapping from linkoids to virtual links. This leads to a new collection of strong invariants of linkoids that are independent of any given virtual closure and to novel measures of entanglement of open curves in 3-space. These measures are continuous functions of the curve coordinates and tend to the corresponding classical invariants when the endpoints of the curves tend to coincide. 
  
  The mapping from linkoids to virtual links gives us access to many ideas and invariants of virtual links that we can now apply to studying linkoids and open curves in 3-space. Thus the program we follow here gives new applications of virtual knot theory to classical and physical knotting.

  The association of virtual knots to linkoids and open curves in three space can be interpreted as a way to construct embedded curves in thickened surfaces that reflect these topological structures. We have concentrated on making new invariants of linkoids and open curves, but the geometry of the association with curves in thickened surfaces remains to be more fully investigated in the future.

\section{Acknowledgements}
Kasturi Barkataki and Eleni Panagiotou were supported by NSF DMS-1913180 and NSF CAREER 2047587.

\bibliographystyle{vancouver}
\bibliography{paperDatabase}

\begin{thebibliography}{10}

\bibitem{Fukuda2023}
Fukuda M, Kotani M, Mahmoudi S.
\newblock Classification of doubly periodic untwisted (p,q)-weaves by their crossing number and matrices.
\newblock J Knot Theory Ramif. 2023;32(5):1.

\bibitem{Liu2018}
Liu Y, O'Keeffe M, Treacy M, Yaghi O.
\newblock The geometry of periodic knots, polycatenanes and weaving from a chemical perspective: a library for reticular chemistry.
\newblock Chem Soc Rev. 2018;47:4642-64.

\bibitem{Edwards1967}
Edwards SF.
\newblock Statistical mechanics with topological constraints: I.
\newblock Proc Phys Soc. 1967;91:513-9.

\bibitem{Panagiotou2020b}
Panagiotou E, Kauffman L.
\newblock Knot polynomials of open and closed curves.
\newblock Proc R Soc A. 2020;476:20200124.

\bibitem{Panagiotou2021}
Panagiotou E, Kauffman L.
\newblock Vassiliev measures of complexity for open and closed curves in 3-space.
\newblock Proc R Soc A. 2021;477:20210440.

\bibitem{Barkataki2022}
Barkataki K, Panagiotou E.
\newblock The {J}ones polynomial of collections of open curves in 3-space.
\newblock Proceedings of the Royal Society A: Mathematical, Physical and Engineering Sciences. Nov 2022;vol. 478, no. 2267.

\bibitem{Jones1985}
Jones VFR.
\newblock A polynomial invariant of knots via von {Neumann} algebras.
\newblock Bull Am Math Soc. 1985;12:103-12.

\bibitem{Freyd1985}
Freyd P, Yetter D, Hoste J, Lickorish W, Millett KC, Ocneanu A.
\newblock A new polynomial invariant for knots and links.
\newblock Bull Am Math Soc. 1985;12:239-46.

\bibitem{Jones1987}
Jones VFR.
\newblock Hecke algebra representations of braid groups and link polynomials.
\newblock Annals of Mathematics. 1987;126:335-88.

\bibitem{Kauffman1987}
Kauffman LH.
\newblock State models and the {Jones} polynomial.
\newblock Topology. 1987;26:395-407.

\bibitem{Kauffman1990}
Kauffman LH.
\newblock An invariant of regular isotopy.
\newblock Trans Amer Math Soc. 1990;318:417-71.

\bibitem{Przytycki1987}
Przytycki J, Traczyk P.
\newblock Conway algebras and skein equivalence of links.
\newblock Proc Amer Math Soc. 1987;100:744-8.

\bibitem{Turaev2012}
Turaev V.
\newblock Knotoids.
\newblock Osaka, J Math. 2012;49:195-223.

\bibitem{Gugumcu2017}
G\"ug\"umcu N, Kauffman LH.
\newblock New invariants of knotoids.
\newblock Eur J Comb. 2017;65:186-229.

\bibitem{Kauffmann1999}
Kauffman LH.
\newblock Virtual {Knot Theory}.
\newblock Europ J Combinatorics. 1999;20:663-91.

\bibitem{Gauss1877}
Gauss KF.
\newblock Werke.
\newblock Kgl. Gesellsch. Wiss. G\"ottingen; 1877.

\bibitem{Carter2000}
Carter JS, Kamada S, Saito M.
\newblock Stable equivalence of Knots on surfaces and Virtual Knot cobordisms.
\newblock Journal of Knot Theory and Its Ramifications. 2000;11:311-22.

\bibitem{manturov2012virtual}
Manturov VO.
\newblock Virtual crossing numbers for virtual knots.
\newblock Journal of Knot Theory and Its Ramifications. 2012;21(13):1240009.

\bibitem{Kauffman2006virtual}
Kauffman LH, Lambropoulou S.
\newblock Virtual braids and the {L}-move.
\newblock Journal of Knot Theory and Its Ramifications. 2006;15(06):773-811.

\bibitem{satoh2012crossing}
Satoh S, Tomiyama Y.
\newblock On the crossing numbers of a virtual knot.
\newblock Proceedings of the American Mathematical Society. 2012;140(1):367-76.

\bibitem{Kuperberg2003}
Kuperberg G.
\newblock {What is a virtual link?}
\newblock Algebraic \& Geometric Topology. 2003;3(1):587  591.
\newblock Available from: \url{https://doi.org/10.2140/agt.2003.3.587}.

\bibitem{dye2009virtual}
Dye H, Kauffman LH.
\newblock Virtual crossing number and the arrow polynomial.
\newblock Journal of Knot Theory and Its Ramifications. 2009;18(10):1335-57.

\bibitem{Gugumcu2017b}
G\"ug\"umcu N, Lambropoulou S.
\newblock Knotoids, braidoids and applications.
\newblock Symmetry. 2017;9:315.

\bibitem{Gugumcu2021parity}
G{\"u}g{\"u}mc{\"u} N, Kauffman LH.
\newblock Parity, virtual closure and minimality of knotoids.
\newblock Journal of Knot Theory and Its Ramifications. 2021;30(11):2150076.

\bibitem{Manouras2021}
Manouras M, Lambropoulou S, Kauffman LH.
\newblock Finite type invariants for knotoids.
\newblock European Journal of Combinatorics. 2021 dec;98:103402.
\newblock Available from: \url{https://doi.org/10.1016%2Fj.ejc.2021.103402}.

\bibitem{Kauffman2013affine}
Kauffman LH.
\newblock An affine index polynomial invariant of virtual knots.
\newblock Journal of Knot Theory and Its Ramifications. 2013;22(04):1340007.

\bibitem{Kauffman2004b}
Kauffman LH.
\newblock A self-linking invariant of virtual knots.
\newblock Fund Math. 2004;184:135-58.
\newblock Available from: \url{https://doi.org/10.4064/fm184-0-10}.

\bibitem{kaestner2012}
Kaestner AM, Kauffman LH.
\newblock Parity, skein polynomials and categorification.
\newblock Journal of Knot Theory and Its Ramifications. 2012;21(13):1240011.

\bibitem{Panagiotou2015}
Panagiotou E.
\newblock The linking number in systems with periodic boundary conditions.
\newblock J Comput Phys. 2015;300:533-73.

\bibitem{Morton2009}
Morton HR, Grishanov S.
\newblock Doubly periodic textile structures.
\newblock J Knot Theory Ramif. 2009;18:1597-622.

\bibitem{Barkataki2023pbc}
Barkataki K, Panagiotou E.
\newblock The {J}ones polynomial in systems with Periodic Boundary Conditions.
\newblock arXiv 230914572. 2023.

\bibitem{Evans2013}
Evans ME, Robins V, Hyde ST.
\newblock Periodic entanglement {II}: weavings from hyperbolic line patterns.
\newblock Acta Chryst. 2013;A69:262-75.

\end{thebibliography}

\end{document}